\newtheorem{theorem}{Theorem}[section]
\newtheorem{lemma}[theorem]{Lemma}
\newtheorem{proposition}[theorem]{Proposition}
\newtheorem{problem}[theorem]{Problem}
\theoremstyle{definition}
\newtheorem{definition}[theorem]{Definition}
\newtheorem*{remark}{Remark}
\newtheorem*{notation}{Notation}
\numberwithin{equation}{section}
\numberwithin{table}{section}
\newcommand{\magma}{\textsc{Magma}}
\renewcommand{\a}{\alpha}
\newcommand{\Aut}{\mathrm{Aut}}
\newcommand{\Cay}{\mathrm{Cay}}
\begin{document}
	
	\title{The existence of $m$-Haar graphical representations}
	
	\author{
		Jia-Li Du\thanks{Corresponding author, School of Mathematical Sciences, Nanjing Normal University, Nanjing, 210023, P.R. China}
			\thanks{ Ministry of Education Key Laboratory of NSLSCS, Nanjing, 210023, P.R. China},
		Yan-Quan Feng\thanks{School of Mathematics and Statistics, Beijing Jiaotong University, Beijing 100044, China},
		Binzhou Xia\thanks{School of Mathematics and Statistics, The University of Melbourne, Parkville, VIC 3010, Australia},
		Da-Wei Yang\thanks{School of Mathematical Sciences, Beijing University of Posts and Telecommunications, Beijing, 100876, P.R. China}
		\thanks{Key Laboratory of Mathematics and Information Networks (BUPT), Ministry of Education, Beijing, 100876, P.R. China}
	}
	
	\date{}
	\maketitle
	
	\begin{abstract}
		Extending the well-studied concept of graphical regular representations to bipartite graphs, a Haar graphical representation (HGR) of a group $G$ is a bipartite graph whose automorphism group is isomorphic to $G$ and acts semiregularly with the orbits giving the bipartition. The question of which groups admit an HGR was inspired by a closely related question of Est\'elyi and Pisanski in 2016, as well as Babai's work in 1980 on poset representations, and has been recently solved by Morris and Spiga. In this paper, we introduce the $m$-Haar graphical representation ($m$-HGR) as a natural generalization of HGR to $m$-partite graphs for $m\geq2$, and explore the existence of $m$-HGRs for any fixed group. This inquiry represents a more robust version of the existence problem of G$m$SRs as addressed by Du, Feng and Spiga in 2020. Our main result is a complete classification of finite groups $G$ without $m$-HGRs.
		
		\emph{Keywords}: Haar graphical representation; $m$-Cayley graph; $m$-Haar graphical representation.
		
		\emph{MSC2020}: 05C25, 20B25.
	\end{abstract}

	\section{Introduction}
	
	With a rich history of research and a wide range of variations, the GRR problem stands as a classical and well-established topic at the intersection of graph theory and group theory. A \emph{graphical regular representation} (GRR) of a group $G$ is a graph $\Gamma$ whose automorphism group $\Aut(\Gamma)$ is isomorphic to $G$ and acts regularly on the vertex set of $\Gamma$.
	The investigation into which groups have GRRs dates back to 1964, when Chao~\cite{Chao} and Sabidussi~\cite{Sabidussi1964} independently proved that there are no GRRs of finite abelian groups with exponent greater than $2$. This finding then spurred numerous papers on the classification of finite groups admitting GRRs, eventually completed by Godsil~\cite{Godsil} with significant contributions from others such as Hetzel~\cite{Hetzel}, Imrich~\cite{Imrich}, Nowitz and Watkins~\cite{NowitzWatkins1,NowitzWatkins2}.
	
	A graph $\Gamma$ is a Cayley graph of a group $G$ if and only if $\Aut(\Gamma)$ has a regular subgroup isomorphic to $G$. Thus GRRs of $G$ are precisely Cayley graphs of $G$ whose automorphism group is isomorphic to $G$.
	As a generalization, for any integer $m\geq1$, a \emph{graphical $m$-semiregular representation} (G$m$SR) of $G$ is defined in~\cite{DFS} to be an $m$-Cayley graph of $G$ whose automorphism group is isomorphic to $G$, where an $m$-Cayley graph of $G$ is by definition (see~\cite[Page~353]{AKMM2011}) a graph $\Gamma$ such that $\Aut(\Gamma)$ has a semiregular subgroup isomorphic to $G$ with $m$ orbits on the vertex set.
	G$m$SRs are also referred to as $m$-GRRs in~\cite{HKM}, where it is shown that every finite group admitting a GRR also admits a G$m$SR for $m\geq2$. This result is strengthened to assert the existence of regular (every vertex having the same valency) G$m$SRs by Du, Feng and Spiga~\cite{DFS}, who were then able to prove the following theorem based on the classification of finite groups admitting GRRs.
	
	\begin{theorem}
		\emph{(\cite[Theorem~1.1]{DFS})} Let $G$ be a finite group and $m\geq2$ be an integer. Then $G$ has a regular G$m$SR if and only if none of the following occurs:
		\begin{enumerate}[\rm(a)]
			\item $m=2$, and $G=C_n$ with $n\in\{1,2,3,4,5\}$, $C_2^2$ or $Q_8$;
			\item $m=3$, and $G=C_n$ with $n\in\{1,2,3\}$;
			\item $m=4$, and $G=C_n$ with $n\in\{1,2\}$;
			\item $5\leq m\leq9$, and $G=1$.
		\end{enumerate}
	\end{theorem}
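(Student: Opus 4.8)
The plan is to prove both directions, with the substance lying in the construction of regular G$m$SRs for all admissible pairs $(G,m)$. Throughout I would identify a regular G$m$SR of $G$ with a choice of symmetric \emph{internal} connection sets $T_0,\dots,T_{m-1}\subseteq G\setminus\{1\}$ (with $T_i=T_i^{-1}$) together with \emph{external} connection sets $S_{ij}\subseteq G$ for $0\le i<j\le m-1$ (with $S_{ji}=S_{ij}^{-1}$), so that the associated $m$-Cayley graph $\Gamma$ has constant valency and $\Aut(\Gamma)$ equals the semiregular copy $\widehat G$ of $G$ given by right translation on each part.

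For the necessity direction I would dispose of the listed pairs by finite inspection. When $G=1$ a regular G$m$SR is precisely an asymmetric regular graph on $m$ vertices; since no such graph exists for $2\le m\le 9$ (the smallest asymmetric regular graph has $10$ vertices), this yields case (d) together with the trivial-group entries of (a)--(c). For $G=C_2$ and $m\in\{2,3,4\}$ one checks that no regular graph on $2m$ vertices has $C_2$ as its full automorphism group acting semiregularly with $m$ orbits, and the remaining small non-GRR groups ($C_3,C_4,C_5,C_2^2,Q_8$ for $m=2$ and $C_3$ for $m=3$) are eliminated the same way, either by hand or by a short \magma\ computation over the bounded number of vertices involved.

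For the sufficiency direction I would split according to whether $G$ admits a GRR, invoking the classification of Godsil and others: the groups without GRRs are the abelian groups of exponent greater than $2$, the generalized dicyclic groups, and thirteen sporadic exceptions. When $G$ has a GRR $\Cay(G,S)$, I would place this rigid Cayley graph as the internal structure $T_0=S$ of the first part and design the external sets so that (i) the valencies $|T_i|+\sum_{j\ne i}|S_{ij}|$ are all equal, securing regularity, and (ii) the internal valencies $|T_i|$ are pairwise distinct, forcing every automorphism to fix each part setwise; rigidity of the GRR core then collapses the stabiliser of the configuration to $\widehat G$. This succeeds for all $m\ge2$ once $|G|$ is large enough to realise the required distinct internal valencies, leaving only the tiny groups $C_1$ and $C_2$, which fall among the exceptions already treated.

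The main obstacle is the case in which $G$ has no GRR, since then no single part can be rigidified by a Cayley graph and, worse, the natural inversion automorphism $x\mapsto x^{-1}$ (for abelian $G$) or its generalized-dicyclic analogue threatens to survive as a graph automorphism. Here I would exploit the extra room provided by the $\binom{m}{2}$ external connection sets: by choosing the $S_{ij}$ to be non-symmetric and pairwise incompatible across the parts, one simultaneously keeps $\Gamma$ regular, pins each part, and destroys both the part-permuting symmetries and the residual inversion symmetry, so that $\Aut(\Gamma)=\widehat G$. The delicate point is that this requires enough distinct admissible external patterns, which is available precisely when $|G|$ and $m$ are not too small; tracking exactly where the available patterns run out is what isolates the finite list of genuine exceptions, and verifying the boundary cases individually is where the bulk of the careful work resides.
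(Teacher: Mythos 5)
First, a point of scope: the paper you were given does not prove this statement. It is quoted verbatim as Theorem~1.1 of Du, Feng and Spiga \cite{DFS} and is used only as a black box (for instance, in Lemma~3.1 to settle the case $G=C_1$ via the equivalence between $m$-HGRs of the trivial group and regular G$m$SRs). So there is no in-paper proof to compare your attempt against, and your proposal can only be assessed on its own merits. Its overall architecture --- split on whether $G$ admits a GRR, invoke the GRR classification, and clear the small exceptional pairs by finite inspection --- is indeed the sensible one and broadly matches the strategy of \cite{DFS}.

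As a proof, however, it has genuine gaps. The most serious is the claim that making the internal valencies $|T_i|$ pairwise distinct ``forces every automorphism to fix each part setwise.'' An automorphism of $\Gamma$ does not see the partition into $G$-orbits, so ``number of neighbours inside one's own part'' is not an invariant it must preserve; to pin the parts you need an honest graph-theoretic invariant separating them (the isomorphism type of the induced neighbourhood, triangle counts through a vertex, common-neighbour counts, etc.), which is exactly the bookkeeping the present paper carries out at length in its Lemmas~\ref{Lem3PGSR}--\ref{Lem5PGSR} and Sections~\ref{Sec4}--\ref{Sec6} for the analogous HGR problem. Second, even granting that the parts are preserved and that the first part induces a GRR, you only conclude that $\Aut(\Gamma)$ restricted to $G_1$ is $G$; you still must show that the pointwise stabiliser of $G_1$ is trivial, i.e.\ that fixing $G_1$ pointwise propagates to all other parts. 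This requires the external sets to be designed for propagation (e.g.\ matchings, as in the $\{1\}$-edges of Figures~\ref{Fig1}--\ref{Fig11}) and is not addressed. Third, the non-GRR case comprises infinitely many groups (all abelian groups of exponent greater than $2$, all generalized dicyclic groups, and the sporadic exceptions), and ``choosing the $S_{ij}$ to be non-symmetric and pairwise incompatible'' is a hope rather than a construction: the substance of \cite{DFS} is precisely to exhibit explicit connection sets that kill the inversion (or generalized-dicyclic) automorphism while keeping the graph regular, and to track exactly where this becomes impossible, which is where the exceptional list in (a)--(d) actually comes from. Finally, the assertion that no asymmetric regular graph on at most $9$ vertices exists is correct and is what makes case (d) right, but it is stated without justification and needs either a finite verification or a citation.
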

	
	Following the approach to the above theorem in~\cite{DFS}, edges within $G$-orbits are indispensable in the machinery to control the automorphism group except for $G=C_1$. This causes the produced regular G$m$SRs to be not $m$-partite when $G\neq C_1$ and poses challenges in the construction of the designated G$m$SRs as described below.
	
	\begin{definition}\label{DefmHGR}
		For an integer $m\geq2$, a graph $\Gamma$ is called an \emph{$m$-Haar graphical representation} ($m$-HGR) of a group $G$ if $\Gamma$ is regular and $m$-partite such that $\Aut(\Gamma)$ is isomorphic to $G$ and acts semiregularly on the vertex set with orbits giving the $m$-partition.
	\end{definition}
	
	The terminology $m$-Haar graphical representation originates from the \emph{Haar graphical representation} (HGR) coined in~\cite{DFS2}.
	%(also called graphical biregular representation in~\cite{FKY} and GHRR in~\cite{FKWY}).
	Initially introduced in~\cite{HMP} in the language of voltage graphs, a \emph{Haar graph} of a group $G$ is a bipartite graph whose automorphism group has a subgroup that is isomorphic to $G$ and semiregular on the vertex set with orbits giving the bipartition. Notice that Haar graphs are necessarily regular. Thus an HGR of a group $G$, or equivalently a $2$-HGR of $G$, is precisely a Haar graph of $G$ whose automorphism group is isomorphic to $G$.
	It is also easy to see that each Haar graph on an abelian group is Cayley, whence abelian groups have no HGRs.
	In 2016, Est\'elyi and Pisanski~\cite{Estelyi} asked to determine the class $\mathcal{C}$ of finite nonabelian groups $G$ all of whose Haar graphs are Cayley.
	
	The question of Est\'elyi and Pisanski was answered for dihedral groups~\cite{Estelyi}, inner abelian groups~\cite{FKY} and nonsolvable groups~\cite{FKY}, before a complete resolution by Feng, Kov\'acs, Wang and Yang~\cite{FKWY}.
	%who proved that
	%\[
	%\mathcal{C}=\{D_6,D_8,D_{10},Q_8,Q_8\times C_2\}.
	%\]
	Noting by definition that no groups in $\mathcal{C}$ admit HGRs, the authors in~\cite{FKWY} then proposed the following problem.
	
	\begin{problem}\label{prob=1}
		Determine the finite nonabelian groups without HGRs.
	\end{problem}
	
	For an even integer $n\geq6$, denote by $D_{n}$ the dihedral group of order $n$.
	Recently, Problem~\ref{prob=1} was completely solved by Morris and Spiga~\cite{MS}, who established Theorem~\ref{theo=HGR} below.
	Their motivation partially comes from the so-called \emph{poset representation} of a group $G$, which is defined to be a poset $P$ with $\Aut(P)\cong G$. In 1980, Babai applied results on digraphical regular representations to prove that every group $G$ of order $n$ admits a poset representation of order at most $3n$~\cite[Corollary~4.3]{Babai}. This is improved to that all but finitely many $G$ admits a poset representation of order at most $2n$~\cite[Corollary~1.4]{MS}, as a consequence of Theorem~\ref{theo=HGR}.
	
	\begin{theorem}\label{theo=HGR}\emph{(\cite[Theorem~1.1]{MS})}
		A finite nonabelian group $G$ does not have a HGR if and only if $G$
		is isomorphic to $D_n$ with $n\in \{6, 8, 10, 12, 14\}$, $Q_8$, $A_4$, $Q_8\times C_2$, $\langle x,y~|~x^6=y^4=1, x^3=y^2,x^y=x^{-1}\rangle$ or $\langle x,y,z~|~x^3=y^3=z^2=[x,y]=1, x^z=x^{-1}, y^z=y^{-1}\rangle$.
	\end{theorem}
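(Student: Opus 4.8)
The plan is to model each Haar graph of $G$ as a bi-Cayley graph $\mathrm{BCay}(G,S)$ with vertex set $G\times\{0,1\}$, where $(g,0)$ is joined to $(h,1)$ exactly when $g^{-1}h\in S$ for a connection set $S\subseteq G$; the left-regular representation $\hat{G}$ acts on both coordinates and is semiregular with the two parts $G\times\{0\}$ and $G\times\{1\}$ as orbits. By Definition~\ref{DefmHGR} with $m=2$, such a graph is an HGR of $G$ precisely when $\Aut(\mathrm{BCay}(G,S))=\hat{G}$. Thus for each group $G$ not on the stated list I must exhibit one $S$ giving no automorphisms beyond $\hat{G}$, while for each listed group I must show that every $S$ forces an extra automorphism. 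Since inverting, left-translating and applying $\Aut(G)$ to $S$ produce isomorphic graphs, I may work with $S$ up to these equivalences and assume the graph is connected.

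The generic direction, that all but finitely many $G$ admit an HGR, I would establish by a counting argument in the Babai--Godsil style. The basic tool is the elementary estimate that a nontrivial permutation $\tau$ of $G$ moving $s$ points fixes at most $2^{\,n-s/2}$ subsets of $G$, where $n=|G|$. I would then classify the possible $\alpha\in\Aut(\mathrm{BCay}(G,S))\setminus\hat{G}$ according to whether $\alpha$ preserves or swaps the two parts, and whether or not $\alpha$ normalises $\hat{G}$. An $\alpha$ normalising $\hat{G}$ is of affine type: after composing with a translation it fixes $(1,0)$ and acts on the two copies through a $\sigma\in\Aut(G)$, the adjacency condition forcing an equation of the shape $\sigma(S)=Sa^{-1}$, respectively one relating $S$ to $S^{-1}$ for a part-swap. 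Applying the subset-fixing estimate to the associated nontrivial permutation of $G$ and summing over the at most $2\,|\Aut(G)|\,|G|$ affine candidates bounds the number of bad sets $S$ by roughly $|\Aut(G)|\,|G|\,2^{\,n-s_0/2}$, where $s_0$ is a uniform lower bound for the support of the relevant permutations. This is smaller than the total $2^{n}$ of all connection sets once $n$ exceeds an explicit threshold, so a good $S$ exists.

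The main obstacle is the analysis of extra automorphisms that do \emph{not} normalise $\hat{G}$, together with non-affine part-swaps: here there is no single permutation of one copy of $G$ to which the subset-fixing lemma applies, so the generic counting fails outright. I would deal with these by a structural argument showing that such an automorphism forces $\Aut(\mathrm{BCay}(G,S))$ to contain a transitive, indeed biquasiprimitive-type, overgroup of $\hat{G}$ on each part, and then bound the number of graphs invariant under such a large group separately, using fixity and regular-orbit estimates for the induced group on $G$; equivalently one can invoke the existing analysis of when Haar graphs of $G$ fail to be determined by their bi-Cayley structure. Securing a uniform lower bound $s_0$ on the minimal degree of the nontrivial $\Aut(G)$-images and of the relevant coset permutations is the quantitatively delicate point, since groups with an exceptionally large automorphism group are exactly those that must be excluded.

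Finally, the counting yields an explicit bound $N$ of modest size, consistent with every exceptional group having order at most $24$, and for $|G|\le N$ I would complete the classification computationally in \magma: for each nonabelian group of order at most $N$, run over connection sets up to the equivalences above and test whether some $\mathrm{BCay}(G,S)$ has automorphism group exactly $\hat{G}$, collecting those groups for which the search fails and matching them against the stated list. As a consistency check, this list must contain the class $\mathcal{C}$ of Feng, Kov\'acs, Wang and Yang, whose members have no HGR because all their Haar graphs are Cayley, hence vertex-transitive with automorphism group properly larger than $\hat{G}$, while the remaining entries such as $A_4$, $Q_8\times C_2$ and the groups of orders $18$ and $24$ are precisely those outside $\mathcal{C}$ for which every non-Cayley Haar graph nevertheless retains a colour-preserving or colour-swapping symmetry. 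Verifying both directions on these finitely many groups closes the proof.
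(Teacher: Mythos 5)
First, a point of comparison: the paper you are working from does not prove this statement at all --- Theorem~\ref{theo=HGR} is quoted verbatim from Morris and Spiga~\cite{MS} --- so there is no internal proof to measure your argument against. Judged on its own terms, your proposal is a programme rather than a proof, and it is vaguest at exactly the two points where such arguments are known to be hard.

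The first gap is the treatment of automorphisms that do not normalise $\hat{G}$. Your union bound over affine candidates $(\sigma,a)$ only excludes extra automorphisms lying in the normaliser of $\hat{G}$ in $\Aut(\mathrm{BCay}(G,S))$; an arbitrary overgroup of $\hat{G}$ need not normalise it, and the assertion that such an overgroup must be of ``biquasiprimitive type'' and can be bounded by ``fixity and regular-orbit estimates'' is precisely the content that would have to be proved --- this is the step that makes the Babai--Godsil conjecture for GRRs a substantial theorem rather than a one-page count, and the analogue here cannot simply be ``invoked'' from the literature on the class $\mathcal{C}$, since non-Cayley Haar graphs with non-normalising extra automorphisms also occur. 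The second gap is quantitative. For the part-swapping case the condition on $S$ is invariance under a map of the form $x\mapsto b\,\sigma(x)^{-1}c$; the fixed-point set of such an anti-affine map is not a coset and can exceed half of $G$ (already for dihedral groups, where inversion fixes all reflections), so no uniform bound $s_0$ of order $n/2$ exists, and you would need a genuine group-theoretic lower bound on the support of every such map for nonabelian $G$, degrading controllably on the families close to the exceptional list. Without it, the claim that the threshold $N$ is ``of modest size'' --- small enough that the negative direction can be certified exhaustively for the listed groups and witnesses found for all unlisted groups of order at most $N$ --- is unsupported; for $Q_8$ the crude bound $|\Aut(G)|\,|G|\,2^{\,n-s/2}$ already exceeds $2^n$, and where it first drops below $2^n$ is not computed. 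Note also that the route actually taken in \cite{MS}, like the $m\geq3$ arguments in Sections~\ref{Sec4}--\ref{Sec6} of this paper, is constructive (explicit connection sets built from generating sets, verified by local neighbourhood analysis, plus machine search for small orders) rather than a global counting argument, so even if your counting could be completed it would be a genuinely different proof.
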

	
	The main result of this paper extends Theorem~\ref{theo=HGR} to $m$-HGRs.
	
	\begin{theorem}\label{theo=main}
		Let $G$ be a finite group and $m\geq3$ be an integer. Then $G$ has an $m$-HGR if and only if none of the following occurs:
		\begin{enumerate}[\rm(a)]
			\item $m=3$, and $G=C_1$, $C_2$, $C_3$, $C_4$, $C_5$, $C_2^2$ or $D_6$;
			\item $m=4$, and $G=C_1$, $C_2$ or $C_3$;
			\item $m=5$, and $G=C_1$ or $C_2$;
			\item $6\leq m\leq 9$, and $G=C_1$.
		\end{enumerate}
	\end{theorem}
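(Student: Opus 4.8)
The plan is to work throughout with the connection-set description of $m$-partite $m$-Cayley graphs. I model such a graph $\Gamma$ on the vertex set $\bigcup_{i=1}^{m}(G\times\{i\})$ by prescribing, for each unordered pair $\{i,j\}$, a connection set $S_{ij}\subseteq G$ with $S_{ji}=S_{ij}^{-1}$, and declaring $(x,i)\sim(y,j)$ exactly when $x^{-1}y\in S_{ij}$; the absence of within-part edges makes $\Gamma$ genuinely $m$-partite, and the left-regular action $L(G)$ of $G$ is then automatically a semiregular group of automorphisms whose orbits are the $m$ parts. Writing $A=\Aut(\Gamma)$, I first record the \emph{obvious} automorphisms normalising $L(G)$: those $\sigma\in\Aut(G)$ with $\sigma(S_{ij})=S_{ij}$ for all $i,j$, together with the permutations of the parts compatible with the family $\{S_{ij}\}$ (up to translations within parts). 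The whole problem then reduces to choosing the $S_{ij}$ so that (i) $\Gamma$ is regular, i.e.\ $\sum_{j\neq i}|S_{ij}|$ is independent of $i$, and (ii) $A=L(G)$. Since $L(G)$ is semiregular with the parts as orbits, condition (ii) is equivalent to $A$ having trivial vertex-stabiliser and having the parts as its exact orbits.

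For the sufficiency direction I would produce, for each admissible pair $(G,m)$, connection sets realising (i) and (ii). The guiding principle is that additional parts supply additional freedom to destroy symmetry, so the critical instance is the smallest admissible $m$ for each $G$, which is $m=3$ for all but a handful of small groups. My strategy has three ingredients: (a) between two chosen parts I place a connection set with trivial setwise stabiliser in $\Aut(G)$, cutting the automorphism symmetry down towards the inner part and reusing the rigidity techniques behind Theorem~\ref{theo=HGR} and Theorem~1.1 of~\cite{DFS}; (b) I use the connection sets involving the remaining parts to make the $m$ parts pairwise distinguishable, e.g.\ by arranging distinct valency profiles or local neighbourhood structure, so that no nontrivial permutation of the parts extends to an automorphism and every element of $A$ fixes each part setwise; and (c), crucially, I exploit the extra parts to break precisely those residual symmetries that cannot be eliminated between two parts alone. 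Ingredient (c) is what lets groups such as $D_8$, which have no HGR by Theorem~\ref{theo=HGR}, nonetheless admit a $3$-HGR and so be absent from the exception list of Theorem~\ref{theo=main}. Passing from the smallest admissible $m$ to larger $m$ should follow by attaching a further part connected in a pattern distinct from all existing ones, which both preserves $L(G)$ and is consistent with the nested, shrinking exception lists of Theorem~\ref{theo=main}. I would make the construction uniform in $|G|$ wherever possible, leaving only finitely many small pairs $(G,m)$ near the exceptions to be settled by explicit connection sets, verified in \magma{} when convenient.

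For the necessity direction I first note that every $m$-HGR is in particular a regular G$m$SR, so for each $m$ the groups excluded in Theorem~1.1 of~\cite{DFS} are automatically excluded here; this accounts for $C_1,C_2,C_3$ when $m=3$, for $C_1,C_2$ when $m=4$, and for $C_1$ when $5\leq m\leq9$. It remains to rule out the \emph{extra} exceptions $C_4,C_5,C_2^2,D_6$ at $m=3$, $C_3$ at $m=4$, and $C_2$ at $m=5$, i.e.\ groups that admit regular G$m$SRs but whose $m$-partite $m$-Cayley graphs are never rigid. For these the number of vertices $m|G|$ is tiny, so I would argue directly, or by exhaustive search, that any regular $m$-partite graph admitting $L(G)$ as a semiregular group with the parts as orbits necessarily admits a strictly larger automorphism group, typically an involution swapping two parts or a nontrivial $\sigma\in\Aut(G)$ fixing every connection set. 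The remaining case $G=C_1$ with $3\leq m\leq9$ is exactly the statement that no regular graph on fewer than ten vertices is asymmetric, which is a finite check.

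The main obstacle I anticipate is ingredient (c) of the construction, namely ruling out automorphisms that do not normalise $L(G)$. This is the perennial difficulty in all GRR-type problems and cannot be dispatched by the ``obvious'' bookkeeping of $\Aut(G)$ and part-permutations alone. For large $G$ it should follow from the rigidity of the chosen core together with the fact that the distinguished parts pin down a base point, forcing any automorphism to normalise $L(G)$ and hence to lie among the obvious ones already eliminated by (a) and (b). For the borderline small groups this is precisely where extra symmetry can intrude, and it is the delicate trade-off between having ``enough parts'' and having a ``small group'' that decides each entry of the exception list in Theorem~\ref{theo=main}.
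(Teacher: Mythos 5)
Your global architecture is right: necessity does follow from the fact that every $m$-HGR is a regular G$m$SR (importing the exclusions of~\cite[Theorem~1.1]{DFS}) together with a finite search for the handful of extra exceptions, and sufficiency must come from explicit connection-set constructions. This matches the paper. But the sufficiency half of your plan has two genuine gaps. First, your inductive step --- ``attaching a further part'' to pass from an $m$-HGR to an $(m+1)$-HGR --- is incompatible with the regularity requirement. If the old graph is $d$-regular and the new one is $d'$-regular, then every old part must gain exactly $d'-d$ neighbours in the new part, so all the new connection sets $T_{i,m+1}$ have the same size $d'-d$ and $d'=md/(m-1)$; this both imposes a divisibility condition on $d$ and connects the new part symmetrically to all old ones, which is exactly the kind of symmetry you are trying to avoid. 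The paper does something structurally different (Lemmas~\ref{Lem3PGSR}, \ref{Lem4PGSR} and~\ref{Lem5PGSR}): it fixes a rigid core on $3$, $4$ or $5$ parts whose parts have two prescribed valencies $k$ and $k+1$, and appends a chain of auxiliary parts \emph{two at a time} with connection sets of sizes $1$, $k-1$ and $k$ arranged so that the whole graph is $(k+1)$-regular; the core is then recovered inside the big graph as the set of vertices lying on $3$-cycles, which forces every automorphism to preserve the core and then to propagate down the chain. This is also why the paper needs separate cores for odd and even $m$, a point your plan does not see.

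Second, the rigidity of the core itself --- your ingredient (c) --- is asserted rather than proved, and it is where essentially all the work lies. A connection set with trivial setwise stabiliser in $\Aut(G)$ controls only the automorphisms normalising $L(G)$; to control the rest, the paper carries out, for each of the finitely many shapes of generating set (Sections~\ref{Sec4}--\ref{Sec6}), a detailed analysis of the induced subgraphs $[\Gamma(1_i)]$ on the neighbourhoods of the base vertices, showing these are pairwise non-isomorphic (so each part is preserved setwise), that point stabilisers fix neighbourhoods pointwise, and then that triviality of the stabiliser propagates through a generating set (Lemma~\ref{Lem=stabilizertrivial}). Without an argument of this kind your proposal establishes the result for no particular group; as written it is a programme whose hardest step is deferred to ``it should follow.''
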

	
	The the paper is structured as follows. After this Introduction, we establish in Section~\ref{Sec2} the necessary terminology and notations, followed by the key idea of our approach. In Sections~\ref{Sec3} and~\ref{Sec4}, we treat groups with a small generating set. With these preparations, we finally prove Theorem~\ref{theo=main} for odd $m$ and even $m$, respectively, in Section~\ref{Sec5} and~\ref{Sec6}. For the rest of this paper, all groups and graphs are assumed to be finite.

	\section{Approach}\label{Sec2}
	
	Let $\Gamma$ be a graph. Denote by $V(\Gamma)$ and $E(\Gamma)$ the vertex set and edge set of $\Gamma$, respectively.
	For a subset $S$ of $V(\Gamma)$, denote by $\Gamma[S]$ the subgraph induced by $S$ in $\Gamma$. When the graph $\Gamma$ is clear from the context, we simply write $[S]$ for $\Gamma[S]$. For $v\in V(\Gamma)$, let $\Gamma(v)$ denote the neighborhood (set of neighbors) of $v$ in $\Gamma$, and let $\Gamma(\Gamma(v))$ denote the set of neighbors of the vertices in $\Gamma(v)$.
	
	Let $G$ be a group. Denote by $d(G)$ the smallest size of a generating set of $G$. For $g\in G$, denote the order of $g$ by $|g|$. If $G$ acts on a set $\Omega$, then for $\omega \in \Omega$, denote by $G_\omega$ the stabilizer in $G$ of $\omega$, that is, $G_\omega$ denotes the subgroup of $G$ that consists of elements fixing $\omega$. We say that $G$ is \emph{semiregular} on $\Omega$ if $G_\omega=1$ for all $\omega \in \Omega$. Moreover, $G$ is \emph{regular} if it is semiregular and transitive.
	
	For convenience in the subsequent discussion, we introduce some terminology and notations related to $m$-Cayley graphs of the group $G$.
	For a subset $T$ of $G$, denote
	\[
	T^{-1}=\{t^{-1}~|~t\in T\}.
	\]
	Now let $m$ be a positive integer, and let $(T_{i,j})_{m\times m}$ be an $m\times m$ matrix with entries $T_{i,j}\subseteq G$ satisfying both of the conditions:
	\begin{itemize}
		\item $1\notin T_{i,i}$ for all $i\in\{1,\dots,m\}$;
		\item $T_{j,i}=T_{i,j}^{-1}$ for all $i,j\in\{1,\dots,m\}$.
	\end{itemize}
	Then the \emph{$m$-Cayley graph} $\Cay(G,(T_{i,j})_{m\times m})$ of $G$ with \emph{connection matrix} $(T_{i,j})_{m\times m}$ is the graph with vertex set
	\[
	G\times\{1,\dots,m\}=\bigcup_{i\in\{1,\dots,m\}}G_i,
	\]
	where $G_i=G\times\{i\}=\{g_i~|~ g\in G\}$ and $g_i=(g,i)$, and edge set
	\begin{equation}\label{EqDef}
		\bigcup_{i,j\in\{1,\dots,m\}}\big\{\{g_i, (tg)_j\}~|~g\in G,\,t\in T_{i,j}\big\}.
	\end{equation}
	
	\begin{notation}
		Throughout the paper, $g_i$ is a concise notation for $(g,i)$ if $g$ is an element of a group and $i$ is an positive integer (we will still use $(g,i)$ sometimes if it makes the context clearer). Similarly, for a set $S$ of elements, $S\times\{i\}$ can be concisely denoted as $S_i$.
	\end{notation}
	
	The $1$-Cayley graph of $G$ with connection matrix $(T)$ is exactly the Cayley graph of $G$ with connection set $T$. A regular $m$-Cayley graph $\Cay(G,(T_{i,j})_{m\times m})$ with $m\geq2$ and $T_{1,1}=\dots=T_{m,m}=\emptyset$ is called an \emph{$m$-Haar graph}.
	
	\begin{remark}
		By the condition $T_{j,i}=T_{i,j}^{-1}$ in the definition of a connection matrix $(T_{i,j})_{m\times m}$, the sets $T_{j,i}$ with $j>i$ are determined by the sets $T_{i,j}$. Moreover, the definition of an $m$-Haar graph requires $T_{1,1}=\dots=T_{m,m}=\emptyset$. Thus, to give an $m$-Haar graph, we only need to specify the sets $T_{i,j}$ for $1\leq i<j\leq m$.
	\end{remark}
	
	For each $g\in G$, the right multiplication of $g$ induces an automorphism
	\[
	R_m(g): G\times\{1,\dots,m\}\to G\times\{1,\dots,m\},\ \ x_i\mapsto (xg)_i
	\]
	of each $m$-Cayley graph $\Cay(G,(T_{i,j})_{m\times m})$. Clearly, $R_m(G):=\{R_m(g):g\in G\}$ is a subgroup of the automorphism group of $\Cay(G,(T_{i,j})_{m\times m})$, which is semiregular on the vertex set with orbits $G_1,\dots,G_m$. Since $R_m$ is a group isomorphism from $G$ to $R_m(G)$, we identify $R_m(G)$ with $G$ when there is no confusion.
	Also, it is easy to see that a graph is (isomorphic to) an $m$-Cayley graph of $G$ if and only if the graph has a semiregular group of automorphisms isomorphic to $G$ with $m$ orbits.
	Hence Definition~\ref{DefmHGR} leads to the following fact.
	
	\begin{lemma}
		For a group $G$, the $m$-HGRs of $G$ are the $m$-Haar graphs of $G$ whose automorphism group is isomorphic to $G$.
	\end{lemma}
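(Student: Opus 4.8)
The plan is to prove the stated equivalence between $m$-HGRs and $m$-Haar graphs with automorphism group isomorphic to $G$. This lemma is essentially a definitional unpacking, so the proof should be short, and the main work is to match the two sets of conditions carefully.

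\medskip

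First I would prove the forward direction. Suppose $\Gamma$ is an $m$-HGR of $G$. By Definition~\ref{DefmHGR}, $\Gamma$ is regular and $m$-partite, $\Aut(\Gamma)\cong G$, and $\Aut(\Gamma)$ acts semiregularly on $V(\Gamma)$ with its orbits giving the $m$-partition. Since $\Gamma$ has a semiregular group of automorphisms (namely $\Aut(\Gamma)\cong G$) with exactly $m$ orbits, the fact recorded just before this lemma tells us that $\Gamma$ is isomorphic to an $m$-Cayley graph of $G$, say $\Gamma\cong\Cay(G,(T_{i,j})_{m\times m})$, where the orbits $G_1,\dots,G_m$ of $R_m(G)$ are the parts of the $m$-partition. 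Because the $m$-partition is an honest $m$-colouring, no two vertices in the same part $G_i$ are adjacent; by the description of the edge set in~\eqref{EqDef}, adjacency within $G_i$ is governed precisely by $T_{i,i}$, so $T_{i,i}=\emptyset$ for every $i$. Together with regularity, this says $\Gamma\cong\Cay(G,(T_{i,j})_{m\times m})$ is a regular $m$-Cayley graph with empty diagonal, hence an $m$-Haar graph of $G$ by definition; and $\Aut(\Gamma)\cong G$ holds by hypothesis.

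\medskip

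Next I would prove the converse. Suppose $\Gamma$ is an $m$-Haar graph of $G$ with $\Aut(\Gamma)\cong G$. By the definition of an $m$-Haar graph, $\Gamma=\Cay(G,(T_{i,j})_{m\times m})$ is regular and satisfies $T_{1,1}=\dots=T_{m,m}=\emptyset$, and we have the semiregular subgroup $R_m(G)\leq\Aut(\Gamma)$ with orbits $G_1,\dots,G_m$. Since $\Aut(\Gamma)\cong G\cong R_m(G)$ and $R_m(G)$ already has order $|G|$, comparing orders gives $\Aut(\Gamma)=R_m(G)$, so $\Aut(\Gamma)$ itself acts semiregularly with the $m$ orbits $G_1,\dots,G_m$. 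The vanishing of the diagonal blocks $T_{i,i}$ forces, via~\eqref{EqDef}, that every edge joins vertices in distinct parts $G_i$, so these orbits form a proper $m$-partition and $\Gamma$ is $m$-partite. All the requirements of Definition~\ref{DefmHGR} are thereby met, so $\Gamma$ is an $m$-HGR of $G$.

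\medskip

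The only point demanding a little care---and what I would flag as the main (albeit minor) obstacle---is verifying that the orbits $G_1,\dots,G_m$ genuinely give an $m$-partition in the graph-theoretic sense, i.e.\ that each $G_i$ is an independent set; this is exactly where the condition $T_{i,i}=\emptyset$ is used, translating the ``$m$-partite'' half of Definition~\ref{DefmHGR} into the empty-diagonal half of the $m$-Haar definition. The isomorphism statement $\Aut(\Gamma)\cong G$ transfers verbatim between the two formulations, and the semiregularity with $m$ orbits is built into both via the identification of $R_m(G)$ with $G$. No nontrivial computation is required beyond these bookkeeping steps.
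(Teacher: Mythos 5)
Your proof is correct and is exactly the definitional unpacking the paper has in mind: the paper states this lemma without a written proof, deriving it as an immediate consequence of Definition~\ref{DefmHGR} and the observation that a graph is an $m$-Cayley graph of $G$ if and only if it has a semiregular group of automorphisms isomorphic to $G$ with $m$ orbits. Your two directions (independence of the parts $\Leftrightarrow$ empty diagonal blocks, and the order comparison forcing $\Aut(\Gamma)=R_m(G)$ in the converse) fill in precisely the bookkeeping the authors left implicit.
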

	
	For a (not necessarily regular) $m$-Cayley graph $\Gamma=\Cay(G,(T_{i,j})_{m\times m})$ with $T_{1,1}=\dots=T_{m,m}=\emptyset$, if $\Aut(\Gamma)\cong G$, then $\Gamma$ is called an \emph{$m$-partite graphical semiregular representation} ($m$-PGSR) of $G$.
	The key idea in the proof of Theorem~\ref{theo=main} is to construct $m$-HGRs from small $\ell$-PGSRs, which is facilitated by the following three technical lemmas.
	
	\begin{lemma}\label{Lem3PGSR}
		Let $G$ be a finite group, and let $\Sigma$ be a $3$-PGSR of $G$ such that the following conditions are satisfied for some integer $k$ with $2\leq k\leq|G|$:
		\begin{enumerate}[\rm(a)]
			\item the vertex $(g,1)$ has valency $k+1$ for each $g\in G$;
			\item the vertices $(g,2)$ and $(g,3)$ have valency $k$ for each $g\in G$;
			\item there exists a $3$-cycle through $(g,i)$ for each $g\in G$ and $i\in\{1,2,3\}$.
		\end{enumerate}
		Then $G$ has an $m$-HGR for each odd integer $m\geq 5$.
	\end{lemma}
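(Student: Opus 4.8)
The plan is to build the required $m$-HGR explicitly from $\Sigma$, keeping the rigid tripartite ``core'' intact and attaching the remaining $m-3$ vertex-classes along a path that, together with two of the core classes, closes up into a single cycle of classes. Writing $\Sigma=\Cay(G,(T_{i,j})_{3\times3})$, conditions (a) and (b) force $|T_{1,2}|=|T_{1,3}|=|T_{2,3}|+1$; setting $a=|T_{2,3}|$, this makes $k=2a+1$ odd, with class $1$ having valency $k+1$ and classes $2,3$ valency $k$. I would then define an $m$-Haar graph $\Gamma=\Cay(G,(S_{i,j})_{m\times m})$ on classes $G_1,\dots,G_m$ by setting $S_{1,2}=T_{1,2}$, $S_{1,3}=T_{1,3}$, $S_{2,3}=T_{2,3}$ (the core), a matching $S_{2,4}=\{1\}$ from class $2$ to class $4$ and a matching $S_{3,m}=\{1\}$ from class $3$ to class $m$, and along $4,5,\dots,m$ internal layers whose connection-set sizes alternate $k,1,k,1,\dots$, all remaining $S_{i,j}$ being empty. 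A direct degree count then gives every vertex valency $k+1$, so $\Gamma$ is regular; and since the number of path-edges running from class $2$ to class $3$ equals $m-2$, the alternation $1,k,1,\dots$ begins and ends with a matching exactly when $m$ is odd, which is precisely where the hypothesis ``$m\ge5$ odd'' is consumed.

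By construction $R_m(G)\le\Aut(\Gamma)$ acts semiregularly with the $m$ classes as its orbits, so $\Gamma$ is an $m$-Haar graph and it remains to prove $\Aut(\Gamma)=R_m(G)$. I would first note that in the quotient on classes the only triangle is $\{1,2,3\}$, since every other class meets only its two neighbours along the path; hence any triangle of $\Gamma$ must lie inside $G_1\cup G_2\cup G_3$, while condition (c) guarantees that every vertex of this core does lie on a triangle. Thus the set of vertices on triangles is exactly $G_1\cup G_2\cup G_3$, and every $\alpha\in\Aut(\Gamma)$ preserves it. The induced subgraph on this set is precisely $\Sigma$, so $\alpha$ restricts to an automorphism of $\Sigma$; because $\Sigma$ is a $3$-PGSR we have $\Aut(\Sigma)=R_3(G)$, whose elements fix each of $G_1,G_2,G_3$ setwise. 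Hence $\alpha$ preserves each core class, and after composing $\alpha$ with a suitable $R_m(g)^{-1}$ I may assume that $\alpha$ fixes $G_1\cup G_2\cup G_3$ pointwise.

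The last step is a propagation along the path. Each vertex of class $2$ has a single neighbour outside the core, namely its partner under the matching $S_{2,4}=\{1\}$ in class $4$; since $\alpha$ fixes class $2$ pointwise and sends non-core vertices to non-core vertices, it must fix class $4$ pointwise. For the next layer, a vertex of class $4$ is adjacent within the non-core part only to class $5$, so $\alpha$ carries a vertex $u$ of class $5$ to a vertex of class $5$ having the same neighbourhood in the (now fixed) class $4$. Choosing the size-$k$ connection sets to have trivial right stabiliser in $G$ makes the map ``vertex $\mapsto$ its neighbourhood in the previous class'' injective, forcing $\alpha$ to fix class $5$ pointwise; iterating around the cycle fixes every class pointwise, so $\alpha=1$ and therefore $\Aut(\Gamma)=R_m(G)\cong G$.

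The main obstacle is not the rigidity argument itself---condition (c) renders the core canonically detectable and the $3$-PGSR hypothesis pins it down---but rather arranging the construction so that everything fits at once: recovering regularity from the unbalanced valencies of $\Sigma$, exploiting the parity of $m$ to close the alternating path, and (the genuinely delicate point) selecting the thick layers with trivial right stabiliser so that the pointwise propagation cannot stall. I would expect the bookkeeping for the boundary classes $2,3$ and $m$, together with checking that no unintended triangles or coincidental neighbourhoods arise, to demand the most care.
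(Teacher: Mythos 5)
Your overall strategy---detecting the core $G_1\cup G_2\cup G_3$ as the set of vertices lying on $3$-cycles, invoking the $3$-PGSR property to pin down the three core classes and (after composing with some $R_m(g)^{-1}$) fix the core pointwise, then propagating the fixing outward class by class---is exactly the paper's. The gap lies in the propagation step, and it is created by your choice of topology for the auxiliary classes. You arrange $4,5,\dots,m$ in a single path closed up against classes $2$ and $3$, with connection sets alternating between $\{1\}$-matchings and sets of size $k$; as a result, roughly half of the new classes (class $5$, for instance, which is reachable from the already-fixed class $4$ only through a size-$k$ layer, and from class $6$ only through a matching that itself hangs off class $7$ via another thick layer) can only be fixed pointwise by arguing through a size-$k$ connection set. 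This forces your requirement that each size-$k$ set $S$ have trivial setwise right stabiliser, so that $u\mapsto S^{-1}u$ is injective. You assert this can be arranged but do not prove it, and it cannot always be arranged: the hypotheses allow $k=|G|$ (conditions (a)--(b) force $k$ odd but nothing more), in which case the only size-$k$ subset is $G$ itself, the thick layers become complete bipartite joins, and for $m\geq 7$ the classes $G_5\cup G_6$ are attached to the fixed part of the graph only through complete joins and one internal matching, so nontrivial compatible permutations of them are automorphisms and $\Aut(\Gamma)\neq G$. Even for $k<|G|$, the existence of a $k$-subset with trivial right stabiliser is a genuine claim needing an argument (it fails outright for some pairs, e.g.\ every $2$-subset of $C_2^2$ has a stabiliser of order $2$; the oddness of $k$ rules out such examples here, but you never observe or use this).

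The paper sidesteps the issue entirely with a different wiring: the classes $4,\dots,m$ form a ladder in which every class is joined to an already-controlled class by a $\{1\}$-matching ($T_{i,i+2}=\{1\}$ for $2\leq i\leq m-2$), while the thick sets $M$ and $N$ serve only to equalise valencies and are never used for propagation, so they may be arbitrary subsets of the prescribed cardinalities. To repair your version you would need to (i) prove that for every odd $k<|G|$ some $k$-subset of $G$ has trivial setwise right stabiliser, and (ii) treat or exclude the case $k=|G|$; rerouting all of the propagation through $\{1\}$-matchings, as the paper does, is the cleaner fix.
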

	
	\begin{proof}
		Write $\Sigma=\Cay(G,(T_{i,j})_{3\times 3})$ with $T_{1,2}=S$, $T_{1,3}=L$ and $T_{2,3}=T$. According to conditions~(a) and~(b), we have $|S|+|L|=k+1$ and $|S|+|T|=|L|+|T|=k$.
		Take any subsets $M$ and $N$ of $G$ such that $|M|=k-1$ and $|N|=k$.
		Let $\Gamma_m=\Cay(G,(T_{i,j})_{m\times m})$ with $T_{1,2}=S$, $T_{1,3}=L$, $T_{2,3}=T$, $T_{m-1,m}=N$ and
		\[
		T_{i,j}=
		\begin{cases}
			M&\text{if }j=i+1\text{ is odd and }4\leq i\leq m-3,\\
			\{1\}&\text{if }j=i+2\text{ and }2\leq i\leq m-2,\\
			\emptyset&\text{otherwise}.
		\end{cases}
		\]
		The structure of $\Gamma_m$ is illustrated in Figure~\ref{Fig1}. Note that $\Gamma_m$ is $(k+1)$-regular since
		\[
		|S|+|L|=|S|+|T|+1=|L|+|T|+1=|M|+2=|N|+1=k+1.
		\]
		Hence $\Gamma_m$ is an $m$-Haar graph of $G$. Let $A=\Aut(\Gamma_m)$.
		
		\begin{figure}[!hhh]
			\vspace{3mm}
			\caption{From $3$-PGSR to $m$-HGR for odd $m\geq 5$}\label{Fig1}
			\begin{center}
				\begin{tikzpicture}[node distance=1cm,thick,scale=0.75,every node/.style={transform shape}]
					\node[circle](C0){};
					\node[left=of C0,circle,draw,inner sep=11.4pt, label=45:{}](G2){$G_{2}$};
					\node[below=of G2,circle,draw,inner sep=11.5pt,label=-90:{}](G3){$G_{3}$};
					\node[left=of G3,circle,draw,inner sep=11.3pt,label=-90:{}](G1){$G_1$};
					\node[right=of G2,circle,draw,inner sep=10.1pt,label=-90:{}](G4){$G_{2s}$};
					\node[below=of G4,circle,draw,inner sep=7.1pt,label=-90:{}](G5){$G_{2s+1}$};
					\node[right=of G5,circle,draw,inner sep=11pt,label=-90:{}](Gm){$G_{m}$};
					\node[right=of G4,circle,draw,inner sep=7.2pt,label=-90:{}](Gm-1){$G_{m-1}$};
					%\node[below=of G5, label=left:{\Large $m$ is odd}](){};
					\draw(Gm) to node[right]{$N$} (Gm-1);
					\draw[dashed](Gm) to node[below]{$\{1\}$} (G5);
					\draw[dashed](Gm-1) to node [above]{$\{1\}$}(G4);
					\draw(G4) to node[right]{$M$}(G5);
					\draw[dashed](G2) to node [above]{$\{1\}$}(G4);
					\draw[dashed](G3) to node[below]{$\{1\}$}(G5);
					\draw(G1) to node[above]{$S$}(G2);
					\draw(G1) to node[below]{$L$} (G3);
					\draw(G2) to node[right]{$T$}(G3);
				\end{tikzpicture}
			\end{center}
		\end{figure}
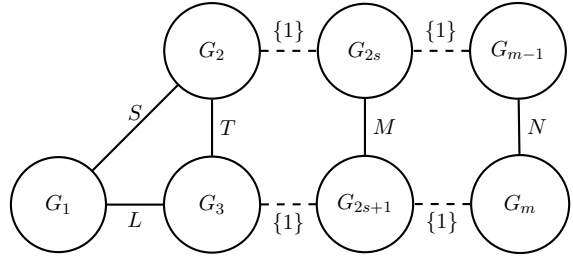
		
		By condition~(c), there exists a $3$-cycle through $(g,i)$ for each $g\in G$ and $i\in\{1,2,3\}$.
		On the other hand, for each $i\in\{4,\dots,m\}$, there is no $3$-cycle through $(g,i)$ for any $g\in G$.
		Hence $A$ stabilizes $G_1\cup G_2\cup G_3$. Since the induced subgraph $[G_1\cup G_2\cup G_3]\cong \Sigma$ is a $3$-PGSR of $G$, it follows that $A$ stabilizes $G_1$, $G_2$ and $G_3$ respectively, and that $A_{1_1}$ fixes $G_1\cup G_2\cup G_3$ pointwise. Since $T_{2,4}=T_{3,5}=\{1\}$ and $T_{i,j}=\emptyset$ for pairs $(i,j)$ with $i \leq3$ and $j\geq 4$ other than $(2,4)$ and $(3,5)$, it follows that $A$ stabilizes $G_4$ and $G_5$ respectively, and that $A_{1_1}$ fixes $G_4\cup G_5$ pointwise. Repeating this argument, we derive that $A$ stabilizes $G_i$ for each $i\in\{1,\dots,m\}$, and $A_{1_1}$ fixes $G_1\cup\dots\cup G_{m}$ pointwise. This shows that $\Gamma_m$ is an $m$-HGR of $G$.
	\end{proof}
	
	\begin{lemma}\label{Lem4PGSR}
		Let $G$ be a finite group, and let $\Sigma$ be a $4$-PGSR of $G$ such that the following conditions are satisfied for some integer $k$ with $2\leq k\leq|G|$:
		\begin{enumerate}[\rm(a)]
			\item the vertices $(g,1)$ and $(g,2)$ have valency $k+1$ for each $g\in G$;
			\item the vertices $(g,3)$ and $(g,4)$ have valency $k$ for each $g\in G$;
			\item there exists a $3$-cycle through $(g,i)$ for each $g\in G$ and $i\in\{1,2,3,4\}$.
		\end{enumerate}
		Then $G$ has an $m$-HGR for each even integer $m\geq 6$.
	\end{lemma}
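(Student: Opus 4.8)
The plan is to mirror the proof of Lemma~\ref{Lem3PGSR}. Starting from the given $4$-PGSR $\Sigma=\Cay(G,(T_{i,j})_{4\times4})$, I would build an $m$-Haar graph $\Gamma_m$ for even $m\ge6$ by attaching to the four ``core'' parts $G_1,\dots,G_4$ a ladder on the new parts $G_5,\dots,G_m$, and then show $\Aut(\Gamma_m)\cong G$. Concretely, fix subsets $M,N\subseteq G$ with $|M|=k-1$ and $|N|=k$ (possible since $2\le k\le|G|$), keep the entries $T_{i,j}$ with $1\le i<j\le4$ as in $\Sigma$, put $T_{m-1,m}=N$, and set
\[
T_{i,j}=
\begin{cases}
M&\text{if }j=i+1\text{ is even and }5\le i\le m-3,\\
\{1\}&\text{if }j=i+2\text{ and }3\le i\le m-2,\\
\emptyset&\text{otherwise.}
\end{cases}
\]
Here the singleton entries form two ``rails'' $G_3,G_5,\dots,G_{m-1}$ and $G_4,G_6,\dots,G_m$, joined by the $M$-``rungs'' together with the final rung $N$. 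A direct valency count (each interior new vertex sees two singleton rail-edges plus one $M$-rung, giving $1+1+(k-1)$, each end vertex sees one rail-edge plus the $N$-rung, giving $1+k$, and the formerly valency-$k$ vertices of $G_3\cup G_4$ gain one singleton edge each) shows $\Gamma_m$ is a $(k+1)$-regular $m$-Haar graph of $G$; the degenerate smallest case $m=6$ uses only $N$ and no $M$-rung.

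The heart of the argument is to recognize $W:=G_1\cup G_2\cup G_3\cup G_4$ canonically inside $\Gamma_m$. First I would verify that no $3$-cycle passes through any vertex of $G_5\cup\dots\cup G_m$: every edge incident to a part of index $\ge5$ joins parts whose indices differ by exactly $1$ (a rung) or $2$ (a rail), so a $3$-cycle through such a vertex would force three parts $a<b<c$, pairwise adjacent in the part-adjacency pattern, with $c-a\le2$, i.e.\ a consecutive triple $\{a,a+1,a+2\}$; but every rung has odd smaller index, so $\{a,a+1\}$ and $\{a+1,a+2\}$ cannot both be rungs, ruling this out. Combined with condition~(c), the set of vertices lying on a $3$-cycle is exactly $W$, so $A:=\Aut(\Gamma_m)$ stabilizes $W$ setwise. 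Since the induced subgraph $[W]$ is precisely $\Sigma$ (the added edges all leave $W$), and $\Sigma$ is a $4$-PGSR with $\Aut(\Sigma)=R_4(G)$, restriction to $W$ forces $A$ to stabilize each of $G_1,\dots,G_4$ separately and forces the point stabilizer $A_{1_1}$ to fix $W$ pointwise (as $R_4(G)$ is regular on each part).

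Finally I would propagate this control along the rails. After $G_1\cup\dots\cup G_{2s}$ has been handled, its external neighbours are exactly $G_{2s+1}$ (whose only backward edge is the singleton to $G_{2s-1}$) and $G_{2s+2}$ (whose only backward edge is the singleton to $G_{2s}$); since $A$ already stabilizes $G_{2s-1}$ and $G_{2s}$ separately and these are not mutually connected to the wrong rail, $A$ stabilizes $G_{2s+1}$ and $G_{2s+2}$ separately. Moreover each $g_{2s+1}$ is the unique neighbour in $G_{2s+1}$ of the fixed vertex $g_{2s-1}$ (and likewise for the other rail), so $A_{1_1}$ fixes these parts pointwise too. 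Iterating up to $G_m$ yields $A_{1_1}=1$, whence $A=R_m(G)\cong G$ and $\Gamma_m$ is an $m$-HGR.

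The main obstacle is the combinatorial bookkeeping behind the construction, which must simultaneously (i) restore $(k+1)$-regularity everywhere using only the sizes $k-1$, $k$, and $1$; (ii) stay triangle-free on the new parts so that $W$ is detectable from $3$-cycles alone; and (iii) anchor each newly added pair of parts to the previous pair through \emph{unique} singleton edges, so the pointwise fixing can be pushed forward. Choosing the two rails, the offset of the rungs, and the index ranges so that they respect the parity of $m$ and degenerate correctly at $m=6$ is the delicate part; everything else follows the template of Lemma~\ref{Lem3PGSR}.
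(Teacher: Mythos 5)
Your construction is literally the paper's: the same connection matrix (core entries kept, $T_{m-1,m}=N$, the same case formula for the $M$-rungs and singleton rails), the same $3$-cycle detection of $G_1\cup\dots\cup G_4$, and the same propagation of part-stabilization and pointwise fixing along the two rails. The proposal is correct and follows essentially the same approach as the paper, with somewhat more explicit justification of the triangle-freeness on the new parts.
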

	
	\begin{proof}
		Write $\Sigma=\Cay(G,(T_{i,j})_{4\times 4})$ with $T_{1,2}=S$, $T_{1,3}=L$, $T_{1,4}=K$, $T_{2,3}=R$, $T_{2,4}=T$ and $T_{3,4}=U$. Then by conditions~(a) and~(b), we have $|S|+|L|+|K|=|R|+|T|+|S|=k+1$ and $|L|+|R|+|U|=|K|+|T|+|U|=k$. Take any subsets $M$ and $N$ of $G$ of size $k-1$ and $k$, respectively. Let $\Gamma_m=\Cay(G,(T_{i,j})_{m\times m})$ with $T_{1,2}=S$, $T_{1,3}=L$, $T_{1,4}=K$, $T_{2,3}=R$, $T_{2,4}=T$, $T_{3,4}=U$, $T_{m-1,m}=N$ and
		\[
		T_{i,j}=
		\begin{cases}
			M&\text{if }j=i+1\text{ is even and }5\leq i\leq m-3,\\
			\{1\}&\text{if }j=i+2\text{ and }3\leq i\leq m-2,\\
			\emptyset&\text{otherwise}.
		\end{cases}
		\]
		The structure of $\Gamma_m$ is illustrated in Figure~\ref{Fig2}. Since
		\begin{align*}
			|S|+|L|+|K|=|R|+|T|+|S|&=|L|+|R|+|U|+1\\
			&=|K|+|T|+|U|+1=|M|+2=|N|+1=k+1,
		\end{align*}
		it follows that $\Gamma_m$ is $(k+1)$-regular and hence an $m$-Haar graph of $G$. Let $A=\Aut(\Gamma_m)$.
		
		\begin{figure}[!hhh]
			\vspace{3mm}
			\caption{From $4$-PGSR to $m$-HGR for even $m\geq 6$}\label{Fig2}
			\begin{center}
				\begin{tikzpicture}[node distance=1cm,thick,scale=0.75,every node/.style={transform shape}]
					\node[circle](C0){};
					\node[right=of C0,circle,draw,inner sep=11pt, label=45:{}](H1){$G_{1}$};
					\node[below=of H1,circle,draw,inner sep=11pt,label=-90:{}](H2){$G_{2}$};
					\node[right=of H1,circle,draw,inner sep=11pt,label=-90:{}](H3){$G_3$};
					\node[right=of H2,circle,draw,inner sep=11pt,label=-90:{}](H4){$G_{4}$};
					\node[right=of H4,circle,draw,inner sep=9.8pt,label=-90:{}](H2k){$G_{2s}$};
					\node[right=of H3,circle,draw,inner sep=6.2pt,label=-90:{}](H2k-1){$G_{2s-1}$};
					\node[right=of H2k-1,circle,draw,inner sep=6.5pt,label=-90:{}](Hm-1){$G_{m-1}$};
					\node[right=of H2k,circle,draw,inner sep=9.8pt,label=-90:{}](Hm){$G_{m}$};
					%\node[below=of H2k, label=left:{\Large $m$ is even}](){};
					\draw(Hm) to node[right]{$N$} (Hm-1);
					\draw[dashed](Hm) to node[below]{$\{1\}$} (H2k);
					\draw[dashed](Hm-1) to node [above]{$\{1\}$}(H2k-1);
					\draw(H2k) to node[right]{$M$}(H2k-1);
					\draw[dashed](H4) to node [below]{$\{1\}$}(H2k);
					\draw[dashed](H3) to node[above]{$\{1\}$}(H2k-1);
					\draw(H1) to node[left]{$S$}(H2);
					\draw(H1) to node[above]{$L$} (H3);
					\draw(H1) to node[right]{$K$} (H4);
					\draw(H2) to node[left]{$R$}(H3);
					\draw(H2) to node[below]{$T$}(H4);
					\draw(H3) to node[right]{$U$}(H4);
				\end{tikzpicture}
			\end{center}
		\end{figure}
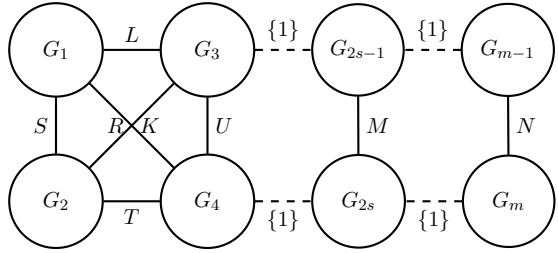
		
		According to condition~(c), there exists a $3$-cycle through $(g,i)$ for each $g\in G$ and $i\in\{1,2,3,4\}$.
		On the other hand, for each $i\in\{5,\dots,m\}$, there is no $3$-cycle through $(g,i)$ for any $g\in G$.
		Hence $A$ stabilizes $G_1\cup G_2\cup G_3\cup G_4$. Since $[G_1\cup G_2\cup G_3\cup G_4]\cong \Sigma$ is a $4$-PGSR of $G$,
		the group $A$ stabilizes $G_1$, $G_2$, $G_3$ and $G_4$ respectively, and $A_{1_1}$ fixes $G_1\cup G_2\cup G_3\cup G_4$ pointwise. Since $T_{3,5}=T_{4,6}=\{1\}$ and $T_{i,j}=\emptyset$ for pairs $(i,j)$ with $i \leq4$ and $j\geq 5$ other than $(3,5)$ and $(4,6)$,  it follows that $A$ stabilizes $G_5$ and $G_6$ respectively, and that $A_{1_1}$ fixes $G_5\cup G_6$ pointwise. Repeating this argument, we derive that $A$ stabilizes $G_i$ for each $i\in\{1,\dots,m\}$, and $A_{1_1}$ fixes $G_1\cup\dots\cup G_{m}$ pointwise.
		Therefore, $\Gamma_m$ is an $m$-HGR of $G$.
	\end{proof}
	
	\begin{lemma}\label{Lem5PGSR}
		Let $G$ be a finite group, and let $\Sigma$ be a $5$-PGSR of $G$ such that the following conditions are satisfied for some integer $k$ with $2\leq k\leq|G|$:
		\begin{enumerate}[\rm(a)]
			\item the vertices $(g,1)$, $(g,2)$ and $(g,3)$ have valency $k+1$ for each $g\in G$;
			\item the vertices $(g,4)$ and $(g,5)$ have valency $k$ for each $g\in G$;
			\item there exists a $3$-cycle through $(g,i)$ for each $g\in G$ and $i\in\{1,2,3,4,5\}$.
		\end{enumerate}
		Then $G$ has an $m$-HGR for each odd integer $m\geq 7$.
	\end{lemma}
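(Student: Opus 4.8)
The plan is to imitate the proofs of Lemmas~\ref{Lem3PGSR} and~\ref{Lem4PGSR} essentially verbatim, now attaching the ladder to the five-part block $G_1\cup\dots\cup G_5$ carrying $\Sigma$. Writing $\Sigma=\Cay(G,(T_{i,j})_{5\times5})$, conditions~(a) and~(b) say that the $i$th row sum $\sum_j|T_{i,j}|$ of the connection matrix equals $k+1$ for $i\in\{1,2,3\}$ and equals $k$ for $i\in\{4,5\}$. I would pick subsets $M,N\subseteq G$ with $|M|=k-1$ and $|N|=k$, and define $\Gamma_m=\Cay(G,(T_{i,j})_{m\times m})$ by retaining the $5$-PGSR blocks for $1\le i<j\le5$, putting $T_{m-1,m}=N$, and defining the remaining upper-triangular blocks by
\[
T_{i,j}=
\begin{cases}
M&\text{if }j=i+1\text{ is odd and }6\le i\le m-3,\\
\{1\}&\text{if }j=i+2\text{ and }4\le i\le m-2,\\
\emptyset&\text{otherwise}.
\end{cases}
\]
Thus the two valency-$k$ parts $G_4,G_5$ are joined to the ladder through $T_{4,6}=T_{5,7}=\{1\}$, exactly as $G_3,G_4$ were joined in Lemma~\ref{Lem4PGSR}, and the ladder is the familiar zig-zag of $M$- and $N$-rungs glued together by distance-two copies of $\{1\}$.

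Next I would check $(k+1)$-regularity. Parts $G_1,G_2,G_3$ are untouched and keep valency $k+1$; each of $G_4,G_5$ gains exactly one edge (through the $\{1\}$ joining it to $G_6$, resp.\ $G_7$) and so reaches valency $k+1$; and every ladder part $G_i$ with $i\ge6$ has valency either $(k-1)+1+1$ in the interior or $k+1$ at the $N$-end, using $|M|=k-1$ and $|N|=k$. Hence $\Gamma_m$ is $(k+1)$-regular and, having empty diagonal blocks, is an $m$-Haar graph of $G$. (The boundary values $m=7$, where there are no $M$-rungs and $T_{6,7}=N$, and $m=9$ should be confirmed separately against the formula.)

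Set $A=\Aut(\Gamma_m)$. Condition~(c) provides a $3$-cycle through each vertex of $G_1\cup\dots\cup G_5$, whereas no $3$-cycle meets $G_6\cup\dots\cup G_m$ (the delicate point, treated below); hence $A$ stabilises $G_1\cup\dots\cup G_5$. Since $[G_1\cup\dots\cup G_5]\cong\Sigma$ is a $5$-PGSR and $\Aut(\Sigma)\cong G$ is semiregular with orbits $G_1,\dots,G_5$, the group $A$ stabilises each $G_i$ ($1\le i\le5$) and $A_{1_1}$ fixes $G_1\cup\dots\cup G_5$ pointwise. I would then propagate along the ladder exactly as in the earlier lemmas: since $T_{4,6}=T_{5,7}=\{1\}$ are the only nonempty blocks between $\{1,\dots,5\}$ and $\{6,\dots,m\}$, the group $A$ stabilises $G_6,G_7$ and $A_{1_1}$ fixes $G_6\cup G_7$ pointwise (each $(g,6)$ being the unique $G_6$-neighbour of $(g,4)$, and similarly on the other side); iterating through the pairs $(G_6,G_7),(G_8,G_9),\dots$ via the blocks $T_{i,i+2}=\{1\}$ forces $A$ to stabilise every $G_i$ and $A_{1_1}$ to fix $V(\Gamma_m)$ pointwise. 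As $R_m(G)\le A$ is transitive on $G_1$ with trivial stabiliser $A_{1_1}=1$, we get $A=R_m(G)\cong G$, so $\Gamma_m$ is an $m$-HGR.

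The only step requiring genuine care — and the one where the parity restriction ``$j=i+1$ is odd'' earns its keep — is the triangle claim that no $3$-cycle passes through $G_6\cup\dots\cup G_m$. I would argue on the ``part-graph'' with vertices $G_1,\dots,G_m$ and an edge for each nonempty block $T_{i,j}$: a $3$-cycle of $\Gamma_m$ occupies three distinct parts (diagonal blocks are empty) and hence yields a triangle there. Every edge incident to an index $\ge6$ joins parts at index-distance $1$ (an $M$- or $N$-rung) or $2$ (a $\{1\}$-block), so a triangle with largest index $c\ge6$ must be $\{G_{c-2},G_{c-1},G_c\}$ with $(c-1,c)$ a rung and $(c-2,c-1)$ an edge. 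But the rungs are precisely the mutually disjoint consecutive pairs $(6,7),(8,9),\dots,(m-1,m)$, whose smaller indices are all even; so $(c-1,c)$ being a rung forces $c-1$ even, whence $c-2$ is odd and $(c-2,c-1)$ is neither a rung (wrong parity) nor a block edge (since $c-1\ge6>5$) nor a $\{1\}$-block (wrong distance) — a contradiction. Hence no such triangle exists. This disjointness of the rungs, guaranteed by the parity condition, is the crux; once it is in place the argument is identical to Lemmas~\ref{Lem3PGSR} and~\ref{Lem4PGSR}.
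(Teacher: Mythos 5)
Your construction is exactly the one in the paper: the same connection matrix (retain the ten blocks of $\Sigma$, set $T_{m-1,m}=N$ with $|N|=k$, put $M$-rungs with $|M|=k-1$ at $(i,i+1)$ for even $i$ with $6\le i\le m-3$, and $\{1\}$-blocks at $(i,i+2)$ for $4\le i\le m-2$), the same regularity count, and the same two-stage automorphism argument (triangles isolate $G_1\cup\dots\cup G_5$, then propagate along the ladder via the $\{1\}$-blocks). The only difference is that you spell out the ``no $3$-cycle through $G_i$ for $i\ge6$'' claim, which the paper simply asserts; your parity argument for it is correct.
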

	
	\begin{proof}
		Write $\Sigma=\Cay(G,(T_{i,j})_{5\times 5})$ with $T_{1,2}=S$, $T_{1,3}=L$, $T_{1,4}=K$, $T_{1,5}=R$, $T_{2,3}=T$, $T_{2,4}=U$,
		$T_{2,5}=Z$, $T_{3,4}=W$, $T_{3,5}=X$ and $T_{4,5}=Y$. Then by conditions~(a) and~(b), we have $|S|+|L|+|K|+|R|=|S|+|T|+|U|+|Z|=|L|+|T|+|W|+|X|=k+1$ and $|K|+|U|+|W|+|Y|=|R|+|Z|+|X|+|Y|=k$. Take any subsets $M$ and $N$ of $G$ of size $k-1$ and $k$, respectively. Let $\Gamma_m=\Cay(G,(T_{i,j})_{m\times m})$ with $T_{1,2}=S$, $T_{1,3}=L$, $T_{1,4}=K$, $T_{1,5}=R$, $T_{2,3}=T$, $T_{2,4}=U$,
		$T_{2,5}=Z$, $T_{3,4}=W$, $T_{3,5}=X$, $T_{4,5}=Y$, $T_{m-1,m}=N$ and
		\[
		T_{i,j}=
		\begin{cases}
			M&\text{if }j=i+1\text{ is odd and }6\leq i\leq m-3,\\
			\{1\}&\text{if }j=i+2\text{ and }4\leq i\leq m-2,\\
			\emptyset&\text{otherwise}.
		\end{cases}
		\]
		The structure of $\Gamma_m$ is illustrated in Figure~\ref{Fig11}. Since
		\begin{align*}
			k+1&=|S|+|L|+|K|+|R|\\
			&=|S|+|T|+|U|+|Z|\\
			&=|L|+|T|+|W|+|X|\\
			&=|K|+|U|+|W|+|Y|+1\\
			&=|R|+|Z|+|X|+|Y|+1\\
			&=|M|+2\\
			&=|N|+1,
		\end{align*}
		it follows that $\Gamma_m$ is $(k+1)$-regular and hence an $m$-Haar graph of $G$. Let $A=\Aut(\Gamma_m)$.
		
		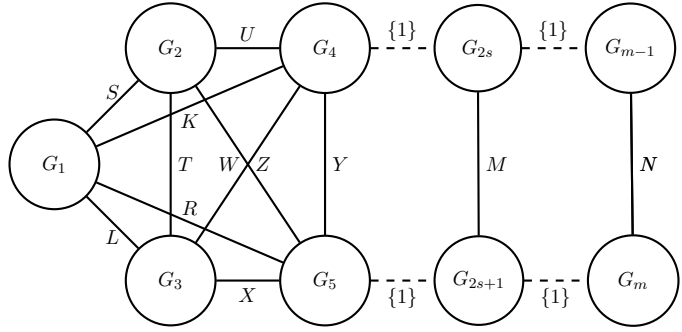
\begin{figure}[!hhh]
			\vspace{3mm}
			\caption{From $5$-PGSR to $m$-HGR for odd $m\geq 7$}\label{Fig11}
			\begin{center}
				\begin{tikzpicture}[node distance=1.2cm,thick,scale=0.7,every node/.style={transform shape}]
					\node[circle](C0){};
					\node[left=of C0,circle,draw,inner sep=11.5pt, label=45:{}](G2){$G_{2}$};
					\node[below=of G2](G21){};
					\node[below=of G21,circle,draw,inner sep=11.5pt,label=-90:{}](G3){$G_{3}$};
					\node[left=of G21,circle,draw,inner sep=11.5pt,label=-90:{}](G1){$G_1$};
					\node[right=of G2,circle,draw,inner sep=11.5pt,label=-90:{}](G4){$G_{4}$};
					\node[right=of G3,circle,draw,inner sep=11.5pt,label=-90:{}](G5){$G_{5}$};
					\node[right=of G5,circle,draw,inner sep=6.8pt,label=-90:{}](G2k+1){$G_{2s+1}$};
					\node[right=of G4,circle,draw,inner sep=9.8pt,label=-90:{}](G2k){$G_{2s}$};
					\node[right=of G2k+1,circle,draw,inner sep=10.8pt,label=-90:{}](Gm){$G_{m}$};
					\node[right=of G2k,circle,draw,inner sep=7.2pt,label=-90:{}](Gm-1){$G_{m-1}$};
					%\node[below=of G5, label=left:{\Large $m$ is odd}](){};
					\draw(Gm) to node[right]{$N$} (Gm-1);
					\draw(G1) to node[above]{$S$}(G2);
					\draw(G1) to node[below]{$L$} (G3);
					\draw(G1) to node[below]{$K$}(G4);
					\draw(G1) to node[above]{$R$} (G5);
					\draw(G2) to node[right]{$T$}(G3);
					\draw(G2) to node[above]{$U$}(G4);
					\draw(G2) to node[right]{$Z$}(G5);
					\draw(G3) to node[left]{$W$}(G4);
					\draw(G3) to node[below]{$X$}(G5);
					\draw(G4) to node[right]{$Y$}(G5);
					\draw[dashed](G4) to node[above]{$\{1\}$}(G2k);
					\draw[dashed](G5) to node [below]{$\{1\}$}(G2k+1);
					\draw[dashed](G2k+1) to node[below]{$\{1\}$}(Gm);
					\draw[dashed](G2k) to node[above]{$\{1\}$}(Gm-1);
					\draw(G2k) to node[right]{$M$}(G2k+1);
					\draw(Gm-1) to node[right]{$N$}(Gm);
				\end{tikzpicture}
			\end{center}
		\end{figure}
		
		According to condition~(c), there exists a $3$-cycle through $(g,i)$ for each $g\in G$ and $i\in\{1,2,3,4,5\}$.
		On the other hand, for each $i\in\{6,\dots,m\}$, there is no $3$-cycle through $(g,i)$ for any $g\in G$.
		Hence $A$ stabilizes $G_1\cup G_2\cup G_3\cup G_4\cup G_5$. Since $[G_1\cup G_2\cup G_3\cup G_4\cup G_5]\cong \Sigma$ is a $5$-PGSR of $G$, the group $A$ stabilizes $G_1$, $G_2$, $G_3$, $G_4$ and $G_5$ respectively, and $A_{1_1}$ fixes $G_1\cup G_2\cup G_3\cup G_4\cup G_5$ pointwise. Since $T_{4,6}=T_{5,7}=\{1\}$ and $T_{i,j}=\emptyset$ for pairs $(i,j)$ with $i \leq5$ and $j\geq 6$ for other subscript pairs of $T$ other than $(4,6)$ and $(5,7)$,  it follows that $A$ stabilizes $G_6$ and $G_7$ respectively, and that $A_{1_1}$ fixes $G_6\cup G_7$ pointwise. Repeating this argument, we derive that $A$ stabilizes $G_i$ for each $i\in\{1,\dots,m\}$, and $A_{1_1}$ fixes $G_1\cup\dots\cup G_{m}$ pointwise. Therefore, $\Gamma_m$ is an $m$-HGR of $G$.
	\end{proof}

	\section{Some small groups}\label{Sec3}
	
	In this section we consider finite groups in the set
	\begin{equation}\label{Eq-3.0}
		\mathcal{G}_0=\{C_1,C_2,C_3,C_4,C_5,C_6,C_2^2,C_2^3,C_3^2,D_6,A_4,3_+^{1+2}\},
	\end{equation}
	where $3_+^{1+2}$ denotes the nonabelian group of order $27$ with exponent $3$.
	Note that this covers all the groups appearing in~(a)--(d) of Theorem~\ref{theo=main}.
	
	\begin{lemma}\label{Lem3.1}
		Let $m\geq3$ be an integer and let $G=C_n$ with $1\leq n\leq6$.
		Then $G$ has no $m$-HGR if and only if one of the following holds:
		\begin{enumerate}[\rm(a)]
			\item $m=3$, and $G=C_1, C_2$, $C_3$,  $C_4$ or $C_5$;
			\item $m=4$, and $G=C_1$, $C_2$ or $C_3$;
			\item $m=5$, and $G=C_1$ or $C_2$;
			\item $6\leq m\leq 9$, and $G=C_1$.
		\end{enumerate}
	\end{lemma}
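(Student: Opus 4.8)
The plan is to prove the two directions separately, in each case peeling off the small values of $m$ from the large ones. The engine for large $m$ is the trio of bootstrapping results Lemma~\ref{Lem3PGSR}, Lemma~\ref{Lem4PGSR} and Lemma~\ref{Lem5PGSR}: a suitable $3$-PGSR produces $m$-HGRs for all odd $m\geq5$, a suitable $4$-PGSR produces them for all even $m\geq6$, and a suitable $5$-PGSR produces them for all odd $m\geq7$. Thus, once a group $G$ is equipped with base PGSRs of the required shapes, the infinitely many values of $m$ collapse to a short finite list of exceptional small $m$ that must be settled by hand. I record at the outset that a regular $m$-PGSR is exactly an $m$-HGR, so producing an $m$-HGR always amounts to exhibiting a regular $m$-Haar graph of $G$ whose automorphism group is no larger than $G$.

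For the existence (``only if'') direction I would proceed group by group through $C_n$ with $2\leq n\leq6$. For each such $G$ I would build one $3$-PGSR meeting the valency and $3$-cycle hypotheses of Lemma~\ref{Lem3PGSR} and one $4$-PGSR meeting those of Lemma~\ref{Lem4PGSR}, which together deliver an $m$-HGR for every $m\geq5$ outside the excluded list. The arithmetic of the hypotheses immediately flags $C_2$ as special: the equalities $|S|+|T|=|L|+|T|$ and $|S|+|L|=k+1$ force $2|S|=k+1$, so no $3$-PGSR of $C_2$ can satisfy Lemma~\ref{Lem3PGSR}, and I would instead supply a $5$-PGSR feeding Lemma~\ref{Lem5PGSR} to recover the odd values $m\geq7$. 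The genuinely small cases $m=3$ (for $C_6$) and $m=4$ (for $C_4$, $C_5$, $C_6$) are not reached by any bootstrapping lemma and must be handled by explicit regular $3$- and $4$-Haar graphs. Finally, for $G=C_1$ and $m\geq10$, an $m$-HGR is nothing but a regular asymmetric graph on $m$ vertices, which exists for all such $m$; I would cite (or exhibit from) a standard infinite family.

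For the non-existence (``if'') direction the task is finite: there are only finitely many pairs $(m,G)$ in cases (a)--(d), each with a group of order at most $6$ and $m\leq9$, hence each $m$-Haar graph in question has at most $54$ vertices. For each pair I would enumerate the regular $m$-Haar graphs of $G$ and verify that every one of them has automorphism group strictly larger than $G$; for the smallest groups this can be argued structurally, and in general it is cleanly certified in \magma. The case $G=C_1$ reduces to the classical fact that there is no regular asymmetric graph on $m$ vertices for $3\leq m\leq9$, which I would invoke directly.

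The main obstacle lies entirely in the existence direction, specifically in the verification that each base PGSR has automorphism group equal to $G$ rather than something larger---the recurring difficulty of every representation problem is the exclusion of unintended automorphisms. This must be achieved while simultaneously honouring the prescribed valencies $k+1$ and $k$ on the designated parts and guaranteeing a $3$-cycle through every vertex, the latter being what lets the bootstrapping lemmas rigidify the long chain of added parts. Because the connection matrices are small and explicit, I expect these verifications to be carried out (and most safely confirmed) by \magma, with the boundary behaviour at $m=3,4,5$ and the anomalous group $C_2$ absorbing most of the case analysis.
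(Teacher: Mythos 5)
Your overall architecture matches the paper's: explicit small Haar graphs certified by \magma{} for the boundary cases, the bootstrapping Lemmas~\ref{Lem3PGSR}--\ref{Lem5PGSR} applied to hand-built base PGSRs for large $m$, and a finite computational check for the non-existence direction. The genuine gap is your treatment of $G=C_2$. You correctly observe that no $3$-PGSR of $C_2$ can satisfy Lemma~\ref{Lem3PGSR} (the hypotheses force $|S|=|L|$, hence $k=2|S|-1$ is odd, while $2\le k\le|G|$ forces $k=2$), but the same kind of obstruction kills the other two lemmas as well. For Lemma~\ref{Lem5PGSR} the five valency equations sum to $3(k+1)+2k=5k+3=2\sum_{i<j}|T_{i,j}|$, so $k$ must again be odd, which is impossible for $C_2$. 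For Lemma~\ref{Lem4PGSR} with $k=2$, a short exhaustion over the connection sets (the valency equations give $|S|=|U|+1$, and the requirement of a triangle through every vertex of $G_3\cup G_4$ then forces $|U|=0$ and $|L|=|K|=|R|=|T|=|S|=1$) shows the only admissible graphs are two disjoint copies of $K_4$ minus an edge, whose automorphism group has order $32$; so no $4$-PGSR of $C_2$ satisfying Lemma~\ref{Lem4PGSR} exists either. Hence your plan produces nothing for $C_2$ and any $m\ge6$. The paper closes exactly this hole by exhibiting explicit $m$-HGRs of $C_2$ for $m\in\{6,7,8,9\}$ found by search, and for $m\ge10$ by taking the disjoint union of two copies of a connected $4$-regular asymmetric graph of order $m$ (Baron--Imrich), whose automorphism group is $\Aut(\Sigma_m)\wr S_2\cong C_2$. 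You need some such construction outside the PGSR framework for this group.

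A secondary concern: for $G=C_3$ and $m=5$ your plan implicitly relies on a $3$-PGSR of $C_3$ satisfying Lemma~\ref{Lem3PGSR}, whose existence you neither exhibit nor argue; the paper instead gives a $5$-HGR of $C_3$ directly and reaches all $m\ge6$ via a $4$-PGSR and a $5$-PGSR of $C_3$. The remaining cases ($C_4$, $C_5$, $C_6$, and $C_1$, as well as the entire non-existence direction) are handled in your proposal essentially as in the paper.
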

	
	\begin{proof}
		First let $G=C_1$. Then a graph $\Gamma$ is an $m$-HGR of $G$ if and only if $\Gamma$ is a regular G$m$SR of $G$. By~\cite[Theorem 1.1]{DFS}, $G$ has a regular G$m$SR if and only if $m\geq 10$. Hence $G$ has an $m$-HGR if and only if $m\geq 10$.\smallskip
		
		Next let $G=\langle x\rangle=C_2$. Searching in \magma~\cite{magma} shows that $G$ has no $m$-HGRs for $m\in\{3,4,5\}$ and has $m$-HGRs $\Cay(G,(T_{i,j})_{m\times m})$ for $m\in\{6,\ldots,9\}$ as follows.
		\begin{itemize}
			\item $m=6$: $T_{1,2}=T_{1,6}=T_{3,5}=T_{4,6}=\{1,x\}$, $T_{1,5}=T_{2,3}=T_{2,5}=T_{3,6}=T_{4,5}=\{1\}$,  $T_{2,4}=T_{3,4}=\{x\}$ and $T_{i,j}=\emptyset$ otherwise;
			\item $m=7$: $T_{1,2}=\{1,x\}$, $T_{1,3}=T_{1,7}=T_{2,3}=T_{4,7}=T_{5,6}=T_{5,7}=T_{6,7}=\{x\}$, $T_{2,6}=T_{3,4}=T_{3,5}=T_{4,5}=T_{4,6}=\{1\}$ and $T_{i,j}=\emptyset$ otherwise;
			\item $m=8$: $T_{1,2}=\{1,x\}$, $T_{1,7}=T_{1,8}=T_{3,4}=T_{3,8}=T_{4,5}=T_{4,6}=T_{5,8}=\{1\}$, $T_{2,3}=T_{2,6}=T_{3,5}=T_{4,7}=T_{5,6}=T_{6,7}=T_{7,8}=\{x\}$ and $T_{i,j}=\emptyset$ otherwise;
			\item $m=9$: $T_{1,2}=T_{8,9}=\{1,x\}$, $T_{1,7}=T_{1,9}=T_{3,4}=T_{3,8}=T_{4,5}=T_{4,6}=T_{5,9}=\{1\}$, $T_{2,3}=T_{2,6}=T_{3,5}=T_{4,7}=T_{5,6}=T_{6,7}=T_{7,8}=\{x\}$ and $T_{i,j}=\emptyset$ otherwise.
		\end{itemize}
		For each $m\ge 10$, according to a result of Baron and Imrich~\cite{BI}, there exists a connected $4$-regular asymmetric graph $\Sigma_m$ of order $m$. Let $\Gamma_m$ be a disconnected union of two copies of $\Sigma_m$. Then $\Aut(\Gamma_m)=\Aut(\Sigma_m)\wr S_2\cong C_2$, and so $\Gamma_m$ is an $m$-HGR of $G$. This proves that $G$ has an $m$-HGR for each $m\geq 10$.
		
		Now let $G=\langle x\rangle =C_3$. Searching in \magma~\cite{magma} shows that $G$ has no $3$-HGR or $4$-HGR but has a $5$-HGR $\Cay(G,(T_{i,j})_{5\times5})$ with $T_{1,2}=\{1,x\}$,  $T_{1,3}=T_{4,5}=\{1,x^{-1}\}$, $T_{2,4}=\{1\}$, $T_{2,5}=T_{3,4}=T_{3,5}=\{x\}$ and $T_{i,j}=\emptyset$ otherwise. Let $\Sigma_4=\Cay(G,(T_{i,j})_{4\times4})$ with
		\[
		T_{1,2}=T_{1,3}=T_{1,4}=T_{2,3}=\{1,x\},\ \ T_{2,4}=\{x,x^{-1}\}, \ \  T_{3,4}=\{x\},
		\]
		and let $\Sigma_5=\Cay(G,(T_{i,j})_{5\times5})$ with
		\[
		T_{1,2}=T_{1,3}=T_{2,3}=T_{4,5}=\{1,x\},\ \ T_{1,4}=T_{1,5}=\{1\},\  \ T_{2,4}=T_{3,4}=\{x^{-1}\},\ \ T_{2,5}=T_{3,5}=\{x\}.
		\]
		Clearly, $\Sigma_4$ satisfies conditions (a)--(c) of Lemma~\ref{Lem4PGSR} and $\Sigma_5$ satisfies conditions (a)--(c) of Lemma~\ref{Lem5PGSR}. Moreover, it is verified by \magma~\cite{magma} that $\Sigma_4$ is a $4$-PGSR and $\Sigma_5$ is a $5$-PGSR of $G$. Then we derive from Lemmas~\ref{Lem4PGSR} and~\ref{Lem5PGSR} that $G$ has an $m$-HGR for each $m\geq6$.
		
		Finally, let $G=\langle x\rangle =C_n$ with $4\leq n\leq 6$. Searching in \magma~\cite{magma} shows that neither $C_4$ nor $C_5$ has a $3$-HGR, and that $G$ has the following $m$-HGRs $\Cay(G,(T_{i,j})_{m\times m})$ with $m\in\{3,4\}$.
		\begin{itemize}
			\item $m=3$ and $G=C_6$: $T_{1,2}=\{1,x^3\}$, $T_{1,3}=\{1,x^{-1}\}$, $T_{2,3}=\{x,x^{-1}\}$;
			\item $m=4$ and $G=C_n$ with $4\leq n\leq 6$: $T_{1,2}=T_{1,3}=\{1,x\}$, $T_{1,4}=\{1\}$, $T_{2,3}=\{x\}$, $T_{2,4}=T_{3,4}=\{x,x^{-1}\}$.
		\end{itemize}
		Let $\Sigma_3=\Cay(G,(T_{i,j})_{3\times3})$ with
		\[
		T_{1,2}=\{1,x\},\ \ T_{1,3}=\{x,x^{-1}\},\ \ T_{2,3}=\{1\},
		\]
		and let $\Sigma_4=\Cay(G,(T_{i,j})_{4\times4})$ with
		\[
		T_{1,2}=T_{1,3}=\{1,x\},\ \ T_{1,4}=\{1\},\ \ T_{2,3}=\{x\},\ \ T_{2,4}=\{x,x^2\},\ \ T_{3,4}=\{x^{-1}\}.
		\]
		It is clear that $\Sigma_3$ satisfies conditions (a)--(c) of Lemma~\ref{Lem3PGSR} and $\Sigma_4$ satisfies conditions (a)--(c) of Lemma~\ref{Lem4PGSR}. By \magma~\cite{magma} one may verify that $\Sigma_3$ is a $3$-PGSR and $\Sigma_4$ is a $4$-PGSR of $G$. Then it follows from Lemmas~\ref{Lem3PGSR} and~\ref{Lem4PGSR} that $G$ has an $m$-HGR for each $m\geq 5$.
	\end{proof}
	
	\begin{lemma}\label{Lem3.2}
		Let $m\geq 3$ be an integer and let $G=C_2^2$ or $C_2^3$. Then $G$ has no $m$-HGR if and only if $m=3$ and $G=C_2^2$.
	\end{lemma}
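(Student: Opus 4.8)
The plan is to handle the two groups in turn, following the template of Lemma~\ref{Lem3.1}: dispose of the small cases $m\in\{3,4\}$ by hand (via explicit connection matrices checked in \magma), and then bootstrap to every $m\geq5$ by exhibiting one $3$-PGSR and one $4$-PGSR meeting the hypotheses of Lemmas~\ref{Lem3PGSR} and~\ref{Lem4PGSR}. Recall that Lemma~\ref{Lem3PGSR} upgrades a suitable $3$-PGSR to $m$-HGRs for all odd $m\geq5$ and Lemma~\ref{Lem4PGSR} upgrades a suitable $4$-PGSR to $m$-HGRs for all even $m\geq6$, so these two ingredients together cover all $m\geq5$; only $m=3$ and $m=4$ need to be settled separately for each group.

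For $G=C_2^2$ the one genuinely negative assertion is that $G$ has no $3$-HGR. Writing a prospective $3$-Haar graph as $\Cay(G,(T_{i,j})_{3\times3})$ with $T_{1,2}=S$, $T_{1,3}=L$, $T_{2,3}=T$, regularity forces $|S|=|L|=|T|$; since $|G|=4$ there are only finitely many such matrices, so I would verify by exhaustive search in \magma{} that none of them produces a graph with automorphism group isomorphic to $G$. For $m=4$ I would record an explicit $4$-HGR found by \magma. For $m\geq5$ I would write down candidate matrices $\Sigma_3=\Cay(G,(T_{i,j})_{3\times3})$ and $\Sigma_4=\Cay(G,(T_{i,j})_{4\times4})$, choosing the sizes of the connection sets to meet the valency conditions (a)--(b) of the two lemmas and arranging that a triangle exists (condition~(c)): recalling that $x_i$ and $y_j$ are adjacent precisely when $yx^{-1}\in T_{i,j}$, a triangle on $G_1,G_2,G_3$ amounts to $T_{2,3}\,T_{1,2}\cap T_{1,3}\neq\emptyset$, which is immediate once $1\in S\cap L\cap T$. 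A final \magma{} check that $\Sigma_3$ and $\Sigma_4$ are indeed PGSRs lets me apply Lemmas~\ref{Lem3PGSR} and~\ref{Lem4PGSR} to obtain $m$-HGRs for all $m\geq5$, completing the case $m\geq4$.

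For $G=C_2^3$ the argument is purely constructive, and the extra room afforded by $|G|=8$ makes every step routine. Here I would exhibit explicit $3$-HGRs and $4$-HGRs via \magma, and then produce a $3$-PGSR and a $4$-PGSR satisfying conditions (a)--(c) of Lemmas~\ref{Lem3PGSR} and~\ref{Lem4PGSR}, again confirming the PGSR property by computer; this yields $m$-HGRs for $m=3$, $m=4$, all odd $m\geq5$ and all even $m\geq6$, that is, for every $m\geq3$, so $C_2^3$ is never an exception. The main obstacle is the lone non-existence statement for $C_2^2$ at $m=3$: in every positive case the three bootstrapping lemmas reduce the problem to verifying finitely many small explicit examples, whereas ruling out a $3$-HGR of $C_2^2$ requires certifying that \emph{no} connection matrix succeeds. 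Conceptually this reflects the fact that for so small and symmetric a group, with every nonidentity element an involution and $\Aut(C_2^2)\cong S_3$, three connection sets over four points cannot simultaneously rigidify the induced graph and break the part-permuting and $\Aut(G)$-induced symmetries; the cleanest rigorous route to this is the exhaustive finite check.
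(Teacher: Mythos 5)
Your overall architecture (explicit \magma{} examples for the small $m$, then bootstrapping via the PGSR lemmas) is the paper's, and your treatment of $C_2^3$ coincides with it exactly. But for $G=C_2^2$ your plan for odd $m\geq5$ breaks down: no $3$-PGSR of $C_2^2$ satisfying conditions (a)--(c) of Lemma~\ref{Lem3PGSR} exists, so the object $\Sigma_3$ you intend to feed into that lemma cannot be produced, no matter how the \magma{} search is organized. Indeed, writing $T_{1,2}=S$, $T_{1,3}=L$, $T_{2,3}=T$, conditions (a) and (b) force $|S|+|L|=k+1$ and $|S|+|T|=|L|+|T|=k$, hence $|S|=|L|$ and $k=2|S|-1$; since $2\leq k\leq|G|=4$ this pins down $|S|=|L|=2$ and $|T|=1$. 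After translating so that $1\in S$ and $1\in L$, write $S=\{1,a\}$, $L=\{1,b\}$, $T=\{t\}$. If $a=b$, the map fixing $G_1$ pointwise and sending $g_2\mapsto g_3$, $g_3\mapsto g_2$ is an automorphism (using $t=t^{-1}$). If $a\neq b$, take $\alpha\in\Aut(C_2^2)$ swapping $a$ and $b$; when $t^\alpha=t$ (i.e.\ $t\in\{1,ab\}$) the map $g_1\mapsto(g^\alpha)_1$, $g_2\mapsto(g^\alpha)_3$, $g_3\mapsto(g^\alpha)_2$ is an extra automorphism, and in the remaining cases $t\in\{a,b\}$ one checks directly (the graph has only $12$ vertices) that a part-swapping automorphism still exists. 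So every candidate has automorphism group strictly larger than $C_2^2$, and Lemma~\ref{Lem3PGSR} is unavailable for this group. Your aside that condition (c) ``is immediate once $1\in S\cap L\cap T$'' lands you squarely in the worst subcase $T=\{1\}$.

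This is precisely why the paper's proof of this lemma takes a different route for $C_2^2$: it records an explicit $5$-HGR to settle $m=5$, and then uses a $5$-PGSR together with Lemma~\ref{Lem5PGSR} (not a $3$-PGSR with Lemma~\ref{Lem3PGSR}) to obtain the odd $m\geq7$, keeping Lemma~\ref{Lem4PGSR} for the even $m\geq6$. (The same phenomenon already occurs for $C_3$ in Lemma~\ref{Lem3.1}, which is the reason Lemma~\ref{Lem5PGSR} exists at all.) To repair your argument you must replace the $\Sigma_3$-step for $C_2^2$ by an explicit $5$-HGR plus a $5$-PGSR satisfying the hypotheses of Lemma~\ref{Lem5PGSR}, or some equivalent device; everything else in your proposal, including the exhaustive non-existence check for a $3$-HGR of $C_2^2$ and the whole of the $C_2^3$ case, is sound and matches the paper.
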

	
	\begin{proof}
		First let $G=\langle x,y\mid x^2=y^2=(xy)^2=1\rangle=C_2^2$. Search in \magma~\cite{magma} shows that $G$ has no $3$-HGR but has a $4$-HGR $\Cay(G,(T_{i,j})_{4\times4})$ with
		\[
		T_{1,2}=T_{2,3}=T_{3,4}=\{1,x\},\ \ T_{1,3}=\{x\},\ \ T_{1,4}=\{x,y\},\ \ T_{2,4}=\{y\}
		\]
		and a $5$-HGR $\Cay(G,(T_{i,j})_{5\times5})$ with
		\[
		T_{1,2}=\{1,x\},\ \ T_{1,3}=\{x,y\},\  \ T_{2,3}=T_{3,5}=\{1\},\ \ T_{2,4}=\{y\}, \ \ T_{4,5}=\{1,x,y\}
		\]
		and $T_{i,j}=\emptyset$ otherwise. Let $\Sigma_4=\Cay(G,(T_{i,j})_{4\times4})$ with
		\[
		T_{1,2}=T_{2,3}=\{1,x\},\ \ T_{1,3}=T_{3,4}=\{x\},\ \ T_{1,4}=\{x,y\},\ \ T_{2,4}=\{y\},
		\]
		and let $\Sigma_5=\Cay(G,(T_{i,j})_{5\times5})$ with
		\[
		T_{1,2}=T_{1,3}=T_{4,5}=\{1,x,y\},\ \ T_{2,3}=T_{2,4}=\{x\},\  \ T_{2,5}=\{xy\},\ \ T_{3,4}=T_{3,5}=\{y\},
		\ \ T_{1,4}=T_{1,5}=\emptyset.
		\]
		Clearly, $\Sigma_4$ satisfies conditions (a)--(c) of Lemma~\ref{Lem4PGSR} and $\Sigma_5$ satisfies conditions (a)--(c) of Lemma~\ref{Lem5PGSR}. Moreover, computation in \magma~\cite{magma} shows that $\Sigma_4$ is a $4$-PGSR and $\Sigma_5$ is a $5$-PGSR of $G$. Then it follows from Lemmas~\ref{Lem4PGSR} and~\ref{Lem5PGSR} that $G$ has an $m$-HGR for each $m\geq6$.
		
		Next let $G=\langle x,y,z~|~x^2=y^2=z^2=(xy)^2=(xz)^2=(yz)^2=1\rangle= C_2^3$.
		Computation in \magma~\cite{magma} verifies that $\Cay(G,(T_{i,j})_{3\times3})$ with
		\[
		T_{1,2}=\{1,x,z,xy\},\ \ T_{1,3}=\{z,xy,xz,xyz\},\ \ T_{2,3}=\{y,z,xy,xz\}
		\]
		is a $3$-HGR of $G$, and $\Cay(G,(T_{i,j})_{4\times4})$ with
		\[
		T_{1,2}=\{1,x\},\ \ T_{1,3}=\{x,z\},\ \ T_{1,4}=T_{2,3}=\{x\}, \ \ T_{2,4}=\{x,y\},\ \ T_{3,4}=\{x,z\}
		\]
		is a $4$-HGR of $G$. Let $\Sigma_3=\Cay(G,(T_{i,j})_{3\times3})$ with
		\[
		T_{1,2}=\{1,x,y\},\ \ T_{1,3}=\{1,xz,xyz\},\ \ T_{2,3}=\{xz,yz\},
		\]
		and let $\Sigma_4=\Cay(G,(T_{i,j})_{4\times4})$ with
		\[
		T_{1,2}=\{1,x\},\ \ T_{1,3}=\{x,z\},\ \ T_{1,4}=T_{2,3}=\{x\}, \ \ T_{2,4}=\{x,y\},\ \ T_{3,4}=\{y\}.
		\]
		Clearly, $\Sigma_3$ satisfies conditions (a)--(c) of Lemma~\ref{Lem3PGSR} and $\Sigma_4$ satisfies conditions (a)--(c) of Lemma~\ref{Lem4PGSR}. Moreover, it is verified in \magma~\cite{magma} that $\Sigma_3$ is a $3$-PGSR and $\Sigma_4$ is a $4$-PGSR of $G$. Hence Lemmas~\ref{Lem3PGSR} and~\ref{Lem4PGSR} imply that $G$ has an $m$-HGR for each $m\geq 5$.
	\end{proof}
	
	\begin{lemma}\label{Lem3.3}
		Let $m\geq 3$ be an integer and let $G\in\{C_3^2,D_6,A_4,3_+^{1+2}\}$. Then $G$ has no $m$-HGR if and only if $m=3$ and $G=D_6$.
	\end{lemma}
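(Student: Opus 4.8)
The plan is to follow the template already established in the proofs of Lemma~\ref{Lem3.1} and Lemma~\ref{Lem3.2}: settle the two small cases $m=3$ and $m=4$ by hand, via explicit connection matrices verified in \magma~\cite{magma}, and then dispatch the entire range $m\geq5$ at once by producing, for each of the four groups, a single $3$-PGSR and a single $4$-PGSR of the shape demanded by Lemma~\ref{Lem3PGSR} and Lemma~\ref{Lem4PGSR}. Since Lemma~\ref{Lem3PGSR} accounts for every odd $m\geq5$ and Lemma~\ref{Lem4PGSR} for every even $m\geq6$, together they cover all $m\geq5$, leaving only $m\in\{3,4\}$ to be treated group by group.

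For the existence direction I would handle $C_3^2$, $A_4$ and $3_+^{1+2}$ uniformly: for each such $G$ I exhibit an explicit $3$-Haar graph $\Cay(G,(T_{i,j})_{3\times3})$ and an explicit $4$-Haar graph $\Cay(G,(T_{i,j})_{4\times4})$ and confirm in \magma~\cite{magma} that both have automorphism group isomorphic to $G$, thereby producing a $3$-HGR and a $4$-HGR. For $G=D_6$ I would display a $4$-HGR in the same way, but for $m=3$ I must instead certify the \emph{nonexistence} of a $3$-HGR, by verifying in \magma~\cite{magma} that no connection matrix $(T_{i,j})_{3\times3}$ with empty diagonal yields a regular graph whose automorphism group is isomorphic to $D_6$. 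This is precisely the point where the two alternatives of the statement part company.

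For $m\geq5$ I would, for each of the four groups, write down a non-regular $3$-PGSR $\Sigma_3$ in which $(g,1)$ has valency $k+1$ while $(g,2)$ and $(g,3)$ have valency $k$, together with a $4$-PGSR $\Sigma_4$ realizing the valency pattern of Lemma~\ref{Lem4PGSR}; in each case the connection sets are arranged so that every vertex lies on a $3$-cycle, and the property that the automorphism group is isomorphic to $G$ is checked in \magma~\cite{magma}. Feeding $\Sigma_3$ into Lemma~\ref{Lem3PGSR} then gives an $m$-HGR for every odd $m\geq5$, and feeding $\Sigma_4$ into Lemma~\ref{Lem4PGSR} gives one for every even $m\geq6$, which finishes the range $m\geq5$. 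I emphasize that for $D_6$ the deliberate valency gap between the first part and the other two is indispensable: a \emph{regular} $3$-PGSR would be a $3$-HGR, which $D_6$ lacks, so only the non-regular $\Sigma_3$ is available, and it is exactly this that Lemma~\ref{Lem3PGSR} is built to exploit. Should a suitable $3$-PGSR turn out to be unavailable for some group, one can instead invoke Lemma~\ref{Lem5PGSR} with a $5$-PGSR to cover odd $m\geq7$ and supply an explicit $m=5$ HGR separately, exactly as was done for $C_3$ in Lemma~\ref{Lem3.1}.

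The hard part is not the reduction but the constructions themselves: for each group one must find connection matrices simultaneously meeting the numerical valency constraints of the relevant lemma, the $3$-cycle condition, and, most delicately, the rigidity requirement that no automorphism beyond $R_m(G)$ survives. This rigidity is what makes a matrix a genuine PGSR (or HGR) rather than merely a convenient-looking $m$-Cayley graph, and it is the property whose verification ultimately rests on \magma~\cite{magma}. For $D_6$ at $m=3$ the obstacle is of the opposite flavour: rather than displaying one good matrix, one must certify that an exhaustive, though finite, search over all regular $3$-Haar graphs of $D_6$ returns none with automorphism group isomorphic to $D_6$.
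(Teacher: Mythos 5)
Your proposal is correct and follows essentially the same route as the paper: Magma certifies the nonexistence of a $3$-HGR for $D_6$ and supplies explicit $3$-HGRs ($C_3^2$, $A_4$, $3_+^{1+2}$) and $4$-HGRs (all four groups), while a non-regular $3$-PGSR $\Sigma_3$ and a $4$-PGSR $\Sigma_4$ fed into Lemmas~\ref{Lem3PGSR} and~\ref{Lem4PGSR} cover all $m\geq5$. The only cosmetic difference is that the paper writes a single pair of connection matrices in terms of standard generators $x,y$ that works uniformly for all four groups, rather than constructing them group by group.
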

	
	\begin{proof}
		Let $x$ and $y$ be elements of $G$ such that $G=\langle x,y\mid x^3=y^3=1,\,xy=yx\rangle$ if $G=C_3^2$, $G=\langle x,y\mid x^3=y^2=(xy)^2=1\rangle$ if $G=D_6$,  $G=\langle x,y\mid x^3=y^2=(xy)^3=1\rangle$ if $G=A_4$, and $G=\langle x,y,z\mid x^3=y^3=z^3=[x,z]=[y,z]=1,\, [x,y]=z\rangle$ if $G=3_+^{1+2}$. Searching in \magma~\cite{magma} shows that the $D_6$ has no 3-HGR, and $G$ has $m$-HGRs $\Cay(G,(T_{i,j})_{m\times m})$ for $m\in\{3,4\}$ as follows.
		\begin{itemize}
			\item $m=3$ and $G=C_3^2$, $A_4$ or $3_+^{1+2}$: $T_{1,2}=\{1,x,y\}$, $T_{1,3}=\{1,x,xy\}$, $T_{2,3}=\{1,x^{-1},yx\}$;
			\item $m=4$ and $G=C_3^2$, $D_6$, $A_4$ or $3_+^{1+2}$: $T_{1,2}=\{1,y\}$, $T_{1,3}=T_{2,4}=\{1,x\}$, $T_{1,4}=T_{2,3}=\{1\}$, $T_{3,4}=\{x,y\}$.
		\end{itemize}
		Let $\Sigma_3=\Cay(G,(T_{i,j})_{3\times3})$ with
		\[
		T_{1,2}=\{1,x,y\},\ \ T_{1,3}=\{1,x,xy\},\ \ T_{2,3}=\{1,yx\},
		\]
		and let $\Sigma_4=\Cay(G,(T_{i,j})_{4\times4})$ with
		\[
		T_{1,2}=\{1,y\},\ \ T_{1,3}=T_{2,4}=\{1,x\},\ \ T_{1,4}=T_{2,3}=\{1\},\ \  T_{3,4}=\{x\}.
		\]
		It is evident that $\Sigma_3$ satisfies conditions (a)--(c) of Lemma~\ref{Lem3PGSR} and $\Sigma_4$ satisfies conditions (a)--(c) of Lemma~\ref{Lem4PGSR}. By \magma~\cite{magma} one may verify that $\Sigma_3$ is a $3$-PGSR and $\Sigma_4$ is a $4$-PGSR of $G$. Then it follows from Lemmas~\ref{Lem3PGSR} and~\ref{Lem4PGSR} that $G$ has an $m$-HGR for each $m\geq 5$.
	\end{proof}
	
	A combination of Lemmas~\ref{Lem3.1},~\ref{Lem3.2} and~\ref{Lem3.3} leads to the following result.
	
	\begin{proposition}\label{prop=smallgroups}
		Let $m\geq 3$ be an integer and let $G\in\mathcal{G}_0$.
		Then $G$ has no $m$-HGR if and only if one of the following holds:
		\begin{enumerate}[\rm(a)]
			\item $m=3$, and $G=C_1, C_2$, $C_3$,  $C_4$, $C_5$, $C_2^2$ or $D_6$.
			\item $m=4$, and $G=C_1$, $C_2$ or $C_3$.
			\item $m=5$, and $G=C_1$ or $C_2$.
			\item $6\leq m\leq 9$, and $G=C_1$.
		\end{enumerate}
	\end{proposition}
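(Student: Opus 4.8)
The plan is to observe that $\mathcal{G}_0$, as defined in~\eqref{Eq-3.0}, is precisely the disjoint union of the three families of groups treated in Lemmas~\ref{Lem3.1},~\ref{Lem3.2} and~\ref{Lem3.3}, and then to merge their respective lists of exceptions. Concretely, I would first check the disjoint decomposition
\[
\mathcal{G}_0 = \{C_n : 1\leq n\leq 6\} \,\cup\, \{C_2^2,\, C_2^3\} \,\cup\, \{C_3^2,\, D_6,\, A_4,\, 3_+^{1+2}\},
\]
whose three parts are exactly the hypotheses of Lemmas~\ref{Lem3.1},~\ref{Lem3.2} and~\ref{Lem3.3}. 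Thus every $G\in\mathcal{G}_0$ is governed by exactly one of these lemmas. Since each lemma is an \emph{if and only if} characterization of the pairs $(G,m)$ admitting no $m$-HGR, both directions of the proposition will follow at once provided the union of the three characterizations coincides, pair-by-pair, with the list in items (a)--(d).

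The remaining work is therefore a bookkeeping verification, which I would organize by collecting the exceptional pairs $(G,m)$ contributed by each lemma. Lemma~\ref{Lem3.1} contributes precisely the cyclic exceptions recorded in its own items (a)--(d), namely $C_1,\dots,C_5$ at $m=3$, then $C_1,C_2,C_3$ at $m=4$, then $C_1,C_2$ at $m=5$, and finally $C_1$ for $6\leq m\leq 9$; in particular $C_6$ is never an exception. Lemma~\ref{Lem3.2} contributes the single pair $(C_2^2, 3)$ and nothing else, and Lemma~\ref{Lem3.3} contributes the single pair $(D_6, 3)$ and nothing else. I would then aggregate these by the value of $m$: at $m=3$ the five cyclic exceptions from Lemma~\ref{Lem3.1} combine with $C_2^2$ and $D_6$ to give exactly the seven groups of item (a); for $m=4$, $m=5$ and $6\leq m\leq 9$ only Lemma~\ref{Lem3.1} contributes, yielding items (b), (c) and (d); and for $m\geq 10$ no group in $\mathcal{G}_0$ is exceptional.

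Since there is no mathematical content beyond assembling three already-established classifications, the only point demanding care---and hence the main (if modest) obstacle---is the completeness and accuracy of this aggregation: one must confirm that the disjoint union above exhausts $\mathcal{G}_0$, that no exceptional pair is double-counted or dropped when reindexing by $m$, and in particular that the two sporadic exceptions $(C_2^2,3)$ and $(D_6,3)$ land at the same level $m=3$, so that item (a) acquires exactly seven groups rather than five. Once this is verified, the equality of the aggregate exception set with items (a)--(d), combined with the iff nature of the three lemmas, immediately yields both directions of the proposition.
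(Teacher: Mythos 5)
Your proposal is correct and is essentially identical to the paper's argument: the paper derives Proposition~\ref{prop=smallgroups} as ``a combination of Lemmas~\ref{Lem3.1},~\ref{Lem3.2} and~\ref{Lem3.3}'', which is exactly the decomposition of $\mathcal{G}_0$ and aggregation of exceptional pairs you describe. No further comment is needed.
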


	\section{Groups with a small generating set}\label{Sec4}
	
	This section is devoted to the groups $G$ that has a generating set of at most three elements but is not in $\mathcal{G}_0$ (see~\eqref{Eq-3.0} for the definition of $\mathcal{G}_0$).

\begin{lemma}\label{Lem=5.0}
	Let $G$ be a finite group with $d(G)=2$. Then either $G$ has a generating set $\{x,y\}$ with $|x|\geq4$, or $G\in\{C_2^2,C_3^2,D_6,A_4,3_+^{1+2}\}$.
\end{lemma}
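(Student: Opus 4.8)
The plan is to prove the contrapositive of the first alternative: assuming $G$ has \emph{no} generating set $\{x,y\}$ with $|x|\geq4$, I will show $G\in\{C_2^2,C_3^2,D_6,A_4,3_+^{1+2}\}$. Since a generating pair is unordered, this hypothesis says precisely that no generating pair of $G$ contains an element of order $\geq4$; as $d(G)=2$ rules out the identity from any generating pair, every generating pair $\{g,h\}$ then has $|g|,|h|\in\{2,3\}$. The engine of the argument is the trivial remark that if $\langle g,h\rangle=G$ then $\{g,h\}$ is a generating pair, so $|g|,|h|\leq3$; hence, whenever I replace one member of a known generating pair by a short word that still generates together with the other member, that word is forced to have order at most $3$. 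Fixing one generating pair $\{x,y\}$ (which exists since $d(G)=2$), I split into the cases $(|x|,|y|)=(2,2),(2,3),(3,3)$.

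For $(2,2)$, set $r=xy$; from $xr=y$ we get $\langle x,r\rangle=G$, so $|r|\in\{2,3\}$ (the value $1$ being excluded by $d(G)=2$), and since $x$ inverts $r$ the group $G=\langle r\rangle\rtimes\langle x\rangle$ is dihedral of order $2|r|$, giving $G\in\{C_2^2,D_6\}$. For $(2,3)$, again set $r=xy$; then $\langle r,y\rangle=G$ forces $|r|\in\{2,3\}$. If $|r|=2$ then $(xy)^2=1$ makes $x$ invert $y$, so $G=D_6$; if $|r|=3$ then $x^2=y^3=(xy)^3=1$ exhibits $G$ as a quotient of the von Dyck group $\langle x,y\mid x^2=y^3=(xy)^3=1\rangle\cong A_4$, which being of even order must be $A_4$.

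The case $(3,3)$ is the heart of the matter. I consider the two words $r=xy$ and $s=xy^{-1}$; since $\langle r,y\rangle=\langle s,y\rangle=G$ and both $r,s\neq1$, each of $|r|,|s|$ lies in $\{2,3\}$. If $|r|=2$ or $|s|=2$, then $\{r,y\}$ or $\{s,y\}$ is a generating pair of orders $(2,3)$, so the previous case gives $G\in\{D_6,A_4\}$; but $G$ is generated by the two order-$3$ elements $x,y$, whereas the order-$3$ elements of $D_6$ all lie in its unique subgroup $C_3$, so $G\neq D_6$ and hence $G=A_4$. If instead $|r|=|s|=3$, then $x^3=y^3=(xy)^3=(xy^{-1})^3=1$, so $G$ is a quotient of $B=\langle x,y\mid x^3=y^3=(xy)^3=(xy^{-1})^3=1\rangle$. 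This group is the free Burnside group of exponent $3$ on two generators, of order $27$ and isomorphic to $3_+^{1+2}$; its quotients with $d=2$ are exactly $C_3^2$ and $3_+^{1+2}$. Collecting all cases yields $G\in\{C_2^2,C_3^2,D_6,A_4,3_+^{1+2}\}$.

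The main obstacle is the last case, and the delicate point there is finiteness. The relations $x^3=y^3=(xy)^3=1$ by themselves present the \emph{infinite} Euclidean triangle group $\Delta(3,3,3)$, so no bound on $|G|$ is automatic; it is exactly the extra relation $(xy^{-1})^3=1$, supplied by the hypothesis through the generating pair $\{s,y\}$, that collapses $B$ to the order-$27$ group. I would record the identification $B\cong 3_+^{1+2}$ (equivalently, that these relations force $[x,y]$ to be central of order $3$, so that $B$ is extraspecial of exponent $3$) as the one genuinely structural input. Everything else reduces to the repeated order-bounding trick together with the standard facts that the $(2,3,3)$ von Dyck group is $A_4$ and that $D_6$ cannot be generated by two elements of order $3$.
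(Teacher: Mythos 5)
Your proof is correct and follows essentially the same route as the paper's: bound the orders of $x$, $y$, $xy$ and $xy^{-1}$ (the paper uses $a^2b=a^{-1}b$ in place of $xy^{-1}$, which has the same order) via the generating-pair trick, then identify the resulting groups. The paper dispatches the identifications with ``it is easy to see''; you supply the details, correctly isolating the presentation $\langle x,y\mid x^3=y^3=(xy)^3=(xy^{-1})^3=1\rangle\cong 3_+^{1+2}$ as the one genuinely nontrivial input.
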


\begin{proof}
	Suppose that $G$ has no a generating set $\{x,y\}$ with $|x|\geq4$. Take an arbitrary generating set $\{a,b\}$ of size two in $G$. Then since $G=\langle a,b\rangle=\langle a,ab\rangle=\langle ab,b\rangle$, it follows that $|a|,|b|,|ab|\in\{2,3\}$. If at least one of $|a|$, $|b|$ or $|ab|$ is $2$, then it is easy to see that $G\in\{C_2^2,D_6,A_4\}$. Now assume that $|a|=|b|=|ab|=3$. Then since $G=\langle a,a^2b\rangle$, we have $|a^2b|\in\{2,3\}$. If $|a^2b|=2$, then $G=A_4$. If $|a^2b|=3$, then $G=C_3^2$ or $3_+^{1+2}$.
\end{proof}

The following lemma will be applied several
times in this section.
	
		\begin{lemma}\label{Lem=stabilizertrivial}
		Let $m\geq 2$ be an integer, let $G$ be a finite group, and let $\Gamma=\Cay(G,(T_{i,j})_{m\times m})$ be an $m$-Cayley graph of $G$
		such that $\left\langle   T_{i,j}\mid i,j\in \{1,\dots,m\}\right\rangle =G$. Suppose that $A:=\Aut(\Gamma)$  stabilizes $G_1$ and $A_{1_1}=A_{1_i}$ fixes $\Gamma(1_i)$ pointwise for each $i\in \{1,\dots,m\}$. Then $A=G$.
	\end{lemma}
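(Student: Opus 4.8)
The plan is to prove that the point stabilizer $H:=A_{1_1}$ is trivial, from which the conclusion follows immediately. First I would record the reduction: by hypothesis $A$ stabilizes $G_1$, and $A$ already contains $R_m(G)$, which is regular on $G_1$; hence the $A$-orbit of $1_1$ is exactly $G_1$, so that $|A|=|H|\,|G|$. Thus once I show $H=1$, I get $|A|=|G|=|R_m(G)|$ and therefore $A=R_m(G)=G$, as desired. The only facts I intend to use about $H$ are the two supplied by the hypothesis: that $A_{1_i}=H$ for every $i\in\{1,\dots,m\}$, and that $H$ fixes each neighbourhood $\Gamma(1_i)$ pointwise.

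The heart of the matter is to show that $R_m(G)$ normalizes $H$. Here I would fix indices $i,j$ and an element $t\in T_{i,j}$, so that $t_j$ is a neighbour of $1_i$; concretely $t_j=(1_j)^{R_m(t)}$, since $R_m(t)$ acts by $x_i\mapsto(xt)_i$. On the one hand $H$ fixes $t_j$ because $t_j\in\Gamma(1_i)$ and $H$ fixes $\Gamma(1_i)$ pointwise. On the other hand, conjugating the stabilizer of $1_j$ gives $A_{t_j}=A_{1_j}^{R_m(t)}=H^{R_m(t)}$, using $A_{1_j}=H$. Combining these, $H\leq H^{R_m(t)}$, and since conjugate subgroups of a finite group have equal order this forces $H=H^{R_m(t)}$; that is, $R_m(t)$ normalizes $H$. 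As $t$ runs over all the sets $T_{i,j}$, these elements generate $G$ by hypothesis, so $R_m(G)=\langle R_m(t)\mid t\in T_{i,j},\ i,j\in\{1,\dots,m\}\rangle$ normalizes $H$.

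With normality secured, I would finish as follows. For any $g\in G$ and any $i$, the vertex $g_i$ equals $(1_i)^{R_m(g)}$, so its stabilizer is $A_{g_i}=A_{1_i}^{R_m(g)}=H^{R_m(g)}=H$, where the last equality is exactly normalization of $H$ by $R_m(g)$. In particular every element of $H$ fixes $g_i$, and as $g$ and $i$ were arbitrary, $H$ fixes every vertex of $\Gamma$. Hence $H=1$, and by the reduction above $A=R_m(G)=G$.

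The step I expect to be the crux is the normalization of $H$ by $R_m(G)$ in the second paragraph: the real content of the lemma is the observation that fixing each neighbourhood $\Gamma(1_i)$ pointwise is precisely what converts the inclusion $H\leq A_{t_j}=H^{R_m(t)}$ into the equality $H=H^{R_m(t)}$. Everything before and after this is bookkeeping with orbits and stabilizers, so no further obstacle is anticipated; one only needs to be careful that the generating hypothesis $\langle T_{i,j}\rangle=G$ is invoked to pass from individual generators $R_m(t)$ to all of $R_m(G)$.
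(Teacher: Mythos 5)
Your proof is correct and takes essentially the same route as the paper's: both hinge on the observation that pointwise fixing of $\Gamma(1_i)$ gives $A_{1_i}\leq A_{t_j}=A_{1_j}^{R_m(t)}$ for $t\in T_{i,j}$, and then propagate this through the generating hypothesis to show $A_{1_1}$ fixes every vertex. The only cosmetic difference is that you upgrade each inclusion to an equality via finiteness and package the propagation as ``$R_m(G)$ normalizes $A_{1_1}$,'' whereas the paper keeps the inclusions $A_{g_i}\leq A_{(xg)_j}$ and chains them along words in the sets $T_{i,j}$.
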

	
	\begin{proof}
			For $i,j\in \{1,\dots,m\}$, since $A_{1_i}$ fixes $\Gamma(1_i)$ pointwise, we have $A_{1_i}\leq A_{x_j}$ for each $x\in T_{i,j}$ and so $A_{g_i}\leq A_{(xg)_j}$ for each $g\in G$ and $x\in T_{i,j}$.  From $A_{1_1}=\cdots=A_{1_m}$, we obtain $A_{g_1}=\cdots=A_{g_m}$ for each $g\in G$. Then since $\left\langle  T_{i,j}\mid i,j\in\{1,\dots,m\} \right\rangle =G$, it follows that, for each $g\in G$, the group $A_{1_1}$ fixes $g_i$ for some $i\in\{1,\dots,m\}$, whence $A_{1_1}\leq A_{g_i}=A_{g_j}$ for all $j\in \{1,\dots,m\}$. This shows that $A_{1_1}$ fixes every vertex of $\Gamma$, and so $A_{1_1}=1$. Therefore, $A=G$ as $A$ stabilizes $G_1$.	
		\end{proof}
	
	We deal with groups $G$ with $d(G)\leq2$ and $d(G)=3$, respectively, in the following two lemmas.
	
	\begin{lemma}\label{lemma=d(G)=2}
		Let $G$ be a group with $d(G)\leq2$ such that $G\notin\mathcal{G}_0$. Then $G$ has an $m$-HGR for each $m\geq 3$.
	\end{lemma}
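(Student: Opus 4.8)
The plan is to follow the blueprint of Section~\ref{Sec3}: for a group $G$ as in the statement I would exhibit four explicit $m$-Cayley graphs of $G$ — a $3$-HGR, a $4$-HGR, a $3$-PGSR meeting the hypotheses of Lemma~\ref{Lem3PGSR}, and a $4$-PGSR meeting the hypotheses of Lemma~\ref{Lem4PGSR}. The first two settle $m=3$ and $m=4$ directly, while feeding the latter two into Lemmas~\ref{Lem3PGSR} and~\ref{Lem4PGSR} produces $m$-HGRs for all odd $m\ge 5$ and all even $m\ge 6$. Together these cover every $m\ge 3$, so the entire content lies in producing the four base graphs and proving that each has automorphism group exactly $G$.

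First I would normalise the generating data. If $d(G)=1$ then $G=C_n$ with $n\ge 7$ (since $C_1,\dots,C_6\in\mathcal{G}_0$), and I take a generator $x$, so $|x|=n\ge 4$. If $d(G)=2$ then, as $G\notin\mathcal{G}_0$ rules out the whole list $\{C_2^2,C_3^2,D_6,A_4,3_+^{1+2}\}$, Lemma~\ref{Lem=5.0} supplies a generating pair $\{x,y\}$ with $|x|\ge 4$. Hence in every case $1,x,x^2,x^3$ are four distinct elements and $\langle x,y\rangle=G$ (reading $y$ as a power of $x$, or as trivial, in the cyclic case). I would then write the connection matrices using only $1,x,x^{-1},x^2,y$ and short products, arranging that the designated ``heavy'' part(s) receive exactly one extra neighbour compared with the ``light'' parts (matching the valency conditions (a),(b) of Lemmas~\ref{Lem3PGSR},~\ref{Lem4PGSR}), that a $3$-cycle runs through every vertex (condition (c)), and — crucially for rigidity — that the union of all entries generates $G$, so that Lemma~\ref{Lem=stabilizertrivial} becomes applicable. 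Because coincidences can arise in the cyclic case when $y\in\langle x\rangle$, I anticipate splitting into the cyclic and non-cyclic subcases and checking the valency and triangle bookkeeping separately; these verifications are routine counting.

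The heart of the argument, and the main obstacle, is proving $\Aut(\Gamma)=G$ uniformly for each of the four graphs $\Gamma$ across the infinitely many groups in the family. My route is Lemma~\ref{Lem=stabilizertrivial}: I must (i) show that $A:=\Aut(\Gamma)$ stabilises each part $G_i$, and (ii) show that the point stabiliser $A_{1_i}$ fixes $\Gamma(1_i)$ pointwise, with $A_{1_1}=\dots=A_{1_m}$. For (i), the valency gap isolates the heavy part(s), but the light parts carry equal valency (indeed the valency equations force $|T_{1,2}|=|T_{1,3}|$ in the $3$-PGSR), so they must be separated by a finer local invariant such as the distribution of triangles or the internal structure of the part-to-part blocks; I would engineer the matrices so that this invariant is asymmetric. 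For (ii), I would exploit the deliberate non-symmetry of the chosen sets: including a non-inverse-closed block of $\{1,x\}$-type is precisely what kills the inversion map $g_i\mapsto (g^{-1})_i$ and, more generally, any $\sigma\in\Aut(G)$ composed with a translation that would otherwise permute $\Gamma(1_i)$. The abelian (in particular cyclic) case is the delicate one here, since abelian Cayley-type graphs abound in such affine symmetries, and breaking all of them at once with a single uniform choice of sets — robust even against the merging of entries when $y\in\langle x\rangle$ — is where the real difficulty sits.

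Once rigidity is secured the proof closes immediately. The $3$-HGR and $4$-HGR give $m$-HGRs for $m=3,4$; the $3$-PGSR satisfies conditions (a)--(c) of Lemma~\ref{Lem3PGSR}, yielding $m$-HGRs for every odd $m\ge 5$; and the $4$-PGSR satisfies conditions (a)--(c) of Lemma~\ref{Lem4PGSR}, yielding $m$-HGRs for every even $m\ge 6$. Therefore $G$ admits an $m$-HGR for all $m\ge 3$, as claimed.
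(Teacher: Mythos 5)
Your blueprint coincides exactly with the paper's: normalise via Lemma~\ref{Lem=5.0} to a generating pair $\{x,y\}$ with $|x|\geq4$ (the paper takes $x=a$, $y=a^3$ when $G=\langle a\rangle$ is cyclic of order at least $7$, which is what makes a single uniform construction work without the cyclic/non-cyclic case split you anticipate), then produce a $3$-HGR and a $4$-HGR for $m=3,4$ and a $3$-PGSR and a $4$-PGSR feeding Lemmas~\ref{Lem3PGSR} and~\ref{Lem4PGSR} for $m\geq5$, with rigidity in all four cases closed off by Lemma~\ref{Lem=stabilizertrivial}. On strategy there is no daylight between your plan and the paper.

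The gap is that the plan is all there is. You state, correctly, that ``the entire content lies in producing the four base graphs and proving that each has automorphism group exactly $G$,'' and then neither step is carried out: no connection matrix is written down, and no rigidity argument is given beyond the intention to ``engineer'' asymmetry. This matters because the verifications are not routine in the way the proposal suggests. For instance, in the paper's $3$-HGR with $T_{1,2}=\{1,x,y^{-1}\}$, $T_{1,3}=\{1,x,x^{-1}\}$, $T_{2,3}=\{x,x^{-1},y\}$, the induced neighbourhood graphs $[\Gamma(1_2)]$ and $[\Gamma(1_3)]$ are \emph{isomorphic} (each an edge plus a $3$-path), so the ``finer local invariant'' you hope for does not exist for that construction; the paper instead supposes an automorphism $\alpha$ swaps $1_2$ and $1_3$, tracks its effect on a specific $3$-path through $1_1$, and derives the contradiction $((x^{-1})_1)^\alpha=x_1=y_1$. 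A similar second-neighbourhood count (the four-way case analysis of $\Gamma(1_3)\cap\Gamma(z_3)$ versus $\Gamma(1_2)\cap\Gamma(x_2)$) is needed elsewhere in Section~\ref{Sec4}. Until the explicit sets are chosen and these separation arguments are actually run, the lemma is not proved; what you have is an accurate table of contents for the paper's proof rather than the proof itself.
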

	
	\begin{proof}
		First of all, let us take two elements $x$ and $y$ in $G$ as follows. If $d(G)=1$, then $G=\langle a\rangle$ for some element $a$ of order at least $7$ (since $G\notin\mathcal{G}_0$), and in this case we take $x=a$ and $y=a^3$. If $d(G)=2$, then by Lemma~\ref{Lem=5.0}, there are elements $x$ and $y$ of $G$ such that $G=\langle x,y\rangle$ and $|x|\geq4$.
		In the following we divide the proof into four cases according to the value of $m$.
		
		\smallskip
		\textsf{Case 1}: $m=3$.
		\smallskip
		
		Let $\Gamma=\Cay(G,(T_{i,j})_{3\times3})$ with $T_{1,2}=\{1,x,y^{-1}\}$, $T_{1,3}=\{1,x,x^{-1}\}$ and $T_{2,3}=\{x,x^{-1},y\}$, and let $A=\Aut(\Gamma)$. Then $\Gamma$ is a $3$-Haar graph of $G$ with valency $6$ such that
		\begin{align*}
			\Gamma(1_1)&=\{1_2,x_2,(y^{-1})_2,1_3,x_3,(x^{-1})_3\},\\
			\Gamma(1_2)&=\{1_1,(x^{-1})_1,y_1,x_3,(x^{-1})_3,y_3\},\\
			\Gamma(1_3)&=\{1_1,(x^{-1})_1,x_1,(x^{-1})_2,x_2,(y^{-1})_2\}.
		\end{align*}
		With $[\Gamma(1_1)]$, $[\Gamma(1_2)]$ and $[\Gamma(1_3)]$ depicted in Figure~\ref{Fig6}, one sees readily that $[\Gamma(1_1)]\ncong [\Gamma(1_i)]$ for $i\in\{2,3\}$. Consequently, $A$ stabilizes $G_1$.
		With this in mind, it is evident from Figure~\ref{Fig6} that $A_{1_i}$ fixes $\Gamma(1_i)$ pointwise for each $i\in \{2,3\}$.
		
		\begin{figure}[!ht]
			\vspace{3mm}
			\caption{The induced subgraph $[\Gamma(1_i)]$ with $i\in\{1,2,3\}$}\label{Fig6}
			\begin{tikzpicture}[node distance=0.9cm,thick,scale=0.63,every node/.style={transform shape},scale=1.1]
				\node[](A0){};
				\node[right=of A0](D00){};
				\node[right=of D00](D000){};
				\node[right=of D000](D0000){};
				\node[right=of D0000](D00000){};
				\node[right=of D00000](D000000){};
				\node[right=of D0000](Dp){};
				\node[above=of Dp, circle,draw, inner sep=1pt, label=left:{\Large$(y^{-1})_2$}](D0){};
				\node[above=of D0, circle,draw, inner sep=1pt, label=left:{\Large$1_3$}](D1){};
				\node[above=of D1, circle,draw, inner sep=1pt, label=left:{\Large$x_2$}](D2){};
				\node[below=of D0, circle,draw, inner sep=1pt, label=left:{\Large$x_3$}](D3){};
				\node[below=of D3, circle,draw, inner sep=1pt, label=left:{\Large$1_2$}](D4){};
				\node[below=of D4, circle,draw, inner sep=1pt, label=left:{\Large$(x^{-1})_3$}](D5){};
				\node[below=of D5](D50){};
				\node[right=of D50, label=left:{\Large$[\Gamma(1_1)]$}](){};
				\draw[] (D0) to (D1);
				\draw[] (D2) to (D1);
				%\draw[dashed] (D0) to (D3);
				\draw[] (D3) to (D4);
				\draw[] (D5) to (D4);
				\node[right=of Dp](X0){};
				\node[right=of X0](X01){};
				\node[right=of X01](X02){};
				\node[right=of X02](X00){};
				%\node[right=of X00, circle,draw, inner sep=0.6pt, label=left:{$1_2$}](X1){};
				\node[right=of X00](X2){};\node[right=of X2](X3){};
				\node[above=of X2, circle,draw, inner sep=1pt, label=left:{\Large$x_3$}](W0){};
				\node[above=of W0, circle,draw, inner sep=1pt, label=left:{\Large$y_3$}](W1){};
				\node[above=of W1, circle,draw, inner sep=1pt, label=left:{\Large$y_1$}](W2){};
				\node[below=of W0, circle,draw, inner sep=1pt, label=left:{\Large$1_1$}](W3){};
				\node[below=of W3, circle,draw, inner sep=1pt, label=left:{\Large$(x^{-1})_3$}](W4){};
				\node[below=of W4, circle,draw, inner sep=1pt, label=left:{\Large$(x^{-1})_1$}](W5){};
				\node[below=of W5](W50){};
				\node[right=of W50, label=left:{\Large$[\Gamma(1_2)]$}](){};
				\draw[] (W1) to (W2);
				\draw[] (W0) to (W3);
				\draw[] (W3) to (W4);
				\draw[] (W5) to (W4);
				%\draw[ dashed ] [bend right](W5) to (W0);
				\node[right=of X2](Y00){};
				\node[right=of Y00](Y01){};
				\node[right=of Y01](Y02){};
				\node[right=of Y02](Y20){};
				%\node[right=of Y20, circle,draw, inner sep=0.6pt, label=left:{$1_3$}](E1){};
				\node[right=of Y20](E1){};
				\node[right=of E1](E2){};
				\node[right=of E2](E3){};
				\node[above=of E1, circle,draw, inner sep=1pt, label=left:{\Large$(y^{-1})_2$}](F0){};
				\node[above=of F0, circle,draw, inner sep=1pt, label=left:{\Large$(x^{-1})_2$}](F1){};
				\node[above=of F1, circle,draw, inner sep=1pt, label=left:{\Large$(x^{-1})_1$}](F2){};
				\node[below=of F0, circle,draw, inner sep=1pt, label=left:{\Large$1_1$}](F3){};
				\node[below=of F3, circle,draw, inner sep=1pt, label=left:{\Large$x_2$}](F4){};
				\node[below=of F4, circle,draw, inner sep=1pt, label=left:{\Large$x_1$}](F5){};
				\node[below=of F5](F50){};
				\node[right=of F50, label=left:{\Large$[\Gamma(1_3)]$}](){};
				\draw[] (F1) to (F2);
				\draw[] (F3) to (F4);
				\draw[] (F3) to (F0);
				\draw[] (F4) to (F5);
				%\draw[dashed] (F4) to (F3);
				%\draw[dashed] [bend right](F5) to (F0);
			\end{tikzpicture}
			\vspace{3mm}
		\end{figure}
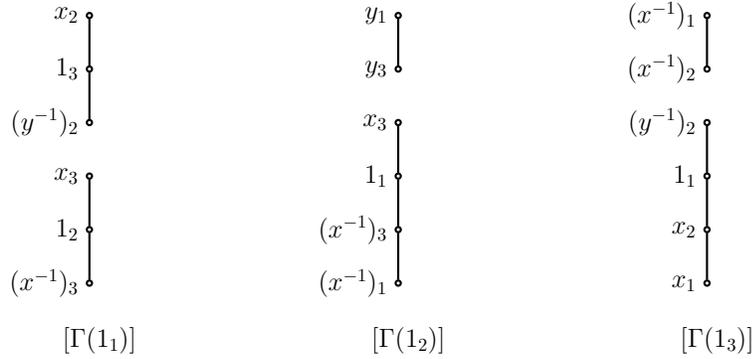
		
		Suppose that there exists $\a\in A$ with $(1_2)^{\a}=1_3$. Then $\a$ maps $[\Gamma(1_2)]$ to $[\Gamma(1_3)]$. Since $\alpha$ stabilizes $G_1$, it maps the $3$-path $(x_3,1_1,(x^{-1})_3,(x^{-1})_1)$ to the $3$-path $((y^{-1})_2,1_1,x_2,x_1)$.
		In particular, $(1_1)^{\a}=1_1$ and $((x^{-1})_1)^{\a}=x_1$.
		Then $\a$ stabilizes $\Gamma({1_1})$, which together with $(1_2)^{\a}=1_3$ implies that $(1_3)^{\a}=1_2$.
		This in turn indicates that $\a$ maps $[\Gamma(1_3)]$ to $[\Gamma(1_2)]$, and so it maps the arc $((x^{-1})_2,(x^{-1})_1)$ to the arc $(y_3,y_1)$. However, this leads to $((x^{-1})_1)^\alpha=y_1$, which contradicts the conclusion $((x^{-1})_1)^{\a}=x_1$.
		
		Thus, $A$ stabilizes $G_2$ and $G_3$ respectively. It follows that $A_{1_1}$ fixes $\Gamma(1_1)$ pointwise. Now $A_{1_i}$ fixes $\Gamma(1_i)$ pointwise for each $i\in\{1,2,3\}$. In particular, $A_{1_1}=A_{1_2}=A_{1_3}$. Then we conclude that $\Gamma$ is a $3$-HGR of $G$ by Lemma~\ref{Lem=stabilizertrivial}.
		
		\smallskip
		\textsf{Case 2}: $m=4$.
		\smallskip
		
		Let $\Gamma=\Cay(G,(T_{i,j})_{4\times4})$ with $T_{1,2}=T_{1,3}=T_{2,4}=\{1,x\}$, $T_{1,4}=T_{2,3}=\{1\}$ and $T_{3,4}=\{x,y\}$, and let $A=\Aut(\Gamma)$. Then $\Gamma$ is a $4$-Haar graph of $G$ with valency $5$ such that
		\begin{align*}
			\Gamma(1_1)&=\{1_2,x_2,1_3,x_3,1_4\},\\
			\Gamma(1_2)&=\{1_1,(x^{-1})_1,1_3,1_4,x_4\},\\
			\Gamma(1_3)&=\{1_1,(x^{-1})_1,1_2,x_4,y_4\},\\
			\Gamma(1_4)&=\{1_1,1_2,(x^{-1})_2,(x^{-1})_3,(y^{-1})_3\}.
		\end{align*}
		The induced subgraphs $[\Gamma(1_1)]$, $[\Gamma(1_2)]$, $[\Gamma(1_3)]$ and $[\Gamma(1_4)]$ are depicted in Figure~\ref{Fig7}, which shows that $[\Gamma(1_i)]\ncong [\Gamma(1_j)]$ with distinct $i$ and $j$ in $\{1,\ldots,4\}$. Hence $A$ stabilizes $G_i$ for each $i\in\{1,\ldots,4\}$. Then it is clear from Figure~\ref{Fig7} that $A_{1_i}$ fixes $\Gamma(1_i)$ pointwise for $i\in \{1,2,4\}$, and $A_{1_3}$ fixes each of $1_2$, $x_4$ and $y_4$. This in turn implies that $A_{1_3}\leq A_{1_2}$ fixes $(x^{-1})_1$ and thus fixes $\Gamma(1_3)$ pointwise. Now that $A_{1_i}$ fixes $\Gamma(1_i)$ pointwise for $i\in \{1,2,3,4\}$. In particular, $A_{1_1}=A_{1_2}=A_{1_3}=A_{1_4}$. Therefore, $\Gamma$ is a $4$-HGR of $G$ by Lemma~\ref{Lem=stabilizertrivial}.
		
		\begin{figure}[!ht]
			\vspace{3mm}
			\caption{The induced subgraph $[\Gamma(1_i)]$ with $i\in\{1,2,3,4\}$}\label{Fig7}
			\begin{tikzpicture}[node distance=0.9cm,thick,scale=0.63,every node/.style={transform shape},scale=1.1]
				\node[](A0){};
				\node[right=of A0](D00){};
				\node[right=of D00](D000){};
				\node[right=of D000](Dp){};
				\node[above=of Dp, circle,draw, inner sep=1pt, label=left:{\Large$x_3$}](D0){};
				\node[above=of D0, circle,draw, inner sep=1pt, label=left:{\Large$x_2$}](D1){};
				\node[below=of D0, circle,draw, inner sep=1pt, label=left:{\Large$1_3$}](D3){};
				\node[below=of D3, circle,draw, inner sep=1pt, label=left:{\Large$1_2$}](D4){};
				\node[below=of D4, circle,draw, inner sep=1pt, label=left:{\Large$1_4$}](D5){};
				\node[below=of D5](D50){};
				\node[right=of D50, label=left:{\Large$[\Gamma(1_1)]$}](){};
				\draw[] (D0) to (D1);
				\draw[] (D3) to (D4);
				\draw[] (D5) to (D4);
				\node[right=of Dp](X0){};
				\node[right=of X0](X01){};
				\node[right=of X01](X02){};
				\node[right=of X02](X00){};
				\node[above=of X00, circle,draw, inner sep=1pt, label=left:{\Large$1_1$}](W0){};
				\node[above=of W0, circle,draw, inner sep=1pt, label=left:{\Large$1_4$}](W1){};
				\node[below=of W0, circle,draw, inner sep=1pt, label=left:{\Large$1_3$}](W3){};
				\node[below=of W3, circle,draw, inner sep=1pt, label=left:{\Large$x_4$}](W4){};
				\node[below=of W4, circle,draw, inner sep=1pt, label=left:{\Large$(x^{-1})_1$}](W5){};
				\node[below=of W5](W50){};
				\node[right=of W50, label=left:{\Large$[\Gamma(1_2)]$}](){};
				\draw[] (W0) to (W1);
				\draw[] (W1) to (W0);
				\draw[] (W3) to (W0);
				\draw[] (W4) to (W3);
				\draw [bend left] (W3) to (W5);
				\node[right=of X00](Y0){};
				\node[right=of Y0](Y01){};
				\node[right=of Y01](Y02){};
				\node[right=of Y02](Y00){};
				\node[above=of Y00, circle,draw, inner sep=1pt, label=left:{\Large$(x^{-1})_1$}](F0){};
				\node[above=of F0, circle,draw, inner sep=1pt, label=left:{\Large$y_4$}](F1){};
				\node[below=of F0, circle,draw, inner sep=1pt, label=left:{\Large$1_2$}](F3){};
				\node[below=of F3, circle,draw, inner sep=1pt, label=left:{\Large$x_4$}](F4){};
				\node[below=of F4, circle,draw, inner sep=1pt, label=left:{\Large$1_1$}](F5){};
				\node[below=of F5](F50){};
				\node[right=of F50, label=left:{\Large$[\Gamma(1_3)]$}](){};
				\draw[] (F0) to (F3);
				\draw[] (F3) to (F4);
				\draw [bend left] (F3) to (F5);
				\node[right=of Y00](E0){};
				\node[right=of E0](E01){};
				\node[right=of E01](E02){};
				\node[right=of E02](E00){};
				\node[above=of E00, circle,draw, inner sep=1pt, label=left:{\Large$1_1$}](B0){};
				\node[above=of B0, circle,draw, inner sep=1pt, label=left:{\Large$(y^{-1})_3$}](B1){};
				\node[below=of B0, circle,draw, inner sep=1pt, label=left:{\Large$1_2$}](B3){};
				\node[below=of B3, circle,draw, inner sep=1pt, label=left:{\Large$(x^{-1})_2$}](B4){};
				\node[below=of B4, circle,draw, inner sep=1pt, label=left:{\Large$(x^{-1})_3$}](B5){};
				\node[below=of B5](B50){};
				\node[right=of B50, label=left:{\Large$[\Gamma(1_4)]$}](){};
				\draw[] (B0) to (B3);
				\draw[] (B5) to (B4);
			\end{tikzpicture}
			\vspace{3mm}
		\end{figure}
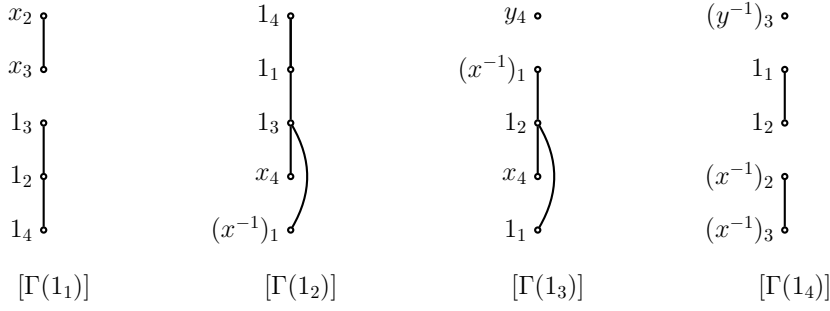
		
		\smallskip
		\textsf{Case 3}: $m\geq 5$ odd.
		\smallskip
		
		Let $\Sigma=\Cay(G,(T_{i,j})_{3\times3})$ with $T_{1,2}=\{1,x,y^{-1}\}$, $T_{1,3}=\{1,x,x^{-1}\}$ and $T_{2,3}=\{x,x^{-1}\}$.
	Clearly, $\Sigma$ satisfies conditions~(a)--(c) of Lemma~\ref{Lem3PGSR}, and
		\begin{align*}
			\Sigma(1_1)&=\{1_2,x_2,(y^{-1})_2,1_3,x_3,(x^{-1})_3\},\\
			\Sigma(1_2)&=\{1_1,(x^{-1})_1,y_1,x_3,(x^{-1})_3\},\\
			\Sigma(1_3)&=\{1_1,(x^{-1})_1,x_1,(x^{-1})_2,x_2\}.
		\end{align*}
		It is straightforward to verify the following statements by the definition of $\Sigma$:
		\begin{itemize}
			\item $[\Sigma(1_1)]$ is a union of an isolated vertex $(y^{-1})_2$, an edge $\{x_2,1_3\}$ and a $2$-path $(x_3,1_2,(x^{-1})_3)$;
			\item $[\Sigma(1_2)]$ is a union of an isolated vertex $y_1$ and a $3$-path $((x^{-1})_1,(x^{-1})_3,1_1,x_3)$;
			\item $[\Sigma(1_3)]$ is a union of an edge $\{(x^{-1})_1,(x^{-1})_2\}$ and a $2$-path $(1_1,x_2,x_1)$.
		\end{itemize}
		Let $A=\Aut(\Sigma)$. Then we conclude from the above statements that $A$ stabilizes $G_i$ for each $i\in\{1,2,3\}$, and then $A_{1_2}$ fixes $\Sigma(1_2)$ pointwise.  Moreover, $A_{1_1}$ fixes $1_2$, and $A_{1_3}$ fixes $x_2$. It follows that $A_{1_1}\leq A_{1_2}$ fixes every element of $\Sigma(1_2)$ including $x_3$, and $A_{1_3}\leq A_{x_2}$ fixes every element of $\Sigma(x_2)$ including $1_1$. Thus we infer that $A_{1_i}$ fixes $\Sigma(1_i)$ pointwise for $i\in\{1,3\}$. In particular, $A_{1_1}=A_{1_2}=A_{1_3}$. Therefore,  $\Sigma$ is a $3$-PGSR of $G$ by Lemma~\ref{Lem=stabilizertrivial}, and we are now in a position to apply Lemma~\ref{Lem3PGSR}, which asserts that $G$ has an $m$-HGR.
		
		\smallskip
		\textsf{Case 4}: $m\geq 6$ even.
		\smallskip
		
		Let $\Sigma=\Cay(G,(T_{i,j})_{4\times4})$ with $T_{1,2}=T_{1,3}=T_{2,4}=\{1,x\}$, $T_{1,4}=T_{2,3}=\{1\}$ and $T_{3,4}=\{y\}$.
		It is straightforward to verify that $\Sigma$ satisfies conditions~(a)--(c) of Lemma~\ref{Lem4PGSR} with
		\begin{align*}
			\Sigma(1_1)&=\{1_2,x_2,1_3,x_3,1_4\},\\
			\Sigma(1_2)&=\{1_1,(x^{-1})_1,1_3,1_4,x_4\},\\
			\Sigma(1_3)&=\{1_1,(x^{-1})_1,1_2,y_4\},\\
			\Sigma(1_4)&=\{1_1,1_2,(x^{-1})_2,(y^{-1})_3\},
		\end{align*}
		and the following statements hold:
		\begin{itemize}
			\item $[\Sigma(1_1)]$ is a union of an edge $\{x_2,x_3\}$ and a 2-path $(1_3,1_2,1_4)$;
			\item $[\Sigma(1_2)]$ is a union of an isolated vertex $x_4$ and a 3-path $(1_4,1_1,1_3,(x^{-1})_1)$;
			\item $[\Sigma(1_3)]$ is a union of an isolated vertex $y_4$ and a 2-path $(1_1,1_2,(x^{-1})_1)$;
			\item $[\Sigma(1_4)]$ is a union of two isolated vertices $(x^{-1})_2$ and $(y^{-1})_3$ and an edge $\{1_1,1_2\}$.
		\end{itemize}
		Let $A=\Aut(\Sigma)$. From the above statements we conclude that  $A$ stabilizes $G_i$ for each $i\in\{1,2,3,4\}$, and then $A_{1_i}$ fixes $\Sigma(1_i)$ pointwise for $i\in \{1,2,4\}$. Since $A_{1_3}$ fixes $1_2\in\Sigma(1_3)$, it follows that $A_{1_3}$ fixes $1_1\in\Sigma(1_2)$ and hence fixes $\Sigma(1_3)$ pointwise.
		Moreover, $A_{1_1}=A_{1_2}=A_{1_3}=A_{1_4}$.
	 Therefore, $\Sigma$ is a $4$-PGSR of $G$ by Lemma~\ref{Lem=stabilizertrivial}, and so Lemma~\ref{Lem4PGSR} asserts that $G$ has an $m$-HGR.
	\end{proof}
	
	\begin{lemma}\label{lemma=d(G)=3}
		Let $G$ be a group with $d(G)=3$ such that $G\notin\mathcal{G}_0$. Then $G$ has an $m$-HGR for each $m\geq 3$.
	\end{lemma}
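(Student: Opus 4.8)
The plan is to follow the template of Lemma~\ref{lemma=d(G)=2}, the only new feature being that a minimal generating set of $G$ now has three members rather than two, so a third generator must be woven into the connection sets. Since $G\notin\mathcal{G}_0$ forces $G\neq C_2^3$, and $C_2^3$ is the unique group of exponent $2$ with $d(G)=3$, I would first fix a generating set $\{x,y,z\}$ of $G$ chosen so that at least one generator, say $x$, has order at least $3$ (an analogue of Lemma~\ref{Lem=5.0} for $d(G)=3$, proved by the same elementary argument of replacing a pair of involutions $a,b$ by $ab,b$ whenever $ab$ has order at least $3$, the failure of which forces $G=C_2^3$). The strict inequality $x\neq x^{-1}$ is what creates a directed asymmetry in the local structure, exactly as $|x|\geq4$ was used before. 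I would then split into four cases according to $m$: construct an explicit $3$-HGR for $m=3$ and an explicit $4$-HGR for $m=4$, a $3$-PGSR meeting conditions (a)--(c) of Lemma~\ref{Lem3PGSR} for odd $m\geq5$, and a $4$-PGSR meeting conditions (a)--(c) of Lemma~\ref{Lem4PGSR} for even $m\geq6$. Together with Lemmas~\ref{Lem3PGSR} and~\ref{Lem4PGSR} this covers every $m\geq3$.

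In each case the connection matrix would be a mild thickening of the one used for $d(G)=2$: I would insert $z$, and where the prescribed valencies demand it also $z^{-1}$, into the sets $T_{1,2}$ and $T_{1,3}$, keeping their cardinalities equal, and adjust $T_{2,3}$ (respectively the six $4\times4$ sets) so that the regularity constraint for the HGR cases, or the valency profile $(k+1,k,k)$ and $(k{+}1,k{+}1,k,k)$ of the two PGSR lemmas, still holds. Because the triple $\{x,y,z\}$ survives inside $\bigcup_{i<j}T_{i,j}$, the hypothesis $\langle T_{i,j}\mid i,j\rangle=G$ of Lemma~\ref{Lem=stabilizertrivial} is immediate, and a short inspection exhibits a $3$-cycle through every vertex so that condition (c) of the relevant PGSR lemma holds.

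The heart of the argument is then, as before, to prove $\Aut(\Gamma)\cong G$ for each constructed $\Gamma$. Writing $A=\Aut(\Gamma)$, I would compute the neighbourhoods $\Gamma(1_i)$ explicitly and draw the induced subgraphs $[\Gamma(1_i)]$; arranging the $z$-edges so that these are pairwise non-isomorphic forces $A$ to stabilise every part $G_i$. When a residual symmetry could still swap two parts of equal valency, as in Case~1 of Lemma~\ref{lemma=d(G)=2}, I would rule it out by tracing a distinguished path through the local structure and reaching a contradiction of the form $((x^{-1})_1)^{\alpha}=x_1$ against $((x^{-1})_1)^{\alpha}=y_1$. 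Once the parts are fixed, the asymmetry of each $[\Gamma(1_i)]$ shows that $A_{1_i}$ fixes $\Gamma(1_i)$ pointwise and yields $A_{1_1}=\dots=A_{1_m}$; Lemma~\ref{Lem=stabilizertrivial} then gives $A=G$, turning the $m=3,4$ graphs into $m$-HGRs and the small graphs into genuine $3$- and $4$-PGSRs to which Lemmas~\ref{Lem3PGSR} and~\ref{Lem4PGSR} apply.

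The main obstacle is the \emph{uniform} rigidity: the connection sets must be written down once, in terms of the abstract generators $x,y,z$, and made to work for every group with $d(G)=3$ other than $C_2^3$. The difficulty is that inserting the third generator can create accidental coincidences---$z\in\{x^{\pm1},y^{\pm1}\}$ is excluded by minimality, but products such as $x^{-1}z$ or $zy^{-1}$ may collapse a set size, identify two neighbourhood subgraphs, or manufacture an unwanted automorphism---and these degeneracies depend delicately on the orders and relations in $G$. The real work is to position $z$ and $z^{-1}$ so that the local subgraphs remain pairwise non-isomorphic and individually asymmetric irrespective of such coincidences, in particular when some generators are involutions, the reduction to $|x|\geq3$ being the pressure-relief valve that guarantees at least one genuinely directed edge-label, while assuming nothing about $G$ beyond $d(G)=3$ and $G\neq C_2^3$.
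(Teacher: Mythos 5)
Your strategy coincides with the paper's: reduce to a generating set $\{x,y,z\}$ with $|x|\geq3$ (the paper does this by the same replacement argument, observing that if $a,b,c,ab,ac,bc$ were all involutions then $G=C_2^3\in\mathcal{G}_0$), then split into the four cases $m=3$, $m=4$, odd $m\geq5$, even $m\geq6$, exhibit respectively a $3$-HGR, a $4$-HGR, a $3$-PGSR satisfying Lemma~\ref{Lem3PGSR} and a $4$-PGSR satisfying Lemma~\ref{Lem4PGSR}, and in each case pin down the automorphism group via the non-isomorphism of the induced neighbourhood subgraphs $[\Gamma(1_i)]$ followed by Lemma~\ref{Lem=stabilizertrivial}. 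The reduction, the case split, the role of $|x|\geq 3$, and the closing mechanism are all correctly identified.

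The gap is that the proposal never writes down a single connection set: every step that requires verification is phrased as ``I would arrange the $z$-edges so that \dots'', and your final paragraph concedes that ``the real work'' of positioning $z$ and $z^{-1}$ so that the local subgraphs stay pairwise non-isomorphic and individually rigid for \emph{every} admissible $G$ remains to be done. That work is the entire content of the lemma, and it is not routine. For instance, in the paper's $m=3$ construction, with $T_{1,2}=\{1,x,x^{-1}\}$, $T_{1,3}=\{1,y^{-1},z\}$ and $T_{2,3}=\{1,x^{-1},z\}$, comparing the $[\Gamma(1_i)]$ only forces $\Aut(\Gamma)$ to stabilize $G_1$; excluding an automorphism swapping $G_2$ and $G_3$ requires tracking a hypothetical $\alpha$ to the conclusion $(z_3)^{\alpha}=x_2$ and then comparing $|\Gamma(1_3)\cap\Gamma(z_3)|$ and $|\Gamma(1_3)\cap\Gamma(z_3)\cap G_1|$ with $|\Gamma(1_2)\cap\Gamma(x_2)|$ and $|\Gamma(1_2)\cap\Gamma(x_2)\cap G_1|$ in four subcases according to whether $|z|=2$ and whether $|x|=3$. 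Likewise the even case needs a valency argument ($|T_{1,3}|=2$ versus $|T_{1,4}|=1$) to separate $G_3$ from $G_4$. Nothing in the proposal guarantees that a candidate matrix survives these checks uniformly in $G$, so as written the argument is a plausible plan rather than a proof; to complete it you must supply the explicit matrices (the paper's choices for all four cases would do) and carry out the corresponding case analyses.
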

	
	\begin{proof}
		Let $\{a,b,c\}$ be a generating set of $G$. If $|a|=|b|=|c|=2$, then at least one of $ab$, $ac$ or $bc$ has order at least $3$, for otherwise $G=C_2^3\in\mathcal{G}_0$, a contradiction. Then as $G=\langle a,ab,c\rangle=\langle a,b,ac\rangle=\langle a,bc,c\rangle$, there always exists a generating set $\{x,y,z\}$ of $G$ such that $|x|\geq3$.
		%We divide the proof into four cases according to the value of $m$.
		
		\smallskip
		\textsf{Case 1}: $m=3$.
		\smallskip
		
		Let $\Gamma=\Cay(G,(T_{i,j})_{3\times3})$ with $T_{1,2}=\{1,x,x^{-1}\}$, $T_{1,3}=\{1,y^{-1},z\}$ and $T_{2,3}=\{1,x^{-1},z\}$, and $A=\Aut(\Gamma)$. Then $\Gamma$ is a $3$-Haar graph of $G$ with valency $6$ such that
		\begin{align*}
			\Gamma(1_1)&=\{1_2,x_2,(x^{-1})_2,1_3,(y^{-1})_3,z_3\},\\
			\Gamma(1_2)&=\{1_1,(x^{-1})_1,x_1,1_3,(x^{-1})_3,z_3\},\\
			\Gamma(1_3)&=\{1_1,y_1,(z^{-1})_1,1_2,x_2,(z^{-1})_2\}.
		\end{align*}
		The induced subgraphs $[\Gamma(1_1)]$, $[\Gamma(1_2)]$ and $[\Gamma(1_3)]$ are depicted in Figure~\ref{Fig8}, which shows that $[\Gamma(1_1)]\ncong [\Gamma(1_i)]$ for $i\in\{2,3\}$. As a consequence, $A$ stabilizes $G_1$.
		It follows that $A_{1_2}$ fixes each of $1_1$, $x_1$, $(x^{-1})_1$ and $(x^{-1})_3$. Hence $A_{1_2}=A_{1_1}\cap A_{1_2}$ fixes both
		$1_3$ and $z_3$, and so $A_{1_2}$ fixes $\Gamma(1_2)$ pointwise. Similarly, $A_{1_3}$ fixes $\Gamma(1_3)$ pointwise.
		
		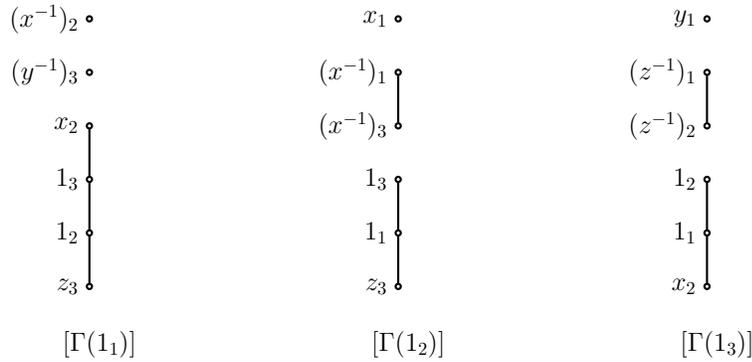
\begin{figure}[!ht]
			\vspace{3mm}
			\caption{The induced subgraph $[\Gamma(1_i)]$ with $i\in\{1,2,3\}$}\label{Fig8}
			\begin{tikzpicture}[node distance=0.9cm,thick,scale=0.63,every node/.style={transform shape},scale=1.1]
				\node[](A0){};
				\node[right=of A0](D00){};
				\node[right=of D00](D000){};
				\node[right=of D000](D0000){};
				\node[right=of D0000](D00000){};
				\node[right=of D00000](D000000){};
				\node[right=of D0000](Dp){};
				\node[above=of Dp, circle,draw, inner sep=1pt, label=left:{\Large$x_2$}](D0){};
				\node[above=of D0, circle,draw, inner sep=1pt, label=left:{\Large$(y^{-1})_3$}](D1){};
				\node[above=of D1, circle,draw, inner sep=1pt, label=left:{\Large$(x^{-1})_2$}](D2){};
				\node[below=of D0, circle,draw, inner sep=1pt, label=left:{\Large$1_3$}](D3){};
				\node[below=of D3, circle,draw, inner sep=1pt, label=left:{\Large$1_2$}](D4){};
				\node[below=of D4, circle,draw, inner sep=1pt, label=left:{\Large$z_3$}](D5){};
				\node[below=of D5](D50){};
				\node[right=of D50, label=left:{\Large$[\Gamma(1_1)]$}](){};
				\draw[] (D0) to (D3);
				\draw[] (D3) to (D4);
				\draw[] (D5) to (D4);
				\node[right=of Dp](X0){};
				\node[right=of X0](X01){};
				\node[right=of X01](X02){};
				\node[right=of X02](X00){};
				%\node[right=of X00, circle,draw, inner sep=1pt, label=left:{$1_2$}](X1){};
				\node[right=of X00](X2){};\node[right=of X2](X3){};
				\node[above=of X2, circle,draw, inner sep=1pt, label=left:{\Large$(x^{-1})_3$}](W0){};
				\node[above=of W0, circle,draw, inner sep=1pt, label=left:{\Large$(x^{-1})_1$}](W1){};
				\node[above=of W1, circle,draw, inner sep=1pt, label=left:{\Large$x_1$}](W2){};
				\node[below=of W0, circle,draw, inner sep=1pt, label=left:{\Large$1_3$}](W3){};
				\node[below=of W3, circle,draw, inner sep=1pt, label=left:{\Large$1_1$}](W4){};
				\node[below=of W4, circle,draw, inner sep=1pt, label=left:{\Large$z_3$}](W5){};
				\node[below=of W5](W50){};
				\node[right=of W50, label=left:{\Large$[\Gamma(1_2)$]}](){};
				\draw[] (W0) to (W1);
				\draw[] (W3) to (W4);
				\draw[] (W5) to (W4);
				%\draw[ dashed ] [bend right](W5) to (W0);
				\node[right=of X2](Y00){};
				\node[right=of Y00](Y01){};
				\node[right=of Y01](Y02){};
				\node[right=of Y02](Y20){};
				%\node[right=of Y20, circle,draw, inner sep=1pt, label=left:{$1_3$}](E1){};
				\node[right=of Y20](E1){};
				\node[right=of E1](E2){};\node[right=of E2](E3){};
				\node[above=of E1, circle,draw, inner sep=1pt, label=left:{\Large$(z^{-1})_2$}](F0){};
				\node[above=of F0, circle,draw, inner sep=1pt, label=left:{\Large$(z^{-1})_1$}](F1){};
				\node[above=of F1, circle,draw, inner sep=1pt, label=left:{\Large$y_1$}](F2){};
				\node[below=of F0, circle,draw, inner sep=1pt, label=left:{\Large$1_2$}](F3){};
				\node[below=of F3, circle,draw, inner sep=1pt, label=left:{\Large$1_1$}](F4){};
				\node[below=of F4, circle,draw, inner sep=1pt, label=left:{\Large$x_2$}](F5){};
				\node[below=of F5](F50){};
				\node[right=of F50, label=left:{\Large$[\Gamma(1_3)]$}](){};
				\draw[] (F0) to (F1);
				\draw[] (F3) to (F4);
				\draw[] (F4) to (F5);
				%\draw[dashed] (F4) to (F3);
				%\draw[dashed] [bend right](F5) to (F0);
			\end{tikzpicture}
			%\vspace{3mm}
		\end{figure}
		
		Suppose that there exists $\a\in A$ with $(1_2)^{\a}=1_3$. Then $\a$ induces an isomorphism from $[\Gamma(1_2)]$ to $[\Gamma(1_3)]$.
		Hence $(1_1)^{\a}=1_1$, and then as $\a$ stabilizes $\Gamma(1_1)$, we derive that $(1_3)^{\a}=1_2$ and $(z_3)^{\a}=x_2$.
		Consequently,
		\begin{equation}\label{Eq-5}
			|\Gamma(1_3)\cap\Gamma(z_3)|=|\Gamma(1_2)\cap\Gamma(x_2)|\ \text{ and }\ |\Gamma(1_3)\cap\Gamma(z_3)\cap G_1|=|\Gamma(1_2)\cap\Gamma(x_2)\cap G_1|.
		\end{equation}
		However, since $\Gamma(z_3)=\{z_1,(yz)_1,1_1,z_2,(xz)_2,1_2\}$ and $\Gamma(x_2)=\{x_1,1_1,(x^2)_1,x_3,1_3,(zx)_3\}$, we have the following:
		\begin{itemize}
			\item if $|z|=2$ and $|x|=3$, then $\Gamma(1_3)\cap \Gamma(z_3)=\{1_1,z_1,1_2,z_2\}$ and $\Gamma(1_2)\cap \Gamma(x_2)=\{1_1,x_1,(x^2)_1,1_3\}$;
			\item if $|z|=2$ and $|x|>3$, then $\Gamma(1_3)\cap \Gamma(z_3)=\{1_1,z_1,1_2,z_2\}$ and $\Gamma(1_2)\cap \Gamma(x_2)=\{1_1,x_1,1_3\}$;
			\item if $|z|>2$ and $|x|=3$, then $\Gamma(1_3)\cap \Gamma(z_3)=\{1_1,1_2\}$ and $\Gamma(1_2)\cap \Gamma(x_2)=\{1_1,x_1,(x^2)_1,1_3\}$;
			\item if $|z|>2$ and $|x|>3$, then $\Gamma(1_3)\cap \Gamma(z_3)=\{1_1,1_2\}$ and $\Gamma(1_2)\cap \Gamma(x_2)=\{1_1,x_1,1_3\}$.
		\end{itemize}
		None of the possibilities satisfies~\eqref{Eq-5}.
		
		Thus, $A$ stabilizes $G_2$ and $G_3$ respectively. It follows that $A_{1_i}$ fixes $\Gamma(1_i)$ pointwise for $i\in\{1,2,3\}$.
		 Moreover, $A_{1_1}=A_{1_2}=A_{1_3}$. Hence $\Gamma$ is a $3$-HGR of $G$ by Lemma~\ref{Lem=stabilizertrivial}.
		
		\smallskip
		\textsf{Case 2}: $m=4$.
		\smallskip
		
		Let $\Gamma=\Cay(G,(T_{i,j})_{4\times4})$ with $T_{1,2}=T_{1,3}=\{1,x\}$, $T_{1,4}=T_{2,3}=\{1\}$, $T_{2,4}=\{1,y\}$ and $T_{3,4}=\{y,z\}$, and let $A=\Aut(\Gamma)$. Then $\Gamma$ is a $4$-Haar graph of $G$ with valency $5$ and
		\begin{align*}
			\Gamma(1_1)&=\{1_2,x_2,1_3,x_3,1_4\},\\
			\Gamma(1_2)&=\{1_1,(x^{-1})_1,1_3,1_4,y_4\},\\
			\Gamma(1_3)&=\{1_1,(x^{-1})_1,1_2,y_4,z_4\},\\
			\Gamma(1_4)&=\{1_1,1_2,(y^{-1})_2,(y^{-1})_3,(z^{-1})_3\}.
		\end{align*}
		Considering the induced subgraphs $[\Gamma(1_1)]$, $[\Gamma(1_2)]$, $[\Gamma(1_3)]$ and $[\Gamma(1_4)]$, as depicted in Figure~\ref{Fig9}, we see that $[\Gamma(1_i)]\ncong [\Gamma(1_j)]$ for distinct $i$ and $j$ in $\{1,2,3,4\}$. Hence $A$ stabilizes $G_i$ for each $i\in\{1,2,3,4\}$. Then we conclude that $A_{1_i}$ fixes $\Gamma(1_i)$ pointwise for $i\in\{1,2,4\}$, and $A_{1_3}$ fixes each of $1_2$, $y_4$ and $z_4$. In particular, $A_{1_3}\leq A_{1_2}$ fixes $1_1$. This in turn implies that $A_{1_3}$ fixes $\Gamma(1_3)$ pointwise.  Moreover, $A_{1_1}=A_{1_2}=A_{1_3}=A_{1_4}$. Therefore, $\Gamma$ is a $4$-HGR of $G$ by Lemma~\ref{Lem=stabilizertrivial}.
		
		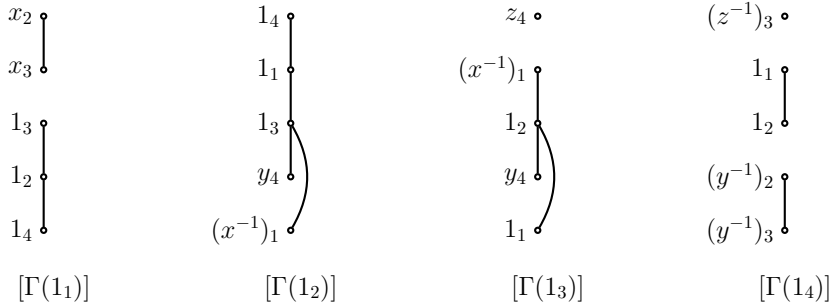
\begin{figure}[!ht]
			\vspace{3mm}
			\caption{The induced subgraph $[\Gamma(1_i)]$ with $i\in\{1,2,3,4\}$}\label{Fig9}
			\begin{tikzpicture}[node distance=0.9cm,thick,scale=0.63,every node/.style={transform shape},scale=1.1]
				\node[](A0){};
				\node[right=of A0](D00){};
				\node[right=of D00](D000){};
				\node[right=of D000](Dp){};
				\node[above=of Dp, circle,draw, inner sep=1pt, label=left:{\Large$x_3$}](D0){};
				\node[above=of D0, circle,draw, inner sep=1pt, label=left:{\Large$x_2$}](D1){};
				\node[below=of D0, circle,draw, inner sep=1pt, label=left:{\Large$1_3$}](D3){};
				\node[below=of D3, circle,draw, inner sep=1pt, label=left:{\Large$1_2$}](D4){};
				\node[below=of D4, circle,draw, inner sep=1pt, label=left:{\Large$1_4$}](D5){};
				\node[below=of D5](D50){};
				\node[right=of D50, label=left:{\Large$[\Gamma(1_1)]$}](){};
				\draw[] (D0) to (D1);
				\draw[] (D3) to (D4);
				\draw[] (D5) to (D4);
				\node[right=of Dp](X0){};
				\node[right=of X0](X01){};
				\node[right=of X01](X02){};
				\node[right=of X02](X00){};
				\node[above=of X00, circle,draw, inner sep=1pt, label=left:{\Large$1_1$}](W0){};
				\node[above=of W0, circle,draw, inner sep=1pt, label=left:{\Large$1_4$}](W1){};
				\node[below=of W0, circle,draw, inner sep=1pt, label=left:{\Large$1_3$}](W3){};
				\node[below=of W3, circle,draw, inner sep=1pt, label=left:{\Large$y_4$}](W4){};
				\node[below=of W4, circle,draw, inner sep=1pt, label=left:{\Large$(x^{-1})_1$}](W5){};
				\node[below=of W5](W50){};
				\node[right=of W50, label=left:{\Large$[\Gamma(1_2)]$}](){};
				\draw[] (W0) to (W1);
				\draw[] (W0) to (W3);
				\draw[] (W4) to (W3);
				\draw [bend left] (W3) to (W5);
				\node[right=of X00](Y0){};
				\node[right=of Y0](Y01){};
				\node[right=of Y01](Y02){};
				\node[right=of Y02](Y00){};
				\node[above=of Y00, circle,draw, inner sep=1pt, label=left:{\Large$(x^{-1})_1$}](F0){};
				\node[above=of F0, circle,draw, inner sep=1pt, label=left:{\Large$z_4$}](F1){};
				\node[below=of F0, circle,draw, inner sep=1pt, label=left:{\Large$1_2$}](F3){};
				\node[below=of F3, circle,draw, inner sep=1pt, label=left:{\Large$y_4$}](F4){};
				\node[below=of F4, circle,draw, inner sep=1pt, label=left:{\Large$1_1$}](F5){};
				\node[below=of F5](F50){};
				\node[right=of F50, label=left:{\Large$[\Gamma(1_3)]$}](){};
				\draw[] (F0) to (F3);
				\draw[] (F3) to (F4);
				\draw [bend left] (F3) to (F5);
				\node[right=of Y00](E0){};
				\node[right=of E0](E01){};
				\node[right=of E01](E02){};
				\node[right=of E02](E00){};
				\node[above=of E00, circle,draw, inner sep=1pt, label=left:{\Large$1_1$}](B0){};
				\node[above=of B0, circle,draw, inner sep=1pt, label=left:{\Large$(z^{-1})_3$}](B1){};
				\node[below=of B0, circle,draw, inner sep=1pt, label=left:{\Large$1_2$}](B3){};
				\node[below=of B3, circle,draw, inner sep=1pt, label=left:{\Large$(y^{-1})_2$}](B4){};
				\node[below=of B4, circle,draw, inner sep=1pt, label=left:{\Large$(y^{-1})_3$}](B5){};
				\node[below=of B5](B50){};
				\node[right=of B50, label=left:{\Large$[\Gamma(1_4)]$}](){};
				\draw[] (B0) to (B3);
				\draw[] (B5) to (B4);
			\end{tikzpicture}
			\vspace{3mm}
		\end{figure}
		
		\smallskip
		\textsf{Case 3}: $m\geq 5$ odd.
		\smallskip
		
		Let $\Sigma=\Cay(G,(T_{i,j})_{3\times3})$ with $T_{1,2}=\{1,x,y\}$, $T_{1,3}=\{1,y,z\}$ and $T_{2,3}=\{1,z\}$.
Then $\Sigma$ satisfies conditions~(a)--(c) of Lemma~\ref{Lem3PGSR} with
		\begin{align*}
			\Sigma(1_1)&=\{1_2,x_2,y_2,1_3,y_3,z_3\},\\
			\Sigma(1_2)&=\{1_1,(x^{-1})_1,(y^{-1})_1,1_3,z_3\},\\
			\Sigma(1_3)&=\{1_1,(y^{-1})_1,(z^{-1})_1,1_2,(z^{-1})_2\}.
		\end{align*}
		One may verify the following statements by the definition of $\Sigma$:
		\begin{itemize}
			\item $[\Sigma(1_1)]$ is a union of an isolated vertex $x_2$, an edge $\{y_2,y_3\}$ and a $2$-path $(1_3,1_2,z_3)$;
			\item $[\Sigma(1_2)]$ is a union of an isolated vertex $(x^{-1})_1$ and a $3$-path $((y^{-1})_1,1_3,1_1,z_3)$;
			\item $[\Sigma(1_3)]$ is a union of an edge $\{(z^{-1})_1,(z^{-1})_2\}$ and a $2$-path $(1_1,1_2,(y^{-1})_1)$.
		\end{itemize}
		Let $A=\Aut(\Sigma)$. We derive from the above statements that $A$ stabilizes $G_i$ for each $i\in\{1,2,3\}$, and then $A_{1_2}$ fixes both $1_1$ and $1_3$, while both $A_{1_1}$ and $A_{1_3}$ fix $1_2$. Hence $A_{1_1}=A_{1_2}=A_{1_3}$, which in turn implies that $A_{1_i}$ fixes $\Sigma(1_i)$ pointwise for $i\in\{1,2,3\}$.
		 Moreover, $A_{1_1}=A_{1_2}=A_{1_3}$.	 Consequently, $\Sigma$ is a $3$-PGSR of $G$ by Lemma~\ref{Lem=stabilizertrivial}. It then follows from Lemma~\ref{Lem3PGSR} that $G$ has an $m$-HGR.
		
		\smallskip
		\textsf{Case 4}: $m\geq 6$ even.
		\smallskip
		
		Let $\Sigma=\Cay(G,(T_{i,j})_{4\times4})$ with $T_{1,2}=\{1,x\}$, $T_{1,3}=\{1,z\}$, $T_{1,4}=\{1\}$, $T_{2,3}=\{x^{-1}\}$, $T_{2,4}=\{1,y\}$ and $T_{3,4}=\{x\}$, and let $A=\Aut(\Sigma)$. It is straightforward to verify that $\Sigma$ satisfies conditions~(a)--(c) of Lemma~\ref{Lem4PGSR} with
		\begin{align*}
			\Sigma(1_1)&=\{1_2,x_2,1_3,z_3,1_4\},\\
			\Sigma(1_2)&=\{1_1,(x^{-1})_1,(x^{-1})_3,1_4,y_4\},\\
			\Sigma(1_3)&=\{1_1,(z^{-1})_1,x_2,x_4\},\\
			\Sigma(1_4)&=\{1_1,1_2,(y^{-1})_2,(x^{-1})_3\},
		\end{align*}
		and the following statements hold:
		\begin{itemize}
			\item $[\Sigma(1_1)]$ is a union of an isolated vertex $z_3$ and two edges $\{1_2,1_4\}$ and $\{x_2,1_3\}$;
			\item $[\Sigma(1_2)]$ is a union of an isolated vertex $y_4$ and a $3$-path $((x^{-1})_1,(x^{-1})_3,1_4,1_1)$;
			\item $[\Sigma(1_3)]$ is a union of an isolated vertex $(z^{-1})_1$ and a $2$-path $(1_1,x_2,x_4)$;
			\item $[\Sigma(1_4)]$ is a union of an isolated vertex $(y^{-1})_2$ and a $2$-path $(1_1,1_2,(x^{-1})_3)$.
		\end{itemize}
		From the above statements we conclude that $A$ stabilizes $G_1$ and $G_2$ respectively.
  	 For each $g\in G$, the vertex $g_3$ has $|T_{1,3}| = 2$ neighbors in $G_1$ whereas $g_4$ has only $|T_{1,4}| = 1$ neighbor in $G_1$. This implies that $A$ stabilizes $G_3$ and $G_4$ respectively.
		 It then follows that $A_{1_i}$ fixes $\Sigma(1_i)$ pointwise for $i\in\{1,2,3,4\}$. In particular, $A_{1_1}=A_{1_2}=A_{1_3}=A_{1_4}$.
		 Hence $\Sigma$ is a $4$-PGSR of $G$ by Lemma~\ref{Lem=stabilizertrivial}, and so Lemma~\ref{Lem4PGSR} asserts that $G$ has an $m$-HGR.
	\end{proof}
	
	The following result summarizes Lemmas~\ref{lemma=d(G)=2} and~\ref{lemma=d(G)=3}.
	
	\begin{proposition}\label{prop=smallranks}
		Let $G$ be a group with $d(G)\leq3$ such that $G\notin\mathcal{G}_0$. Then $G$ has an $m$-HGR for each $m\geq 3$.
	\end{proposition}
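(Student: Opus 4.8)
The plan is to prove this by a clean dichotomy on the minimal size of a generating set $d(G)$, reducing everything to the two lemmas that immediately precede the statement. Since the hypothesis is $d(G)\leq 3$, exactly one of two cases occurs: either $d(G)\leq 2$ or $d(G)=3$. First I would dispose of the case $d(G)\leq 2$: here $G$ satisfies the hypotheses of Lemma~\ref{lemma=d(G)=2} (namely $d(G)\leq 2$ together with $G\notin\mathcal{G}_0$), so that lemma applies verbatim and produces an $m$-HGR of $G$ for every $m\geq 3$. Second, in the case $d(G)=3$, the group $G$ meets the hypotheses of Lemma~\ref{lemma=d(G)=3} (namely $d(G)=3$ and $G\notin\mathcal{G}_0$), which again yields an $m$-HGR of $G$ for every $m\geq 3$. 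Since these two cases exhaust the possibilities under $d(G)\leq 3$, the conclusion holds in all cases.

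I expect no genuine obstacle in this final step: all the substantive work has already been done in establishing Lemmas~\ref{lemma=d(G)=2} and~\ref{lemma=d(G)=3}, where the explicit connection matrices are written down, the resulting $m$-Cayley graphs are checked to be regular $m$-Haar graphs, and the superfluous automorphisms are ruled out through the analysis of induced neighbourhood subgraphs $[\Gamma(1_i)]$ together with Lemma~\ref{Lem=stabilizertrivial}. Thus the only content of the present proposition is the purely logical observation that the ranges $d(G)\leq 2$ and $d(G)=3$ jointly cover the hypothesis $d(G)\leq 3$, and the proposition follows at once as a direct combination of the two lemmas.
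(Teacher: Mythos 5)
Your proposal is correct and matches the paper exactly: the paper states this proposition as a direct summary of Lemmas~\ref{lemma=d(G)=2} and~\ref{lemma=d(G)=3}, covering the cases $d(G)\leq2$ and $d(G)=3$ respectively. There is nothing further to add.
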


	\section{Existence of $m$-HGRs for odd $m$}\label{Sec5}
	
	Throughout this section, let $G$ be a group with $d(G)=t\geq 4$. Note that $G$ is an elementary abelian $2$-group if and only if each minimal generating set of $G$ consists of involutions. Let $\{h_1,h_2,\ldots,h_{t}\}$ be a generating set of $G$ such that $|h_1|\geq3$ whenever $G$ is not an elementary abelian $2$-group. Denote
	\begin{align}
		S&=\{1,h_i\mid1\leq i\leq t\},\label{Eq-5.1}\\
		L&=\{1,h_1,h_2h_1^{-1},h_i\mid3\leq i\leq t\},\label{Eq-5.2}\\
		R&=\{1,h_1,h_ih_{i-1}^{-1}\mid2\leq i\leq t\},\label{Eq-5.3}\\
		T&=\{1,h_1,h_ih_{i-1}^{-1}\mid2\leq i\leq t-1\},\label{Eq-5.4}
	\end{align}
	and define graphs $\Gamma_3$ and $\Sigma_3$ as follows:
	\begin{itemize}
		\item $\Gamma_3=\Cay(G,(T_{i,j})_{3\times 3})$ with $T_{1,2}=S$, $T_{1,3}=L$ and $T_{2,3}=R$;
		\item $\Sigma_3=\Cay(G,(T_{i,j})_{3\times 3})$ with $T_{1,2}=S$, $T_{1,3}=L$ and $T_{2,3}=T$.
	\end{itemize}
	
	\begin{lemma}\label{lem=4.1}
		The graph $\Gamma_3$ is a $3$-HGR of $G$, and the graph $\Sigma_3$ is a $3$-PGSR of $G$ satisfying the conditions of Lemma~$\ref{Lem3PGSR}$.
	\end{lemma}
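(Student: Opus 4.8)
The plan is to deduce both statements from Lemma~\ref{Lem=stabilizertrivial}. Its generation hypothesis is immediate, since $S$ contains the generators $h_1,\dots,h_t$ and hence $\langle S,L,R\rangle=\langle S,L,T\rangle=G$; what remains is to verify, for $A:=\Aut(\Gamma_3)$ and then for $A:=\Aut(\Sigma_3)$, that $A$ stabilizes each part and that $A_{1_i}$ fixes the neighborhood of $1_i$ pointwise for every $i$. I would begin by recording the combinatorial data read off from~\eqref{EqDef}: $\Gamma_3(1_1)=S_2\cup L_3$, $\Gamma_3(1_2)=(S^{-1})_1\cup R_3$, $\Gamma_3(1_3)=(L^{-1})_1\cup(R^{-1})_2$, with the same formulas for $\Sigma_3$ after replacing $R$ by $T$. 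Because $1$ belongs to each of $S,L,R,T$ (see~\eqref{Eq-5.1}--\eqref{Eq-5.4}), the triple $\{g_1,g_2,g_3\}$ is a triangle for every $g\in G$, which furnishes the $3$-cycles demanded by condition~(c) of Lemma~\ref{Lem3PGSR}. A necessary preliminary is to check that $|S|=|L|=|R|=t+1$ and $|T|=t$: any coincidence among the listed elements, such as $h_2h_1^{-1}=h_j$, can be rearranged to express one generator through the remaining ones, contradicting the minimality $d(G)=t\geq4$. The valencies then follow, giving that $\Gamma_3$ is $(2t+2)$-regular and that in $\Sigma_3$ the vertices of $G_1$ have valency $2t+2$ while those of $G_2,G_3$ have valency $2t+1$; this is exactly conditions~(a) and~(b) of Lemma~\ref{Lem3PGSR} with $k=2t+1$, and the bound $2\leq k\leq|G|$ holds because $|G|\geq2^t\geq2t+1$ for $t\geq4$.

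The next step is to show that $A$ fixes each part $G_i$ setwise. For $\Sigma_3$ the part $G_1$ is singled out as the set of vertices of maximal valency, so it is $A$-invariant, and one is left to prevent an automorphism from interchanging $G_2$ and $G_3$; I would do this by comparing a local invariant of the two bipartite graphs joining $G_1$ to $G_2$ and to $G_3$, for instance the distribution of the number of triangles carried by the edges emanating from a fixed vertex in each of the two directions, which differs because the sets $S$ and $L$ differ. For $\Gamma_3$ all three parts have the same valency, and in fact the same number of triangles through each vertex: since every triangle meets each part exactly once, transitivity within the parts forces these per-vertex counts to coincide. Hence for $\Gamma_3$ the parts must instead be separated by the finer structure of the induced neighborhood graphs, equivalently by the triangle data attached to the three edge-classes $S$, $L$ and $R$. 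Establishing this separation uniformly, over all groups $G$ with $d(G)\geq4$ and all possible orders of the $h_i$, and ruling out the accidental coincidences among $S,L,R,T$ and their inverses that could blur the distinction, is the step I expect to be the main obstacle.

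Finally, assuming $A$ fixes every $G_i$, I would prove that $A_{1_i}$ fixes $\Gamma_3(1_i)$ (respectively $\Sigma_3(1_i)$) pointwise. This is where the telescoping design of the connection sets pays off: starting from the fixed vertex $1_i$ together with its diagonal triangle, the chain $h_1,\,h_2h_1^{-1},\,h_3h_2^{-1},\dots$ allows one to identify the neighbors one at a time, so that no nontrivial automorphism can move them. Once each $A_{1_i}$ fixes its own neighborhood pointwise, the mutual adjacency of $1_1,1_2,1_3$ yields $A_{1_1}=A_{1_2}=A_{1_3}$, and Lemma~\ref{Lem=stabilizertrivial} gives $A=G$. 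Applied to $\Gamma_3$ this shows it is a $3$-HGR of $G$; applied to $\Sigma_3$ it shows that $\Sigma_3$ is a $3$-PGSR, which together with the valency computation above confirms that $\Sigma_3$ meets all the conditions of Lemma~\ref{Lem3PGSR}.
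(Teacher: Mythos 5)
Your setup is sound and matches the paper's: the cardinalities $|S|=|L|=|R|=t+1$, $|T|=t$ follow from the minimality of the generating set exactly as you argue, the diagonal triangles $\{g_1,g_2,g_3\}$ give condition~(c), and the valency counts give conditions~(a) and~(b) of Lemma~\ref{Lem3PGSR} with $k=2t+1$. But the proposal stops short of proving the two claims on which everything rests, and these are precisely where all the work in the paper lies. First, the part-separation. You correctly observe that for $\Gamma_3$ all three parts have the same valency and the same per-vertex triangle count, so no crude invariant distinguishes them, and you then defer to ``the finer structure of the induced neighborhood graphs,'' explicitly flagging this as the main obstacle without resolving it. The paper's Step~1 is exactly this resolution: it computes the bipartite graphs $[\Delta(1_1)]$, $[\Delta(1_2)]$, $[\Delta(1_3)]$ in full (Figures~\ref{Fig4} and~\ref{Fig41}), notes that every vertex of $[\Delta(1_1)]$ has valency at most $3$ (which separates $G_1$), and then runs a delicate case analysis --- on whether $|h_1h_2^{-1}|=2$, whether $h_2h_1^{-1}=h_1h_2$, and whether $|h_th_{t-1}^{-1}|$ or $|h_{t-1}h_{t-2}^{-1}|$ equals $2$ --- comparing valency-$1$ vertex counts and neighbor valencies to rule out an automorphism swapping $G_2$ and $G_3$. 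None of this is present or replaceable by the ``triangle distribution'' heuristic you sketch, so the central step is missing.

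Second, your endgame routes through Lemma~\ref{Lem=stabilizertrivial}, whose hypothesis requires $A_{1_i}$ to fix $\Gamma(1_i)$ pointwise for every $i$; but this cannot be established by inspecting the neighborhood graphs alone. For instance, when the optional edges are absent, $1_3$ and $(h_2h_1^{-1},3)$ are interchangeable leaves of $[\Gamma_3(1_1)]$ lying in the same part, so no local argument forces $A_{1_1}$ to fix $\Gamma_3(1_1)$ pointwise; the ``chain'' you invoke does not by itself separate them. The paper does not use Lemma~\ref{Lem=stabilizertrivial} here at all. Instead it proves only the weaker local facts $A_{1_2}=A_{1_3}$ (via the unique valency-$t$ vertex in $[\Delta(1_2)]$ and $[\Delta(1_3)]$) and $A_{1_1}\leq A_{1_3}$ (Step~2, again with a case split on $|h_1|$ and on $h_2h_1^{-1}$ versus $h_1h_2$), then uses transitivity of $A$ on each part to upgrade these to $A_{(g,1)}=A_{(g,2)}=A_{(g,3)}$ for all $g$, and finally propagates fixed vertices globally along the telescoping chain $(h_1,3)\to(h_2,2)\to(h_3,3)\to\cdots$ using the translation structure, concluding $A_{1_1}=1$ because $G=\langle L\rangle$. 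Your outline gestures at this propagation but in the wrong logical order and without the order/transitivity argument that makes it work. In short: the skeleton is right, but the part-preservation case analysis and the correct global trivialization of the stabilizer --- the actual content of the lemma --- are not supplied.
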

	
	\begin{proof}
		It is clear that $\Gamma_3$ is $(2t+2)$-regular and $\Sigma_3$ satisfies conditions~(a)--(c) of Lemma~\ref{Lem3PGSR}.
		Let $\Delta=\Gamma_3$ or $\Sigma_3$. Then
		\begin{align*}
			\Delta(1_1)&=(S\times \{2\})\cup (L\times \{3\}),\\
			\Delta(1_2)&=
			\begin{cases}
				(S^{-1}\times\{1\})\cup (R \times\{3\})&\text{ if }\Delta=\Gamma_3, \\
				(S^{-1}\times\{1\})\cup (T \times\{3\})&\text{ if }\Delta=\Sigma_3,
			\end{cases}\\
			\Delta(1_3)&=
			\begin{cases}
				(L^{-1}\times \{1\})\cup  (R^{-1}\times \{2\})&\text{ if }\Delta=\Gamma_3, \\
				(L^{-1}\times \{1\})\cup  (T^{-1}\times \{2\})&\text{ if }\Delta=\Sigma_3.
			\end{cases}
		\end{align*}
		Consider the subgraph $[\Delta(1_i)]$ induced by $\Delta(1_i)$, where $i\in\{1,2,3\}$. For the bipartite graph $[\Delta(1_1)]$ with parts $S\times \{2\}$ and $L\times \{3\}$, we have
		\begin{align*}
			\Delta(1_2)\cap (L\times \{3\}) &=\{1,h_1,h_2h_1^{-1}\}\times \{3\},\\
			\Delta((h_1,2))\cap (L\times \{3\}) &=
			\begin{cases}
				\{1,h_1\}\times \{3\}&\text{if }|h_1|=2,\\
				\{(h_1,3)\}&\text{if }|h_1|>2,
			\end{cases}\\
			\Delta((h_2,2))\cap (L\times \{3\}) &=
			\begin{cases}
				\{h_1,h_2h_1^{-1},h_3\}\times \{3\} &\text{if }h_2h_1^{-1}=h_1h_2\text{ and }|h_2h_1^{-1}|=2,\\
				\{h_2h_1^{-1},h_3\}\times \{3\} &\text{if }h_2h_1^{-1}=h_1h_2\text{ and }|h_2h_1^{-1}|>2,\\
				\{h_1,h_3\}\times \{3\} &\text{if }h_2h_1^{-1}\neq h_1h_2\text{ and }|h_2h_1^{-1}|=2,\\
				\{(h_3,3)\} &\text{if }h_2h_1^{-1}\neq h_1h_2\text{ and }|h_2h_1^{-1}|>2,
			\end{cases}\\
			\Delta((h_3,2))\cap (L\times \{3\}) &=\{h_3,h_4\}\times \{3\}\ \text{ if }t>4,\\
			\Delta((h_i,2))\cap (L\times \{3\}) &=
			\begin{cases}
				\{h_{i-1},h_i,h_{i+1}\}\times \{3\} &\text{if }|h_ih_{i-1}^{-1}|=2,\\
				\{h_i,h_{i+1}\}\times \{3\} &\text{if }|h_ih_{i-1}^{-1}|>2,
			\end{cases}
			\ \ (4\leq i\leq t-2),\\
			\Delta((h_{t-1},2))\cap (L\times \{3\})&=
			\begin{cases}
				\{h_{\max\{t-2,3\}},h_{t-1},h_t\}\times \{3\} &\text{if }|h_{t-1}h_{t-2}^{-1}|=2\text{ and }\Delta=\Gamma_3,\\
				\{h_{\max\{t-2,3\}},h_{t-1}\}\times \{3\} &\text{if }|h_{t-1}h_{t-2}^{-1}|=2\text{ and }\Delta=\Sigma_3,\\
				\{h_{t-1},h_t\}\times \{3\} &\text{if }|h_{t-1}h_{t-2}^{-1}|>2\text{ and }\Delta=\Gamma_3,\\
				\{(h_{t-1},3)\} &\text{if }|h_{t-1}h_{t-2}^{-1}|>2\text{ and }\Delta=\Sigma_3,\\
			\end{cases}\\
			\Delta((h_t,2))\cap (L\times \{3\})&=
			\begin{cases}
				\{h_t, h_{t-1}\}\times \{3\}&\text{if }|h_th_{t-1}^{-1}|=2\text{ and }\Delta=\Gamma_3,\\
				\{(h_t,3)\} &\text{if }|h_th_{t-1}^{-1}|>2\text{ or }\Delta=\Sigma_3.\\
			\end{cases}
		\end{align*}
		Similarly, for the bipartite graph $[\Delta(1_2)]$ with parts $S^{-1}\times \{1\}$ and $T_{2,3}\times \{3\}$, we have
		\begin{align*}
			\Delta(1_1)\cap (T_{2,3}\times \{3\}) &=\{1,h_1,h_2h_1^{-1}\}\times\{3\},\\
			\Delta((h_1^{-1},1))\cap (T_{2,3}\times \{3\})&=
			\begin{cases}
				\{1,h_1\}\times \{3\}&\text{if }|h_1|=2,\\
				\{1_3\}&\text{if }|h_1|>2,
			\end{cases}\\
		\end{align*}
		\begin{align*}
			\Delta((h_2^{-1},1))\cap (T_{2,3}\times \{3\}) &=
			\begin{cases}
				\{h_1,h_2h_1^{-1},h_3h_2^{-1}\}\times \{3\} &\text{if }h_2h_1^{-1}=h_1h_2\text { and }|h_2h_1^{-1}|=2,\\
				\{h_1,h_3h_2^{-1}\}\times \{3\} &\text{if }h_2h_1^{-1}=h_1h_2\text{ and }|h_2h_1^{-1}|>2,\\
				\{h_2h_1^{-1},h_3h_2^{-1}\} \times \{3\}  &\text{if }h_2h_1^{-1}\neq h_1h_2\text{ and }|h_2h_1^{-1}|=2,\\
				\{h_3h_2^{-1}\}\times \{3\} &\text{if }h_2h_1^{-1}\neq h_1h_2\text{ and }|h_2h_1^{-1}|>2,
			\end{cases}\\
			\Delta((h_3^{-1},1))\cap (T_{2,3}\times \{3\}) &=\{1,h_4h_3^{-1}\}\times \{3\}\ \text{ if }t>4,\\
			\Delta((h_i^{-1},1))\cap (T_{2,3}\times \{3\}) &=
			\begin{cases}
				\{1,h_ih_{i-1}^{-1},h_{i+1}h_i^{-1}\}\times \{3\} &\text{if }|h_ih_{i-1}^{-1}|=2,\\
				\{1,h_{i+1}h_i^{-1}\}\times \{3\} &\text{if }|h_ih_{i-1}^{-1}|>2,
			\end{cases}
			\ \ (4\leq i\leq t-2),\\
			\Delta((h_{t-1}^{-1},1))\cap (T_{2,3}\times \{3\}) &=
			\begin{cases}
				\{1,h_{t-1}h_{\max\{t-2,3\}}^{-1},h_th_{t-1}^{-1}\}\times \{3\} &\text{if }|h_{t-1}h_{t-2}^{-1}|=2\text{ and }\Delta=\Gamma_3,\\
				\{1,h_{t-1}h_{\max\{t-2,3\}}^{-1}\}\times \{3\} &\text{if }|h_{t-1}h_{t-2}^{-1}|=2\text{ and }\Delta=\Sigma_3,\\
				\{1,h_th_{t-1}^{-1}\}\times \{3\} &\text{if }|h_{t-1}h_{t-2}^{-1}|>2\text{ and }\Delta=\Gamma_3,\\
				\{1_3\} &\text{if }|h_{t-1}h_{t-2}^{-1}|>2\text{ and }\Delta=\Sigma_3,
			\end{cases}\\
			\Delta((h_t^{-1},1))\cap (T_{2,3}\times \{3\}) &=
			\begin{cases}
				\{1,h_th_{t-1}^{-1}\}\times \{3\} &\text{if } |h_{t}h_{t-1}^{-1}|=2\text{ and }\Delta=\Gamma_3,\\
				\{1_3\} &\text{if }|h_{t}h_{t-1}^{-1}|>2\text{ or }\Delta=\Sigma_3.\\
			\end{cases}
		\end{align*}

		Moreover, for the bipartite graph $[\Delta(1_3)]$ with parts $L^{-1}\times \{1\}$ and $T_{2,3}^{-1}\times \{2\}$, we have
		\begin{align*}
			\Delta(1_1)\cap (T_{2,3}^{-1}\times \{2\}) &=
			\begin{cases}
				\{1,h_1^{-1}\}\times \{2\}&\text{if }|h_1|=2,\\
				\{1_2\}&\text{if }|h_1|>2,
			\end{cases}\\
			\Delta((h_1^{-1},1))\cap (T_{2,3}^{-1}\times \{2\}) &
			=\begin{cases}
				\{1,h_1^{-1},h_1h_2^{-1}\}\times \{2\} &\text{if }|h_2h_1^{-1}|=2,\\
				\{1,h_1^{-1}\}\times \{2\} &\text{if }|h_2h_1^{-1}|>2,
			\end{cases}\\
			\Delta((h_1h_2^{-1},1))\cap (T_{2,3}^{-1}\times \{2\}) &=
			\begin{cases}
				\{h_1^{-1},h_1h_2^{-1}\}\times \{2\} &\text{if }h_2h_1^{-1}=h_1h_2,\\
				\{h_1h_2^{-1}\}\times \{2\} &\text{if }h_2h_1^{-1}\neq h_1h_2,
			\end{cases}\\
			\Delta((h_i^{-1},1))\cap (T_{2,3}^{-1}\times \{2\}) &=
			\begin{cases}
				\{1,h_{i-1}h_i^{-1},h_ih_{i+1}^{-1}\}\times \{2\} &\text{if }|h_{i+1}h_{i}^{-1}|=2,\\
				\{1,h_{i-1}h_i^{-1}\}\times \{2\} &\text{if }|h_{i+1}h_{i}^{-1}|>2,
			\end{cases}
			\ \ (3\leq i\leq t-2),\\
			\Delta((h_{t-1}^{-1},1))\cap (T_{2,3}^{-1}\times \{2\}) &=
			\begin{cases}
				\{1,h_{t-2}h_{t-1}^{-1},h_{t-1}h_{t}^{-1}\}\times \{2\} &\text{if }|h_{t}h_{t-1}^{-1}|=2\text{ and }\Delta=\Gamma_3,\\
				\{1,h_{t-2}h_{t-1}^{-1}\}\times \{2\} &\text{if }|h_{t}h_{t-1}^{-1}|>2\text{ or }\Delta=\Sigma_3,
			\end{cases}\\
			\Delta((h_t^{-1},1))\cap (T_{2,3}^{-1}\times \{2\}) &=
			\begin{cases}
				\{1,h_{t-1}h_t ^{-1}\}\times \{2\} &\text{if }\Delta=\Gamma_3,\\
				\{1_2\} &\text{if }\Delta=\Sigma_3.\\
			\end{cases}
		\end{align*}
		For $i\in \{1,2,3\}$, the structure of $[\Delta(1_i)]$ is depicted in Figure~\ref{Fig4} or Figure~\ref{Fig41} for $\Delta=\Gamma_3$ or $\Delta=\Sigma_3$ respectively (the depicted are when $t\geq5$, but those when $t=4$ are similar), where the dashed are the possible edges according to whether $|h_1|=2$, $h_2h_1^{-1}=h_1h_2$ or $|h_ih_{i-1}^{-1}|=2$. Let $A=\Aut(\Delta)$.
		It is evident that $1_3$ is the unique vertex of valency $t$ in $[\Delta(1_2)]$. This implies that $A_{1_2}\leq A_{1_3}$. Similarly, $1_2$ is the unique vertex of valency $t$ in $[\Delta(1_3)]$, and so $A_{1_3}\leq A_{1_2}$. Therefore,
		\[
		A_{1_2}=A_{1_3}.
		\]
		The lemma is proved along the three steps as follows.
		
		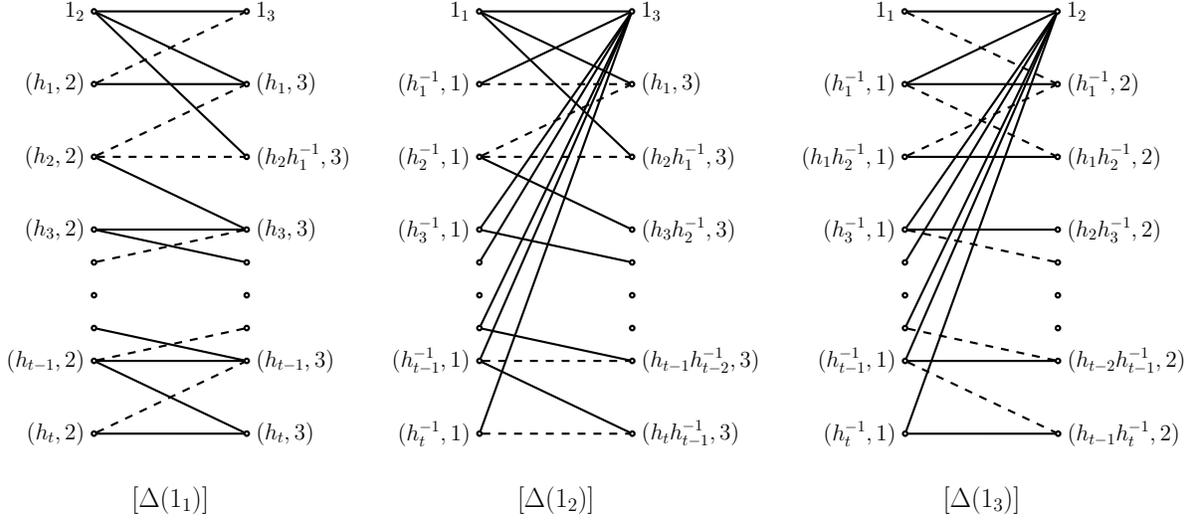
\begin{figure}[!ht]
			\vspace{3mm}
			\caption{The induced subgraphs $[\Delta(1_i)]$ with $\Delta=\Gamma_3$ and $i\in\{1,2,3\}$}\label{Fig4}
			\begin{tikzpicture}[node distance=0.6cm,thick,scale=0.6,every node/.style={transform shape},scale=1]
				\node[circle](A0){};
				\node[below=of A0, circle,draw, inner sep=1pt, label=left:{\Large$1_2$}](A1){};
				\node[below=of A1](A11){};
				\node[below=of A11, circle,draw, inner sep=1pt, label=left:{\Large$(h_1,2)$}](A2){};
				\node[below=of A2](A21){};
				\node[below=of A21, circle,draw, inner sep=1pt, label=left:{\Large$(h_2,2)$}](A3){};
				\node[below=of A3](A31){};
				\node[below=of A31, circle,draw, inner sep=1pt, label=left:{\Large$(h_3,2)$}](A4){};
				\node[below=of A4, circle,draw, inner sep=1pt, label=left:{}](A5){};
				\node[below=of A5, circle,draw, inner sep=1pt, label=left:{}](A6){};
				\node[below=of A6, circle,draw, inner sep=1pt, label=left:{}](A7){};
				\node[below=of A7, circle,draw, inner sep=1pt, label=left:{\Large$(h_{t-1},2)$}](A8){};
				\node[below=of A8](A81){};
				\node[below=of A81, circle,draw, inner sep=1pt, label=left:{\Large$(h_{t},2)$}](A9){};
				\node[right=of A1](A00){};
				\node[right=of A00](A000){};
				\node[right=of A000](A0000){};
				\node[right=of A0000, circle,draw, inner sep=1pt, label=right:{\Large$1_3$}](B1){};
				\node[below=of B1](B11){};
				\node[below=of B11, circle,draw, inner sep=1pt, label=right:{\Large$(h_1,3)$}](B2){};
				\node[below=of B2](B21){};
				\node[below=of B21, circle,draw, inner sep=1pt, label=right:{\Large$(h_2h_1^{-1},3)$}](B3){};
				\node[below=of B3](B31){};
				\node[below=of B31, circle,draw, inner sep=1pt, label=right:{\Large$(h_3,3)$}](B4){};
				\node[below=of B4, circle,draw, inner sep=1pt, label=left:{}](B5){};
				\node[below=of B5, circle,draw, inner sep=1pt, label=left:{}](B6){};
				\node[below=of B6, circle,draw, inner sep=1pt, label=left:{}](B7){};
				\node[below=of B7, circle,draw, inner sep=1pt, label=right:{\Large$(h_{t-1},3)$}](B8){};
				\node[below=of B8](B81){};
				\node[below=of B81, circle,draw, inner sep=1pt, label=right:{\Large$(h_{t},3)$}](B9){};
				\node[right=of A9](A90){};
				\node[below=of A90](A91){};
				\node[right=of A91,label=below:{\LARGE$[\Delta(1_1)]$}]{};
				\draw[] (A1) to (B1);
				\draw[] (A1) to (B2);
				\draw[] (A1) to (B3);
				\draw[] (A2) to (B2);
				\draw[dashed] (A2) to (B1);
				\draw[dashed] (A3) to (B2);
				\draw[dashed] (A3) to (B3);
				\draw[dashed] (A5) to (B4);
				\draw[] (A3) to (B4);
				\draw[] (A4) to (B4);
				\draw[] (A4) to (B5);
				\draw[] (A7) to (B8);
				\draw[] (A8) to (B8);
				\draw[] (A8) to (B9);
				\draw[dashed] (A8) to (B7);
				\draw[] (A9) to (B9);
				\draw[dashed] (A9) to (B8);
				\node[right=of B1](B11){};
				\node[right=of B11](B111){};
				\node[right=of B111](B1111){};
				\node[right=of B1111](B11111){};
				\node[right=of B11111](B111111){};
				\node[right=of B111111, circle,draw, inner sep=1pt, label=left:{\Large$1_1$}](C1){};
				\node[below=of C1](C11){};
				\node[below=of C11, circle,draw, inner sep=1pt, label=left:{\Large$(h_1^{-1},1)$}](C2){};
				\node[below=of C2](C21){};
				\node[below=of C21, circle,draw, inner sep=1pt, label=left:{\Large$(h_2^{-1},1)$}](C3){};
				\node[below=of C3](C31){};
				\node[below=of C31, circle,draw, inner sep=1pt, label=left:{\Large$(h_3^{-1},1)$}](C4){};
				\node[below=of C4, circle,draw, inner sep=1pt, label=left:{}](C5){};
				\node[below=of C5, circle,draw, inner sep=1pt, label=left:{}](C6){};
				\node[below=of C6, circle,draw, inner sep=1pt, label=left:{}](C7){};
				\node[below=of C7, circle,draw, inner sep=1pt, label=left:{\Large$(h_{t-1}^{-1},1)$}](C8){};
				\node[below=of C8](C81){};
				\node[below=of C81, circle,draw, inner sep=1pt, label=left:{\Large$(h_{t}^{-1},1)$}](C9){};
				\node[right=of C1](C00){};
				\node[right=of C00](C000){};
				\node[right=of C000](C0000){};
				\node[right=of C0000, circle,draw, inner sep=1pt, label=right:{\Large$1_3$}](D1){};
				\node[below=of D1](D11){};
				\node[below=of D11, circle,draw, inner sep=1pt, label=right:{\Large$(h_1,3)$}](D2){};
				\node[below=of D2](D21){};
				\node[below=of D21, circle,draw, inner sep=1pt, label=right:{\Large$(h_2h_1^{-1},3)$}](D3){};
				\node[below=of D3](D31){};
				\node[below=of D31, circle,draw, inner sep=1pt, label=right:{\Large$(h_3h_2^{-1},3)$}](D4){};
				\node[below=of D4, circle,draw, inner sep=1pt, label=left:{}](D5){};
				\node[below=of D5, circle,draw, inner sep=1pt, label=left:{}](D6){};
				\node[below=of D6, circle,draw, inner sep=1pt, label=left:{}](D7){};
				\node[below=of D7, circle,draw, inner sep=1pt, label=right:{\Large$(h_{t-1}h_{t-2}^{-1},3)$}](D8){};
				\node[below=of D8](D81){};
				\node[below=of D81, circle,draw, inner sep=1pt, label=right:{\Large$(h_{t}h_{t-1}^{-1},3)$}](D9){};
				\node[right=of C9](C90){};
				\node[below=of C90](C91){};
				\node[right=of C91,label=below:{\LARGE$[\Delta(1_2)]$}]{};
				\draw[] (C1) to (D1);
				\draw[] (C1) to (D2);
				\draw[] (C1) to (D3);
				\draw[] (C2) to (D1);
				\draw[dashed](C2) to (D2);
				\draw[dashed](C3) to (D2);
				\draw[dashed] (C3) to (D3);
				\draw[] (C3) to (D4);
				\draw[] (C4) to (D1);
				\draw (C4) to (D5);
				\draw (C5) to (D1);
				\draw (C7) to (D1);
				\draw (C7) to (D8);
				\draw[] (C8) to (D1);
				\draw[dashed] (C8) to (D8);
				\draw[] (C8) to (D9);
				\draw[] (C9) to (D1);
				\draw[dashed] (C9) to (D9);
				\node[right=of D1](D11){};
				\node[right=of D11](D111){};
				\node[right=of D111](D1111){};
				\node[right=of D111](D1111){};
				\node[right=of D1111](D11111){};
				\node[right=of D11111](D111111){};
				\node[right=of D111111](D1111111){};
				\node[right=of D1111111, circle,draw, inner sep=1pt, label=left:{\Large$1_1$}](E1){};
				\node[below=of E1](E11){};
				\node[below=of E11, circle,draw, inner sep=1pt, label=left:{\Large$(h_1^{-1},1)$}](E2){};
				\node[below=of E2](E21){};
				\node[below=of E21, circle,draw, inner sep=1pt, label=left:{\Large$(h_1h_2^{-1},1)$}](E3){};
				\node[below=of E3](E31){};
				\node[below=of E31, circle,draw, inner sep=1pt, label=left:{\Large$(h_3^{-1},1)$}](E4){};
				\node[below=of E4, circle,draw, inner sep=1pt, label=left:{}](E5){};
				\node[below=of E5, circle,draw, inner sep=1pt, label=left:{}](E6){};
				\node[below=of E6, circle,draw, inner sep=1pt, label=left:{}](E7){};
				\node[below=of E7, circle,draw, inner sep=1pt, label=left:{\Large$(h_{t-1}^{-1},1)$}](E8){};
				\node[below=of E8](E81){};
				\node[below=of E81, circle,draw, inner sep=1pt, label=left:{\Large$(h_{t}^{-1},1)$}](E9){};
				\node[right=of E1](E0){};
				\node[right=of E0](E00){};
				\node[right=of E00](E000){};
				\node[right=of E000](E0000){};
				\node[right=of E0000](E00000){};
				\node[right=of E000, circle,draw, inner sep=1pt, label=right:{\Large$1_2$}](F1){};
				\node[below=of F1](F11){};
				\node[below=of F11, circle,draw, inner sep=1pt, label=right:{\Large$(h_1^{-1},2)$}](F2){};
				\node[below=of F2](F21){};
				\node[below=of F21, circle,draw, inner sep=1pt, label=right:{\Large$(h_1h_2^{-1},2)$}](F3){};
				\node[below=of F3](F31){};
				\node[below=of F31, circle,draw, inner sep=1pt, label=right:{\Large$(h_2h_3^{-1},2)$}](F4){};
				\node[below=of F4, circle,draw, inner sep=1pt, label=left:{}](F5){};
				\node[below=of F5, circle,draw, inner sep=1pt, label=left:{}](F6){};
				\node[below=of F6, circle,draw, inner sep=1pt, label=left:{}](F7){};
				\node[below=of F7, circle,draw, inner sep=1pt, label=right:{\Large$(h_{t-2}h_{t-1}^{-1},2)$}](F8){};
				\node[below=of F8](F81){};
				\node[below=of F81, circle,draw, inner sep=1pt, label=right:{\Large$(h_{t-1}h_{t}^{-1},2)$}](F9){};
				\node[right=of E9](E90){};
				\node[below=of E90](E91){};
				\node[right=of E91,label=below:{\LARGE$[\Delta(1_3)]$}]{};
				\draw[] (E1) to (F1);
				\draw[] (E2) to (F1);
				\draw[] (E2) to (F2);
				\draw[dashed] (E1) to (F2);
				\draw[dashed] (E2) to (F3);
				\draw[dashed](E3) to (F2);
				\draw[] (E3) to (F3);
				\draw[] (E4) to (F1);
				\draw[] (E4) to (F4);
				\draw[dashed](E4) to (F5);
				\draw (E5) to (F1);
				\draw (E7) to (F1);
				\draw[dashed] (E7) to (F8);
				\draw[dashed](E8) to (F9);
				\draw[] (E8) to (F1);
				\draw[] (E8) to (F8);
				\draw[] (E9) to (F1);
				\draw[] (E9) to (F9);
			\end{tikzpicture}
			\vspace{3mm}
		\end{figure}
		
		\begin{figure}[!ht]
			\vspace{3mm}
			\caption{The induced subgraphs $[\Delta(1_i)]$ with $\Delta=\Sigma_3$ and $i\in\{1,2,3\}$}\label{Fig41}
			\begin{tikzpicture}[node distance=0.6cm,thick,scale=0.6,every node/.style={transform shape},scale=1]
				\node[circle](A0){};
				\node[below=of A0, circle,draw, inner sep=1pt, label=left:{\Large$1_2$}](A1){};
				\node[below=of A1](A11){};
				\node[below=of A11, circle,draw, inner sep=1pt, label=left:{\Large$(h_1,2)$}](A2){};
				\node[below=of A2](A21){};
				\node[below=of A21, circle,draw, inner sep=1pt, label=left:{\Large$(h_2,2)$}](A3){};
				\node[below=of A3](A31){};
				\node[below=of A31, circle,draw, inner sep=1pt, label=left:{\Large$(h_3,2)$}](A4){};
				\node[below=of A4, circle,draw, inner sep=1pt, label=left:{}](A5){};
				\node[below=of A5, circle,draw, inner sep=1pt, label=left:{}](A6){};
				\node[below=of A6, circle,draw, inner sep=1pt, label=left:{}](A7){};
				\node[below=of A7, circle,draw, inner sep=1pt, label=left:{\Large$(h_{t-1},2)$}](A8){};
				\node[below=of A8](A81){};
				\node[below=of A81, circle,draw, inner sep=1pt, label=left:{\Large$(h_{t},2)$}](A9){};
				\node[right=of A1](A00){};
				\node[right=of A00](A000){};
				\node[right=of A000](A0000){};
				\node[right=of A0000, circle,draw, inner sep=1pt, label=right:{\Large$1_3$}](B1){};
				\node[below=of B1](B11){};
				\node[below=of B11, circle,draw, inner sep=1pt, label=right:{\Large$(h_1,3)$}](B2){};
				\node[below=of B2](B21){};
				\node[below=of B21, circle,draw, inner sep=1pt, label=right:{\Large$(h_2h_1^{-1},3)$}](B3){};
				\node[below=of B3](B31){};
				\node[below=of B31, circle,draw, inner sep=1pt, label=right:{\Large$(h_3,3)$}](B4){};
				\node[below=of B4, circle,draw, inner sep=1pt, label=left:{}](B5){};
				\node[below=of B5, circle,draw, inner sep=1pt, label=left:{}](B6){};
				\node[below=of B6, circle,draw, inner sep=1pt, label=left:{}](B7){};
				\node[below=of B7, circle,draw, inner sep=1pt, label=right:{\Large$(h_{t-1},3)$}](B8){};
				\node[below=of B8](B81){};
				\node[below=of B81, circle,draw, inner sep=1pt, label=right:{\Large$(h_{t},3)$}](B9){};
				\node[right=of A9](A90){};
				\node[below=of A90](A91){};
				\node[right=of A91,label=below:{\LARGE$[\Delta(1_1)]$}]{};
				\draw[] (A1) to (B1);
				\draw[] (A1) to (B2);
				\draw[] (A1) to (B3);
				\draw[] (A2) to (B2);
				\draw[dashed] (A2) to (B1);
				\draw[dashed] (A3) to (B2);
				\draw[dashed] (A3) to (B3);
				\draw[] (A3) to (B4);
				\draw[] (A4) to (B4);
				\draw[dashed] (A5) to (B4);
				\draw[] (A4) to (B5);
				\draw[] (A7) to (B8);
				\draw[] (A8) to (B8);
				%\draw[] (A8) to (B9);
				\draw[dashed] (A8) to (B7);
				\draw[] (A9) to (B9);
				%\draw[dashed] (A9) to (B8);
				\node[right=of B1](B11){};
				\node[right=of B11](B111){};
				\node[right=of B111](B1111){};
				\node[right=of B1111](B11111){};
				\node[right=of B11111](B111111){};
				\node[right=of B111111, circle,draw, inner sep=1pt, label=left:{\Large$1_1$}](C1){};
				\node[below=of C1](C11){};
				\node[below=of C11, circle,draw, inner sep=1pt, label=left:{\Large$(h_1^{-1},1)$}](C2){};
				\node[below=of C2](C21){};
				\node[below=of C21, circle,draw, inner sep=1pt, label=left:{\Large$(h_2^{-1},1)$}](C3){};
				\node[below=of C3](C31){};
				\node[below=of C31, circle,draw, inner sep=1pt, label=left:{\Large$(h_3^{-1},1)$}](C4){};
				\node[below=of C4, circle,draw, inner sep=1pt, label=left:{}](C5){};
				\node[below=of C5, circle,draw, inner sep=1pt, label=left:{}](C6){};
				\node[below=of C6, circle,draw, inner sep=1pt, label=left:{}](C7){};
				\node[below=of C7, circle,draw, inner sep=1pt, label=left:{\Large$(h_{t-1}^{-1},1)$}](C8){};
				\node[below=of C8](C81){};
				\node[below=of C81, circle,draw, inner sep=1pt, label=left:{\Large$(h_{t}^{-1},1)$}](C9){};
				\node[right=of C1](C00){};
				\node[right=of C00](C000){};
				\node[right=of C000](C0000){};
				\node[right=of C0000, circle,draw, inner sep=1pt, label=right:{\Large$1_3$}](D1){};
				\node[below=of D1](D11){};
				\node[below=of D11, circle,draw, inner sep=1pt, label=right:{\Large$(h_1,3)$}](D2){};
				\node[below=of D2](D21){};
				\node[below=of D21, circle,draw, inner sep=1pt, label=right:{\Large$(h_2h_1^{-1},3)$}](D3){};
				\node[below=of D3](D31){};
				\node[below=of D31, circle,draw, inner sep=1pt, label=right:{\Large$(h_3h_2^{-1},3)$}](D4){};
				\node[below=of D4, circle,draw, inner sep=1pt, label=left:{}](D5){};
				\node[below=of D5, circle,draw, inner sep=1pt, label=left:{}](D6){};
				\node[below=of D6, circle,draw, inner sep=1pt, label=left:{}](D7){};
				\node[below=of D7, circle,draw, inner sep=1pt, label=right:{\Large$(h_{t-1}h_{t-2}^{-1},3)$}](D8){};
				\node[below=of D8](D81){};
				%\node[below=of D81, circle,draw, inner sep=1pt, label=right:{$(h_{t}h_{t-1}^{-1},3)$}](D9){};
				\node[right=of C9](C90){};
				\node[below=of C90](C91){};
				\node[right=of C91,label=below:{\LARGE$[\Delta(1_2)]$}]{};
				\draw[] (C1) to (D1);
				\draw[] (C1) to (D2);
				\draw[dashed] (C2) to (D2);
				\draw[] (C1) to (D3);
				\draw[] (C2) to (D1);
				\draw[dashed](C3) to (D2);
				\draw[dashed] (C3) to (D3);
				\draw[] (C3) to (D4);
				\draw[] (C4) to (D1);
				\draw (C4) to (D5);
				\draw (C5) to (D1);
				\draw (C7) to (D1);
				\draw[] (C7) to (D8);
				\draw[] (C8) to (D1);
				\draw[dashed] (C8) to (D8);
				%\draw[] (C8) to (D9);
				\draw[] (C9) to (D1);
				%\draw[dashed] (C9) to (D9);
				\node[right=of D1](D11){};
				\node[right=of D11](D111){};
				\node[right=of D111](D1111){};
				\node[right=of D111](D1111){};
				\node[right=of D1111](D11111){};
				\node[right=of D11111](D111111){};
				\node[right=of D111111](D1111111){};
				\node[right=of D1111111, circle,draw, inner sep=1pt, label=left:{\Large$1_1$}](E1){};
				\node[below=of E1](E11){};
				\node[below=of E11, circle,draw, inner sep=1pt, label=left:{\Large$(h_1^{-1},1)$}](E2){};
				\node[below=of E2](E21){};
				\node[below=of E21, circle,draw, inner sep=1pt, label=left:{\Large$(h_1h_2^{-1},1)$}](E3){};
				\node[below=of E3](E31){};
				\node[below=of E31, circle,draw, inner sep=1pt, label=left:{\Large$(h_3^{-1},1)$}](E4){};
				\node[below=of E4, circle,draw, inner sep=1pt, label=left:{}](E5){};
				\node[below=of E5, circle,draw, inner sep=1pt, label=left:{}](E6){};
				\node[below=of E6, circle,draw, inner sep=1pt, label=left:{}](E7){};
				\node[below=of E7, circle,draw, inner sep=1pt, label=left:{\Large$(h_{t-1}^{-1},1)$}](E8){};
				\node[below=of E8](E81){};
				\node[below=of E81, circle,draw, inner sep=1pt, label=left:{\Large$(h_{t}^{-1},1)$}](E9){};
				\node[right=of E1](E0){};
				\node[right=of E0](E00){};
				\node[right=of E00](E000){};
				\node[right=of E000](E0000){};
				\node[right=of E0000](E00000){};
				\node[right=of E000, circle,draw, inner sep=1pt, label=right:{\Large$1_2$}](F1){};
				\node[below=of F1](F11){};
				\node[below=of F11, circle,draw, inner sep=1pt, label=right:{\Large$(h_1^{-1},2)$}](F2){};
				\node[below=of F2](F21){};
				\node[below=of F21, circle,draw, inner sep=1pt, label=right:{\Large$(h_1h_2^{-1},2)$}](F3){};
				\node[below=of F3](F31){};
				\node[below=of F31, circle,draw, inner sep=1pt, label=right:{\Large$(h_2h_3^{-1},2)$}](F4){};
				\node[below=of F4, circle,draw, inner sep=1pt, label=left:{}](F5){};
				\node[below=of F5, circle,draw, inner sep=1pt, label=left:{}](F6){};
				\node[below=of F6, circle,draw, inner sep=1pt, label=left:{}](F7){};
				\node[below=of F7, circle,draw, inner sep=1pt, label=right:{\Large$(h_{t-2}h_{t-1}^{-1},2)$}](F8){};
				\node[below=of F8](F81){};
				%\node[below=of F81, circle,draw, inner sep=1pt, label=right:{$(h_{t-1}h_{t}^{-1},1)$}](F9){};
				\node[right=of E9](E90){};
				\node[below=of E90](E91){};
				\node[right=of E91,label=below:{\LARGE$[\Delta(1_3)]$}]{};
				\draw[] (E1) to (F1);
				\draw[dashed] (E1) to (F2);
				\draw[] (E2) to (F1);
				\draw[] (E2) to (F2);
				\draw[dashed] (E2) to (F3);
				\draw[dashed](E3) to (F2);
				\draw[] (E3) to (F3);
				\draw[] (E4) to (F1);
				\draw[] (E4) to (F4);
				\draw (E5) to (F1);
				\draw (E7) to (F1);
				\draw[dashed](E4) to (F5);
				\draw[dashed] (E7) to (F8);
				\draw[] (E8) to (F1);
				\draw[] (E8) to (F8);
				\draw[] (E9) to (F1);
			\end{tikzpicture}
			\vspace{3mm}
		\end{figure}
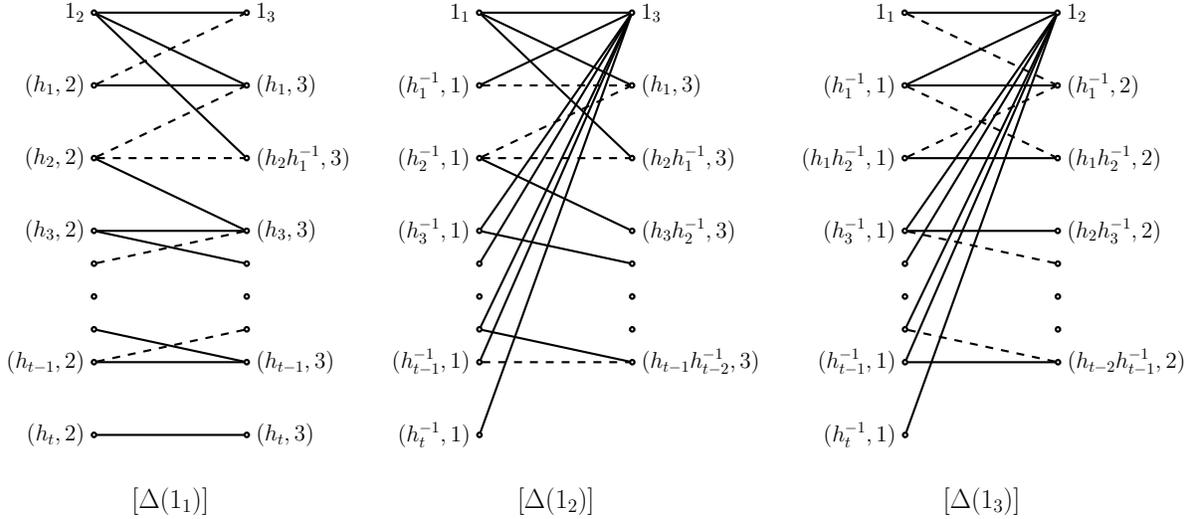
		
		\smallskip
		\textsf{Step 1}: prove that $A$ stabilizes $G_i$ for each $i\in\{1,2,3\}$.
		\smallskip
		
		Since the vertices in $[\Delta(1_1)]$ all have valency at most $3$, we have $[\Delta(1_1)]\ncong[\Delta(1_i)]$ for $i\in\{2,3\}$. Hence $A$ stabilizes $G_1$. To complete Step~1, suppose for a contradiction that exists $\alpha\in A$ with $(1_2)^{\alpha}=1_3$.
		
		As $1_{5-i}$ is the unique vertex of valency $t$ in $[\Delta(1_i)]$ for $i\in\{2,3\}$, it follows that $(1_3)^{\alpha}=1_2$.
		Hence $\alpha$ swaps $\Delta(1_2)$ and $\Delta(1_3)$.
		Observe that $(h_2^{-1},1)$ is the unique vertex in $G_1$ not adjacent to $1_3$ in $[\Delta(1_2)]$, and that
		$(h_1h_2^{-1},1)$ is the unique vertex in $G_1$ not adjacent to $1_2$ in $[\Delta(1_3)]$. We then obtain $(h_2^{-1},1)^{\alpha}=(h_1h_2^{-1},1)$. If $|h_1h_2^{-1}|=2$, then no matter $h_2h_1^{-1}$ equals $h_1h_2$ or not, the valency of $(h_2^{-1},1)$ in $[\Delta(1_2)]$ is always one larger than the valency of $(h_1h_2^{-1},1)$ in $[\Delta(1_3)]$, not possible. Thus,
		\[
		|h_1h_2^{-1}|>2.
		\]
		Note that this also implies $|h_1|>2$, as $|h_1|=2$ if and only if $G$ is an elementary abelian $2$-group. If $h_2h_1^{-1}=h_1h_2$, then the two neighbors of $(h_2^{-1},1)$ in $[\Delta(1_2)]$ are $(h_1,3)$ and $(h_3h_2^{-1},3)$, and the two neighbors of $(h_1h_2^{-1},1)$ in $[\Delta(1_3)]$ are $(h_1^{-1},2)$ and $(h_1h_2^{-1},2)$. However, in this case, $(h_1,3)$ has a neighbor of valency $3$ in $[\Delta(1_2)]$, while neither $(h_1^{-1},2)$ nor $(h_1h_2^{-1},2)$ has a neighbor of valency $3$ in $[\Delta(1_3)]$, again impossible. Therefore, $h_2h_1^{-1}\neq h_1h_2$.
		
		First assume $\Delta=\Gamma_3$. If $|h_th_{t-1}^{-1}|>2$, then $[\Delta(1_2)]$ has exactly three vertices of valency $1$ in $G_1$, while $[\Delta(1_3)]$ has exactly two vertices of valency $1$ in $G_1$, not possible. Thereby we have $|h_th_{t-1}^{-1}|=2$. Now $(h_1^{-1},1)$ and $(h_2^{-1},1)$ are the two vertices in $G_1$ that has valency $1$ in $[\Delta(1_2)]$, while $1_1$ and $(h_1h_2^{-1},1)$ are the two vertices in $G_1$ that has valency $1$ in $[\Delta(1_3)]$. Hence $\alpha$ maps $(h_1^{-1},1)$ to $1_1$ as $(h_2^{-1},1)^{\a}=(h_1h_2^{-1},1)$. However, $(h_1^{-1},1)$ has valency $2$ in $[\Delta(1_3)]$, but $1_1$ has valency $3$ in $[\Delta(1_2)]$. This contradicts the conclusion that $\alpha$ maps $\Delta(1_3)$ to $\Delta(1_2)$.
		
		Next let $\Delta=\Sigma_3$. If $|h_{t-1}h_{t-2}^{-1}|>2$, then $[\Delta(1_2)]$ has exactly four vertices of valency $1$ in $G_1$, but $[\Delta(1_3)]$ has exactly three vertices of valency $1$ in $G_1$, not possible. Thus, $|h_{t-1}h_{t-2}^{-1}|=2$.
		It follows that $(h_1^{-1},1)$ and $(h_t^{-1},1)$ are the only vertices in $G_1$ that have valency $1$ in $[\Delta(1_2)]$, while $1_1$ and $(h_t^{-1},1)$ are the only vertices in $G_1$ that have valency $1$ in $[\Delta(1_3)]$. Hence $\alpha$ maps $\{(h_1^{-1},1),(h_t^{-1},1)\}$ to $\{1_1,(h_t^{-1},1)\}$. However, $(h_1^{-1},1)$ and $(h_t^{-1},1)$ both have valency smaller than $3$ in $[\Delta(1_3)]$, while $1_1$ has valency $3$ in $[\Delta(1_2)]$. This contradicts the conclusion that $\alpha$ maps $\Delta(1_3)$ to $\Delta(1_2)$. We then conclude that there exists no $\alpha\in A$ with $(1_2)^{\alpha}=1_3$, and so $A$ stabilizes $G_i$ for each $i\in\{1,2,3\}$.
		
		\smallskip
		\textsf{Step 2}: prove $A_{1_1}\leq A_{1_3}$.
		\smallskip
		
		First assume that $|h_1|=2$, that is, $G$ is an elementary abelian $2$-group. If $\Delta=\Gamma_3$, then $1_3$, $(h_2h_1^{-1},3)$ and $(h_t,3)$ are the only three vertices in $G_3$ of valency $2$ in $[\Delta(1_1)]$. If $\Delta=\Sigma_3$, then $1_3$, $(h_2h_1^{-1},3)$ and $(h_{t-1},3)$ are the only three vertices in $G_3$ of valency $2$ in $[\Delta(1_1)]$.
		In either case, among the three vertices, only $(h_2h_1^{-1},3)$ has two neighbors in $G_2$ of valency $3$ in $[\Delta(1_1)]$, and only $1_3$ and $(h_2h_1^{-1},3)$ have a common neighbor in $[\Delta(1_1)]$. Therefore, $A_{1_1}$ fixes $1_3$, as required.
		
		Next assume that $|h_1|>2$. If $h_2h_1^{-1}=h_1h_2$, then $1_3$ is the unique vertex of valency $1$ in $[\Delta(1_1)]$ that has a neighbor of valency $3$ in $[\Delta(1_1)]$, whence $A_{1_1}$ fixes $1_3$, as required. Now assume that $h_2h_1^{-1}\neq h_1h_2$.
		Then $1_3$ and $(h_2h_1^{-1},3)$ are the only two vertices of valency $1$ in $[\Delta(1_1)]$ that has a neighbor of valency $3$ in $[\Delta(1_1)]$. Consequently, $A_{1_1}$ stabilizes $\{1_3,(h_2h_1^{-1},3)\}$.
		Since $1_2$ is the unique common neighbor to both of $1_3$ and $(h_2h_1^{-1},3)$ in $[\Delta(1_1)]$, it follows that $A_{1_1}$ fixes $1_2$, and so $A_{1_1}\leq A_{1_2}=A_{1_3}$.
		
		\smallskip
		\textsf{Step 3}: prove $A_{1_1}=1$.
		\smallskip
		
		Since $A_{1_2}=A_{1_3}$ and since $(h_2h_1^{-1},1)$ is the unique vertex in $[\Delta(1_3)]\cap G_1$ that is not adjacent to $1_2$ in $[\Delta(1_3)]$, we derive that $A_{1_3}$ fixes $(h_2h_1^{-1},1)$, that is, $A_{1_3}\leq A_{(h_2h_1^{-1},1)}$. Since $A\geq G$ is transitive on $G_1$, we have $|A_{(h_2h_1^{-1},1)}|=|A_{1_1}|$.
		
		Recall from Step 1 that $A$ stabilizes $G_1$, $G_2$ and $G_3$ respectively. Then for each $i\in\{1,2,3\}$, it follows from $G \leq  A$ that $A$ acts transitively on $G_i$, and so $|A_{1_i}|=|A|/|G|$.
		 This in conjunction with $A_{1_1}\leq A_{1_2}=A_{1_3}$ leads to $A_{1_1}=A_{1_2}=A_{1_3}$. Then by the transitivity of $A$ on $G_i$ for each $i\in\{1,2,3\}$, we conclude that $A_{(g,1)}=A_{(g,2)}=A_{(g,3)}$ for each $g\in G$.
		
		From $A_{1_1}=A_{1_3}\leq A_{(h_2h_1^{-1},1)}=A_{(h_2h_1^{-1},3)}$ we see that $A_{1_1}$ fixes $(h_2h_1^{-1},3)$. Then since $1_3$, $(h_1,3)$ and $(h_2h_1^{-1},3)$ are the only neighbors of $1_2$ in $[\Delta(1_1)]$, it follows that $A_{1_1}$ fixes $(h_1,3)$.
		Moreover, we deduce from $A_{1_3}\leq A_{(h_2h_1^{-1},1)}$ that
		\[
		A_{(h_1,3)}=A_{(1\cdot h_1,3)}\leq A_{(h_2h_1^{-1}\cdot h_1,1)}=A_{(h_2,1)}=A_{(h_2,2)}.
		\]
		Hence $A_{1_1}\leq A_{(h_1,3)}\leq A_{(h_2,2)}$. Now $A_{1_1}$ fixes $(h_2,2)$, while $(h_3,3)$ is the only neighbor of $(h_2,2)$ in $\Delta(1_1)$ other than $(h_1,3)$ and $(h_2h_1^{-1},3)$. It follows that $A_{1_1}\leq A_{(h_3,3)}=A_{(h_3,2)}$. This in turn implies that $A_{1_1}$ fixes $(h_4,3)$, the only neighbor of $(h_3,2)$ in $\Delta(1_1)$ other than $(h_3,3)$. Repeating this argument, we obtain that $A_{1_1}$ fixes $(h_i,3)$ for every $i\in\{4,\ldots,t\}$.
		
		Thus far we have shown that $A_{1_1}$ fixes $(x,3)$ for all $x\in L$. Consequently, $A_{(y,1)}\leq A_{(xy,3)}=A_{(xy,1)}$ for all $x\in L$ and $y\in G$. Since $G=\langle L\rangle$, it follows that $A_{1_1}\leq A_{(g,1)}$ for all $g\in G$. Then as $A_{(g,1)}=A_{(g,2)}=A_{(g,3)}$, we conclude that $A_{1_1}$ fixes every vertex of $\Delta$. Hence $A_{1_1}=1$, completing the proof.
	\end{proof}
	
	We are now in a position to prove Theorem~\ref{theo=main} in the case when $m$ is odd.
	
	\begin{proof}[Proof of Theorem~$\ref{theo=main}$ for odd $m$]
		From Propositions~\ref{prop=smallgroups} and~\ref{prop=smallranks} we see that Theorem~\ref{theo=main} holds for $d(G)\leq3$. Now assume that $d(G)\geq4$. Then as Lemma~\ref{lem=4.1} asserts, $G$ has a $3$-HGR $\Gamma_3$ and a $3$-PGSR $\Sigma_3$ satisfying the conditions of Lemma~\ref{Lem3PGSR}. The latter further implies that $G$ has an $m$-HGR for each odd $m\geq5$. Hence Theorem~\ref{theo=main} holds for every odd $m\geq3$.
	\end{proof}

	\section{Existence of $m$-HGRs for even $m$}\label{Sec6}
	
	With the same notation as in the proceeding section, let $G$ be a group with $d(G)=t\geq 4$, and let $\{h_1,h_2,\ldots,h_{t}\}$ be a generating set of $G$ such that $|h_1|\geq3$ whenever $G$ is not an elementary abelian $2$-group.
	%Assume without loss of generality that $h_t,h_{t-1},\ldots,h_{t-n+1}$ are the non-involutions among $h_1,h_2,\ldots,h_{t}$, where $0\leq n\leq t$.
	Take subsets $S$, $L$, $R$ of $G$ as in~\eqref{Eq-5.1}--\eqref{Eq-5.4}, and take
	\begin{align*}
		M&=\{1,h_i\mid1\leq i\leq t-1\},\\
		N&=\{h_1h_2h_3h_4,h_1h_3h_4,h_2h_3h_4,h_ih_2h_1^{-1}\mid3\leq i\leq t\}.
	\end{align*}
	Note that $|M|=t$ and $|N|=t+1$. Define graphs $\Gamma_4$ and $\Sigma_4$ as follows:
	\begin{itemize}
		\item $\Gamma_4=\Cay(G,(T_{i,j})_{4\times 4})$ with $T_{1,2}=T_{1,4}=T_{3,4}=S$, $T_{1,3}=L$, $T_{2,3}=R$ and $T_{2,4}=N$;
		\item  $\Sigma_4=\Cay(G,(T_{i,j})_{4\times 4})$ with $T_{1,2}=T_{1,4}=S$, $T_{1,3}=L$, $T_{2,3}=R$, $T_{2,4}=N$ and $T_{3,4}=M$.
	\end{itemize}
	
	\begin{lemma}\label{Lem6.3}
		The graph $\Gamma_4$ is a $4$-HGR of $G$, and $\Sigma_4$ is a $4$-PGSR of $G$ satisfying the conditions of Lemma~$\ref{Lem4PGSR}$.
	\end{lemma}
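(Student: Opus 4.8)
The plan is to follow the three–step template of Lemma~\ref{lem=4.1}, reducing both assertions to the trivial–stabilizer criterion of Lemma~\ref{Lem=stabilizertrivial}. First I would dispose of the routine facts. Because $\{h_1,\dots,h_t\}$ is a minimal generating set, its $t$ members are distinct and nontrivial, so $|S|=|L|=|R|=|N|=t+1$ and $|M|=t$; summing $|T_{i,j}|$ over $j$ then shows that every vertex of $\Gamma_4$ has valency $3t+3$, so $\Gamma_4$ is a $4$-Haar graph, whereas in $\Sigma_4$ the vertices $(g,1),(g,2)$ have valency $3t+3$ and $(g,3),(g,4)$ have valency $3t+2$. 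Thus $\Sigma_4$ meets conditions~(a) and~(b) of Lemma~\ref{Lem4PGSR} with $k=3t+2$ (and $2\le k\le|G|$ since $|G|\ge 2^t$), while condition~(c) holds because $1$ lies in each of $S,L,R,M$, so $\{1_1,1_2,1_3\}$ and $\{1_1,1_3,1_4\}$ are $3$-cycles whose $R_4(G)$-translates pass through every vertex. Finally $\langle T_{i,j}\rangle=G$ as $S$ already generates $G$. Writing $A$ for the automorphism group of $\Gamma_4$ (respectively $\Sigma_4$), it remains to prove that $A$ stabilizes each $G_i$ and that $A_{1_1}=A_{1_2}=A_{1_3}=A_{1_4}$ fixes the relevant neighborhood pointwise, whereupon Lemma~\ref{Lem=stabilizertrivial} forces $A=G$.

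The structural fact I would exploit is that the subgraph induced on $G_1\cup G_2\cup G_3$ is exactly the graph $\Gamma_3$ of Lemma~\ref{lem=4.1}, which is a $3$-HGR; hence as soon as $A$ fixes $G_4$ setwise, its restriction to $G_1\cup G_2\cup G_3$ lands in $\Aut(\Gamma_3)=G$ and therefore stabilizes each of $G_1,G_2,G_3$. To stabilize the parts I would compute the four neighborhoods, e.g. $\Gamma_4(1_1)=(S\times\{2\})\cup(L\times\{3\})\cup(S\times\{4\})$ and $\Gamma_4(1_2)=(S^{-1}\times\{1\})\cup(R\times\{3\})\cup(N\times\{4\})$, together with $\Gamma_4(1_3)$ and $\Gamma_4(1_4)$ (and their $\Sigma_4$ analogues), and analyze the induced subgraphs $[\Gamma_4(1_i)]$ exactly as the figures in Lemma~\ref{lem=4.1} treat the $3$-part case. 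The goal is to read off a graph invariant of each $[\Gamma_4(1_i)]$—from its internal valency sequence, the vertices of extremal internal valency, and their common neighbours—that separates the four values of $i$, so that $A$ fixes each $G_i$ setwise.

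Once the parts are fixed, the rigidity step is the same as in Lemma~\ref{lem=4.1}: each $[\Gamma_4(1_i)]$ contains a distinguished vertex of extremal internal valency whose neighbours are then forced, and tracing these forced fixed points along the edges labelled $S,L,R,N,M$ shows that $A_{1_i}$ fixes $\Gamma(1_i)$ pointwise and that the four stabilizers coincide. With the hypotheses of Lemma~\ref{Lem=stabilizertrivial} in hand, I conclude $\Aut(\Gamma_4)=\Aut(\Sigma_4)=G$, which simultaneously says that $\Gamma_4$ is a $4$-HGR and that $\Sigma_4$ is a $4$-PGSR; combined with the verified conditions~(a)--(c) this is precisely the assertion of the lemma.

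The main obstacle is the part-stabilization, and within it the danger of a near-symmetry interchanging $G_2$ and $G_4$ (and, since $\Gamma_4$ is regular, the companion interchange of $G_1$ and $G_3$): all four neighbor-sets have the common size $t+1$, so the distinction cannot be detected from sizes alone and must come from internal structure. This is exactly the purpose of the asymmetric set $N=T_{2,4}$: under a hypothetical swap $G_2\leftrightarrow G_4$ the block $T_{2,4}=N$ would have to match $T_{4,2}=N^{-1}$, and the three anchor elements $h_1h_2h_3h_4,\,h_1h_3h_4,\,h_2h_3h_4$ together with the chain $h_ih_2h_1^{-1}$ make the attachment of $N\times\{4\}$ to $R\times\{3\}$ and $S^{-1}\times\{1\}$ rigidly different from the picture seen at $1_4$. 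I expect the bulk of the work to be a lengthy but mechanical comparison of these neighborhood subgraphs, organized by the same case distinctions as in Lemma~\ref{lem=4.1}—whether $|h_1|=2$ (equivalently whether $G$ is elementary abelian), whether $h_2h_1^{-1}=h_1h_2$, and the orders of the successive quotients $h_ih_{i-1}^{-1}$.
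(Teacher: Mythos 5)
Your framework — verify the valency/3-cycle conditions, separate the four parts by invariants of the induced neighbourhood subgraphs $[\Delta(1_i)]$, then kill the point stabilizer — is the same as the paper's, and your routine computations ($|S|=|L|=|R|=|N|=t+1$, $|M|=t$, $k=3t+2$, the $3$-cycles through $1_1,1_2,1_3$ and $1_1,1_3,1_4$) are correct. Your observation that $[G_1\cup G_2\cup G_3]\cong\Gamma_3$ is a $3$-HGR is also exactly the lever the paper pulls, though it uses it in the final rigidity step (to get that $A_{1_1}$ fixes $G_1\cup G_2\cup G_3$ pointwise, after which a short computation with $S^{-1}g=S^{-1}h$ forces $A_{1_1}$ to fix $G_4$ pointwise as well — a cleaner route than re-running the Lemma~\ref{Lem=stabilizertrivial} machinery).

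The genuine gap is that the part-separation, which is the entire substance of the lemma, is announced rather than carried out. You say the goal is "to read off a graph invariant of each $[\Gamma_4(1_i)]$ that separates the four values of $i$" and that you "expect" a mechanical comparison to succeed, but you never exhibit the invariants. The paper's actual choices are specific and not obvious: $G_3$ is recognized because $1_2$ and $1_4$ have at least $t$ common neighbours inside $[\Delta(1_3)]$ while no $[\Delta(1_i)]$ with $i\in\{1,2,4\}$ contains such a pair; $G_1$ is then recognized by $1_3$ having valency at least $t+1$ in $[\Delta(1_1)]$; and ruling out a $G_2\leftrightarrow G_4$ swap takes a page-long contradiction argument that first forces $t=4$ and then analyses which vertices of $G_3$ have two neighbours in $N^{-1}\times\{2\}$. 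Moreover, the first of these invariants genuinely fails for $G=C_2^4$: the paper must exclude that group from the generic argument and certify it separately by computer. Your proposal has no provision for this exceptional case, so the "mechanical comparison" you defer would, as stated, break there. Until the separating invariants are written down and the $C_2^4$ case is handled, the proof is incomplete at its central step.
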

	
	\begin{proof}
		It is straightforward to verify that $\Gamma_4$ is $(3t+3)$-regular and that conditions~(a)--(c) of Lemma~\ref{Lem4PGSR} are satisfied for $\Sigma_4$. If $G=C_2^4$, then computation in \textsc{Magma}~\cite{magma} confirms that $\Gamma_4$ is a $4$-HGR and $\Sigma_4$
		is a $4$-PGSR. For the rest of the proof, assume that $G\neq C_2^4$. Let $\Delta=\Gamma_4$ or $\Sigma_4$, and let $A=\Aut(\Delta)$. Then
		\begin{align*}
			\Delta(1_1)&=(S\times \{2\})\cup (L\times \{3\})\cup (S\times \{4\}),\\
			\Delta(1_2)&=(S^{-1}\times\{1\})\cup (R \times\{3\})\cup (N\times \{4\}),\\
			\Delta(1_3)&=\begin{cases}
				(L^{-1}\times \{1\})\cup  (R^{-1}\times \{2\}) \cup (S\times \{4\})&\text{if }\Delta=\Gamma_4,\\
				(L^{-1}\times \{1\})\cup  (R^{-1}\times \{2\})\cup (M\times \{4\})&\text{if }\Delta=\Sigma_4,\\
			\end{cases}\\
			\Delta(1_4)&=\begin{cases}
				(S^{-1}\times \{1\})\cup  (N^{-1}\times \{2\}) \cup (S^{-1}\times \{3\})&\text{if }\Delta=\Gamma_4,\\
				(S^{-1}\times \{1\})\cup  (N^{-1}\times \{2\})\cup (M^{-1}\times \{3\})&\text{if }\Delta=\Sigma_4.
			\end{cases}
		\end{align*}
		The key to the proof is analyzing the induced subgraph $[\Delta(1_i)]$ for $i\in\{1,2,3,4\}$.
		%In particular, $[\Delta(1_i)]$ is a 3-partite graph for $i\in\{1,2,3,4\}$.
		
		First consider $[\Delta(1_1)]$. Note that the induced subgraph $[G_1\cup G_2\cup G_3]$ is the graph $\Gamma_3$ defined in Section~\ref{Sec5}. In particular, the subgraph induced by $(S\times \{2\})\cup (L\times \{3\})$ in $[\Delta(1_1)]$ is the same as
		$[\Gamma_3(1_1)]$ (see the first graph in Figure~\ref{Fig4} for $t\geq 5$). Since $T_{2,4}=N$, there is no edge between $S\times \{2\}$ and $S\times \{4\}$ in $[\Delta(1_1)]$. For the edges between $L\times\{3\}$ and $S\times \{4\}$ in $[\Delta(1_1)]$, since $T_{3,4}=S$ when $\Delta=\Gamma_4$ and $T_{3,4}=M$ when $\Delta=\Sigma_4$, we have
		\begin{align*}
			\Delta(1_3)\cap (S\times \{4\}) &=
			\begin{cases}
				\{1,h_i\mid1\leq i\leq t \}\times \{4\}  &\text{if } \Delta=\Gamma_4,\\
				\{1,h_i\mid1\leq i\leq t-1 \}\times \{4\}  &\text{if } \Delta=\Sigma_4,\\
			\end{cases}\\
			\Delta((h_i,3))\cap (S\times \{4\}) &=
			\begin{cases}
				\{1,h_i\}\times \{4\} &\text{if }|h_i|=2,\\
				\{(h_i,4)\} &\text{if }|h_i|>2,
			\end{cases}
			\ \ (i=1\text{ or }3\leq i\leq t-1),\\
			\Delta((h_2h_1^{-1},3))\cap (S\times \{4\}) &\subseteq
			\{h_1,h_2\}\times \{4\},\\
			\Delta((h_t,3))\cap (S\times \{4\}) &=
			\begin{cases}
				\{1,h_t\}\times \{4\} &\text{if }|h_t|=2\text{ and }\Delta=\Gamma_4 ,\\
				\{(h_t,4)\} &\text{if }|h_t|>2\text{ or }\Delta=\Sigma_4.\\
			\end{cases}
		\end{align*}
		%are $\{1_3,1_4\}$, $\{1_3,(h_i,4)\}$ with $i\in \{1,\ldots,t\}$, $\{(h_1,3),1_4\}$ (if $|h_1|=2$), $\{(h_2h_1^{-1},3),(h_2,4)\}$
		%(if $h_1h_2=h_2h_1$), and $\{(h_2h_1^{-1},3),(h_2,4)\}$ (if $|h_1|=|h_2|=2$). Thus, $1_3$ has valency $t+1$ (if $|h_1|\neq2$)
		%and $t+2$ (if $|h_1|=2$), and other vertex of has valency at most 4  in $[\Delta(1_2)]$.
		
		Consider $[\Delta(1_2)]$ in the same vein as above. The subgraph induced by $(S^{-1}\times \{1\})\cup (R\times \{3\})$ in $[\Delta(1_2)]$ is the same as $[\Gamma_3(1_2)]$  (see the second graph in Figure~\ref{Fig4} for $t\geq 5$).
		Since $T_{1,4}=S$, there is no edge between $S^{-1}\times \{1\}$ and $N\times \{4\}$ in $[\Delta(1_2)]$. For the edges between $R\times\{3\}$ and $N\times \{4\}$ in $[\Delta(1_2)]$, we have
		\begin{align*}
			\Delta(1_3)\cap (N\times\{4\})&=\Delta((h_1,3))\cap(N\times\{4\})=\emptyset,\\
			\Delta((h_2h_1^{-1},3))\cap (N\times \{4\}) &=
			\begin{cases}
				\{h_ih_2h_1^{-1}\mid3\leq i\leq t\}\times \{4\}&\text{if }\Delta=\Gamma_4,\\
				\{h_ih_2h_1^{-1}\mid3\leq i\leq t-1\}\times \{4\}&\text{if }\Delta=\Sigma_4,\\
			\end{cases}\\
			\Delta((h_3h_2^{-1},3))\cap (N\times \{4\}) &\subseteq
			\{h_3h_2h_1^{-1},h_2h_3h_4\}\times \{4\},\\
			\Delta((h_4h_3^{-1},3))\cap (N\times \{4\}) &\subseteq
			\{h_1h_3h_4,h_2h_3h_4\}\times \{4\},\\
			\Delta((h_ih_{i-1}^{-1},3))\cap(N\times\{4\})&=\emptyset,\ \ (5\leq i\leq t).
		\end{align*}
		%\Delta((h_3h_2^{-1},3))\cap (N\times \{4\}) &=
		%\begin{cases}
		%\{(h_3h_2h_1^{-1},4),(h_2h_3h_4,4)\}\times \{4\} &\text{if } h_3h_2h_1^{-1}=h_1h_3h_2^{-1}, h_2h_3h_4=h_4h_3h_2^{-1} \text{ and }t>4 ,\\
		%\{(h_3h_2h_1^{-1},4)\}\times \{4\} &\text{if } h_3h_2h_1^{-1}= h_1h_3h_2^{-1} \text{ and }h_2h_3h_4\neq h_4h_3h_2^{-1} ,\\
		%\{(h_2h_3h_4,4)\}\times \{4\} &\text{if } h_3h_2h_1^{-1}\neq h_1h_3h_2^{-1} \text{ and }h_2h_3h_4=h_4h_3h_2^{-1},\\
		%\emptyset &\text{if } h_3h_2h_1^{-1}\neq h_1h_3h_2^{-1} \text{ and }h_2h_3h_4\neq h_4h_3h_2^{-1},\\
		%\end{cases}
		
		Next, let us consider $[\Delta(1_4)]$. Since $T_{1,2}=S$, the graph $[\Delta(1_4)]$ has no edge between $S^{-1}\times \{1\}$ and $N^{-1}\times \{2\}$. For its edges between $T_{3,4}^{-1}\times\{3\}$ and $S^{-1}\times \{1\}$ and edges between $T_{3,4}^{-1}\times\{3\}$ and $N^{-1}\times \{2\}$, we have
		\begin{align*}
			\Delta(1_3)\cap (S^{-1}\times \{1\}) &=\{1,h_1^{-1}, h_i^{-1}\mid3\leq i\leq t\},\\
			\Delta((h_1^{-1},3))\cap (S^{-1}\times \{1\}) &\subseteq
			\{1,h_1^{-1},h_2^{-1}\}\times \{1\},\\
			\Delta((h_2^{-1},3))\cap (S^{-1}\times \{1\}) &\subseteq
			\{h_1^{-1},h_2^{-1}\}\times \{1\},\\
			\Delta((h_i^{-1},3))\cap (S^{-1}\times \{1\}) &\subseteq
			\{1,h_i^{-1}\}\times \{1\}\ \ (3\leq i\leq t-1),\\
			\Gamma_4((h_t^{-1},3))\cap (S^{-1}\times \{1\}) &=
			\begin{cases}
				\{1,h_t^{-1}\}\times \{1\}&\text{if } |h_t|=2,\\
				\{(h_t^{-1},1)\}&\text{if } |h_t|>2,
			\end{cases}\\
			\Delta(1_3)\cap (N^{-1}\times \{2\}) &=\emptyset,\\
			\Delta((h_1^{-1},3))\cap (N^{-1}\times \{2\}) &\subseteq
			\{h_4^{-1}h_3^{-1}h_1^{-1},h_1h_2^{-1}h_3^{-1}\}\times \{2\},\\
			\Delta((h_2^{-1},3))\cap (N^{-1}\times \{2\}) &\subseteq
			\{h_4^{-1}h_3^{-1}h_2^{-1}\}\times \{2\},\\
			\Delta((h_i^{-1},3))\cap (N^{-1}\times \{2\}) &=
			\{h_1h_2^{-1}h_i^{-1}\}\times \{2\},\ \ (i=3\text{ or }5\leq i\leq t-1),\\
			\Delta((h_4^{-1},3))\cap (N^{-1}\times \{2\}) &\subseteq
			\{h_4^{-1}h_3^{-1}h_2^{-1}, h_1h_2^{-1}h_4^{-1}\}\times \{2\}\ \text{ if } t\geq 5 \text{ or } \Delta=\Gamma_4,\\
			\Gamma_4((h_t^{-1},3))\cap (N^{-1}\times \{2\}) &=
			\{h_1h_2^{-1}h_t^{-1}\}\times \{2\}.
		\end{align*}
		
		For $i\in \{1,2,4\}$, the structure of $[\Gamma_4(1_i)]$ is depicted in Figure~\ref{Fig61} or Figure~\ref{Fig62} (the depicted are for $t\geq5$, but those for $t=4$ are similar), where the dashed are possible edges.
		Then one can obtain $[\Sigma_4(1_1)]$ from $[\Gamma_4(1_1)]$ by deleting the edges $\{1_3,(h_t,4)\}$ and $\{(h_t,3),1_4\}$.
		Similarly, $[\Sigma_4(1_2)]$ can be obtained from $[\Gamma_4(1_2)]$ by deleting the edge $\{(h_2h_1^{-1},3),(h_th_2h_1^{-1},4)\}$,
		and $[\Sigma_4(1_4)]$ can be obtained from $[\Gamma_4(1_4)]$ by deleting the vertex $\{(h_t^{-1},3)\}$ together with incident edges.
		We prove the lemma along the two steps as follows.
		
		\begin{figure}[!ht]
			\vspace{3mm}
			\caption{The induced subgraphs $[\Gamma_4(1_1)]$ and $[\Gamma_4(1_2)]$ }\label{Fig61}
			\begin{tikzpicture}[node distance=0.6cm,thick,scale=0.6,every node/.style={transform shape},scale=1]
				\node[circle](A0){};
				\node[right=of A0](AA){};
				\node[below=of AA, circle,draw, inner sep=1pt, label=left:{\Large$1_2$}](A1){};
				\node[below=of A1](A11){};
				\node[below=of A11, circle,draw, inner sep=1pt, label=left:{\Large$(h_1,2)$}](A2){};
				\node[below=of A2](A21){};
				\node[below=of A21, circle,draw, inner sep=1pt, label=left:{\Large$(h_2,2)$}](A3){};
				\node[below=of A3](A31){};
				\node[below=of A31, circle,draw, inner sep=1pt, label=left:{\Large$(h_3,2)$}](A4){};
				\node[below=of A4, circle,draw, inner sep=1pt, label=left:{}](A5){};
				\node[below=of A5, circle,draw, inner sep=1pt, label=left:{}](A6){};
				\node[below=of A6, circle,draw, inner sep=1pt, label=left:{}](A7){};
				\node[below=of A7, circle,draw, inner sep=1pt, label=left:{\Large$(h_{t-1},2)$}](A8){};
				\node[below=of A8](A81){};
				\node[below=of A81, circle,draw, inner sep=1pt, label=left:{\Large$(h_{t},2)$}](A9){};
				\node[right=of A1](A00){};
				\node[right=of A00](A000){};
				\node[right=of A000](A0000){};
				\node[right=of A0000, circle,draw, inner sep=1pt, label=above:{\Large$1_3$}](B1){};
				\node[below=of B1](B11){};
				\node[below=of B11, circle,draw, inner sep=1pt, label=below:{\Large$(h_1,3)$}](B2){};
				\node[below=of B2](B21){};
				\node[below=of B21, circle,draw, inner sep=1pt, label=below:{\Large$(h_2h_1^{-1},3)$}](B3){};
				\node[below=of B3](B31){};
				\node[below=of B31, circle,draw, inner sep=1pt, label=below:{\Large$(h_3,3)$}](B4){};
				\node[below=of B4, circle,draw, inner sep=1pt, label=below:{}](B5){};
				\node[below=of B5, circle,draw, inner sep=1pt, label=below:{}](B6){};
				\node[below=of B6, circle,draw, inner sep=1pt, label=below:{}](B7){};
				\node[below=of B7, circle,draw, inner sep=1pt, label=below:{\Large$(h_{t-1},3)$}](B8){};
				\node[below=of B8](B81){};
				\node[below=of B81, circle,draw, inner sep=1pt, label=below:{\Large$(h_{t},3)$}](B9){};
				\node[below=of B9](B91){};
				\node[below=of B91,label=below:{\LARGE$[\Gamma_4(1_1)]$}]{};
				\draw[] (A1) to (B1);
				\draw[] (A1) to (B2);
				\draw[] (A1) to (B3);
				\draw[] (A2) to (B2);
				\draw[dashed] (A2) to (B1);
				\draw[dashed] (A3) to (B2);
				\draw[dashed] (A3) to (B3);
				\draw[dashed] (A5) to (B4);
				\draw[] (A3) to (B4);
				\draw[] (A4) to (B4);
				\draw[] (A4) to (B5);
				\draw[] (A7) to (B8);
				\draw[] (A8) to (B8);
				\draw[] (A8) to (B9);
				\draw[dashed] (A8) to (B7);
				\draw[] (A9) to (B9);
				\draw[dashed] (A9) to (B8);
				\node[right=of B1](B11){};
				\node[right=of B11](B111){};
				\node[right=of B111](B1111){};
				\node[right=of B1111](B11111){};
				\node[right=of B11111](B111111){};
				\node[right=of B1111, circle,draw, inner sep=1pt, label=right:{\Large$1_4$}](C1){};
				\node[below=of C1](C11){};
				\node[below=of C11, circle,draw, inner sep=1pt, label=right:{\Large$(h_1,4)$}](C2){};
				\node[below=of C2](C21){};
				\node[below=of C21, circle,draw, inner sep=1pt, label=right:{\Large$(h_2,4)$}](C3){};
				\node[below=of C3](C31){};
				\node[below=of C31, circle,draw, inner sep=1pt, label=right:{\Large$(h_3,4)$}](C4){};
				\node[below=of C4, circle,draw, inner sep=1pt, label=right:{}](C5){};
				\node[below=of C5, circle,draw, inner sep=1pt, label=right:{}](C6){};
				\node[below=of C6, circle,draw, inner sep=1pt, label=right:{}](C7){};
				\node[below=of C7, circle,draw, inner sep=1pt, label=right:{\Large$(h_{t-1},4)$}](C8){};
				\node[below=of C8](C81){};
				\node[below=of C81, circle,draw, inner sep=1pt, label=right:{\Large$(h_{t},4)$}](C9){};
				\draw[] (B1) to (C1);
				\draw[] (B1) to (C2);
				\draw[] (B1) to (C3);
				\draw[] (B1) to (C4);
				\draw[] (B1) to (C5);
				\draw[] (B1) to (C6);
				\draw[] (B1) to (C8);
				\draw[] (B1) to (C9);
				\draw[] (B2) to (C2);
				\draw[dashed] (B2) to (C1);
				\draw[dashed] (B4) to (C1);
				\draw[dashed] (B5) to (C1);
				\draw[dashed] (B7) to (C1);
				\draw[dashed] (B8) to (C1);
				\draw[dashed] (B9) to (C1);
				\draw[dashed] (B3) to (C2);
				\draw[dashed] (B3) to (C3);
				\draw[] (B4) to (C4);
				\draw[] (B8) to (C8);
				\draw[] (B9) to (C9);
				\node[right=of C1](C11){};
				\node[right=of C11](C111){};
				\node[right=of C111](C1111){};
				\node[right=of C1111](C11111){};
				\node[right=of C11111](C111111){};
				\node[right=of C111111, circle,draw, inner sep=1pt, label=left:{\Large$1_1$}](1C1){};
				\node[below=of 1C1](1C11){};
				\node[below=of 1C11, circle,draw, inner sep=1pt, label=left:{\Large$(h_1^{-1},1)$}](1C2){};
				\node[below=of 1C2](C21){};
				\node[below=of C21, circle,draw, inner sep=1pt, label=left:{\Large$(h_2^{-1},1)$}](1C3){};
				\node[below=of 1C3](C31){};
				\node[below=of C31, circle,draw, inner sep=1pt, label=left:{\Large$(h_3^{-1},1)$}](1C4){};
				\node[below=of 1C4, circle,draw, inner sep=1pt, label=left:{}](1C5){};
				\node[below=of 1C5, circle,draw, inner sep=1pt, label=left:{}](1C6){};
				\node[below=of 1C6, circle,draw, inner sep=1pt, label=left:{}](1C7){};
				\node[below=of 1C7, circle,draw, inner sep=1pt, label=left:{\Large$(h_{t-1}^{-1},1)$}](1C8){};
				\node[below=of 1C8](C81){};
				\node[below=of C81, circle,draw, inner sep=1pt, label=left:{\Large$(h_{t}^{-1},1)$}](1C9){};
				\node[right=of 1C1](C00){};
				\node[right=of C00](C000){};
				\node[right=of C000](C0000){};
				\node[right=of C0000, circle,draw, inner sep=1pt, label=right:{\Large$1_3$}](D1){};
				\node[below=of D1](D11){};
				\node[below=of D11, circle,draw, inner sep=1pt, label=right:{\Large$(h_1,3)$}](D2){};
				\node[below=of D2](D21){};
				\node[below=of D21, circle,draw, inner sep=1pt, label=right:{\Large$(h_2h_1^{-1},3)$}](D3){};
				\node[below=of D3](D31){};
				\node[below=of D31, circle,draw, inner sep=1pt, label=above:{\Large$(h_3h_2^{-1},3)$}](D4){};
				\node[below=of D4, circle,draw, inner sep=1pt, label=above:{}](D5){};
				\node[below=of D5, circle,draw, inner sep=1pt, label=above:{}](D6){};
				\node[below=of D6, circle,draw, inner sep=1pt, label=above:{}](D7){};
				\node[below=of D7, circle,draw, inner sep=1pt, label=right:{\Large$(h_{t-1}h_{t-2}^{-1},3)$}](D8){};
				\node[below=of D8](D81){};
				\node[below=of D81, circle,draw, inner sep=1pt, label=right:{\Large$(h_{t}h_{t-1}^{-1},3)$}](D9){};
				\node[below=of D9](D91){};
				\node[below=of D91,label=below:{\LARGE$[\Gamma_4(1_2)]$}]{};
				\draw[] (1C1) to (D1);
				\draw[] (1C1) to (D2);
				\draw[] (1C1) to (D3);
				\draw[] (1C2) to (D1);
				\draw[dashed](1C2) to (D2);
				\draw[dashed](1C3) to (D2);
				\draw[dashed] (1C3) to (D3);
				\draw[] (1C3) to (D4);
				\draw[] (1C4) to (D1);
				\draw (1C4) to (D5);
				\draw (1C5) to (D1);
				\draw (1C7) to (D1);
				\draw (1C7) to (D8);
				\draw[] (1C8) to (D1);
				\draw[dashed] (1C8) to (D8);
				\draw[] (1C8) to (D9);
				\draw[] (1C9) to (D1);
				\draw[dashed] (1C9) to (D9);
				\node[right=of D1](D11){};
				\node[right=of D11](D111){};
				\node[right=of D111](D1111){};
				\node[right=of D111](D1111){};
				\node[right=of D1111](D11111){};
				\node[right=of D11111](D111111){};
				\node[right=of D111111](D1111111){};
				\node[right=of D11111, circle,draw, inner sep=1pt, label=right:{\Large$(h_1h_2h_3h_4,4)$}](E1){};
				\node[below=of E1](E11){};
				\node[below=of E11, circle,draw, inner sep=1pt, label=right:{\Large$(h_1h_3h_4,4)$}](E2){};
				\node[below=of E2](E21){};
				\node[below=of E21, circle,draw, inner sep=1pt, label=right:{\Large$(h_2h_3h_4,4)$}](E3){};
				\node[below=of E3](E31){};
				\node[below=of E31, circle,draw, inner sep=1pt, label=right:{\Large$(h_3h_2h_1^{-1},4)$}](E4){};
				\node[below=of E4, circle,draw, inner sep=1pt, label=right:{}](E5){};
				\node[below=of E5, circle,draw, inner sep=1pt, label=right:{}](E6){};
				\node[below=of E6, circle,draw, inner sep=1pt, label=right:{}](E7){};
				\node[below=of E7, circle,draw, inner sep=1pt, label=right:{\Large$(h_{t-1}h_2h_1^{-1},4)$}](E8){};
				\node[below=of E8](E81){};
				\node[below=of E81, circle,draw, inner sep=1pt, label=right:{\Large$(h_{t}h_2h_1^{-1},4)$}](E9){};
				\draw[] (D3) to (E4);
				\draw[] (D3) to (E5);
				\draw[] (D3) to (E6);
				\draw[] (D3) to (E8);
				\draw[] (D3) to (E9);
				\draw[dashed](D4) to (E3);
				\draw[dashed](D4) to (E4);
				\draw[dashed] (D5) to (E2);
				\draw[dashed] (D5) to (E3);
			\end{tikzpicture}
			\vspace{3mm}
		\end{figure}
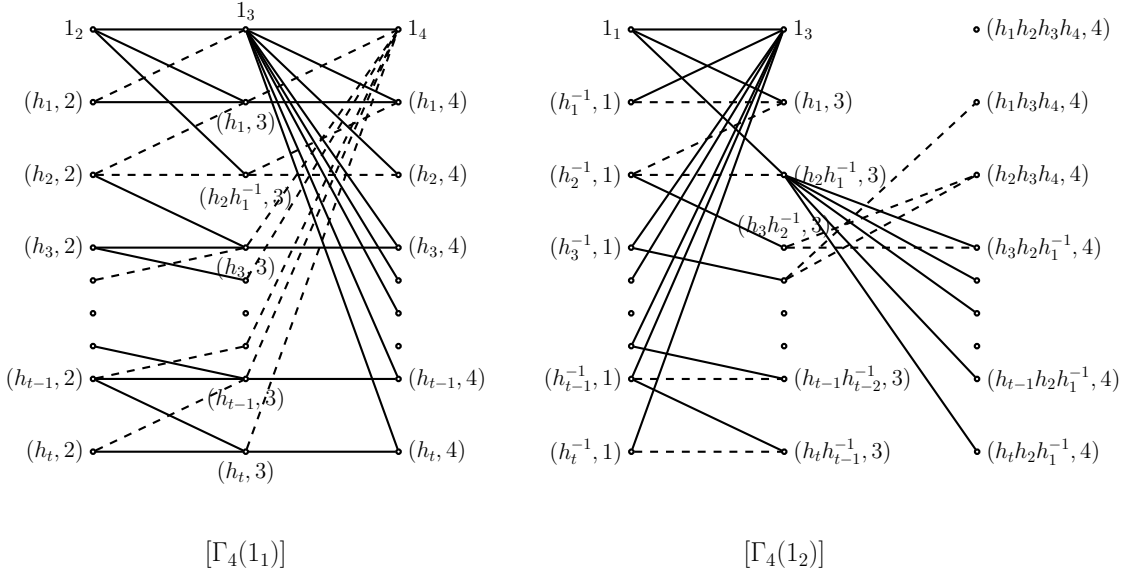
		
		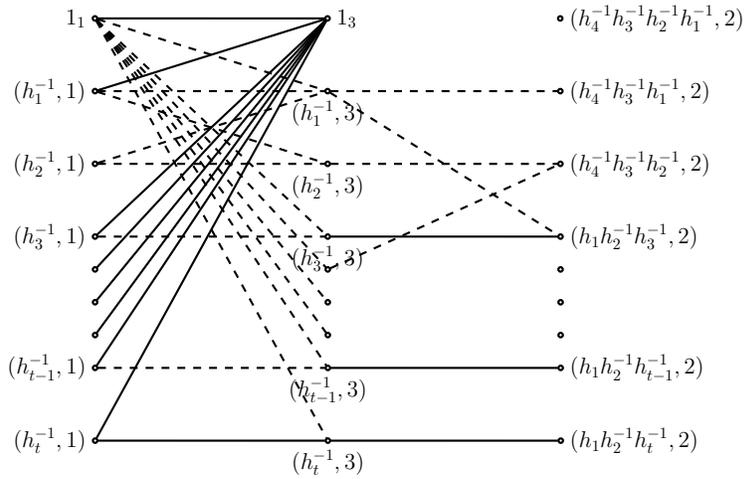
\begin{figure}[!ht]
			\vspace{3mm}
			\caption{The induced subgraphs $[\Gamma_4(1_4)]$}\label{Fig62}
			\begin{tikzpicture}[node distance=0.6cm,thick,scale=0.6,every node/.style={transform shape},scale=1]
				\node[circle](A0){};
				\node[right=of A0](AA){};
				\node[right=of AA](AAA){};
				\node[right=of AAA](AAAA){};
				\node[right=of AAAA](AAAAA){};
				\node[right=of AAAAA](AAAAAA){};
				\node[right=of AAAAAA](AAAAAAA){};
				\node[right=of AAAAAAA](AAAAAAA1){};
				\node[right=of AAAAAAA1](AAAAAAA2){};
				\node[right=of AAAAAAA2](AAAAAAA3){};
				\node[below=of AAAAAAA1, circle,draw, inner sep=1pt, label=left:{\Large$1_1$}](A1){};
				\node[below=of A1](A11){};
				\node[below=of A11, circle,draw, inner sep=1pt, label=left:{\Large$(h_1^{-1},1)$}](A2){};
				\node[below=of A2](A21){};
				\node[below=of A21, circle,draw, inner sep=1pt, label=left:{\Large$(h_2^{-1},1)$}](A3){};
				\node[below=of A3](A31){};
				\node[below=of A31, circle,draw, inner sep=1pt, label=left:{\Large$(h_3^{-1},1)$}](A4){};
				\node[below=of A4, circle,draw, inner sep=1pt, label=left:{}](A5){};
				\node[below=of A5, circle,draw, inner sep=1pt, label=left:{}](A6){};
				\node[below=of A6, circle,draw, inner sep=1pt, label=left:{}](A7){};
				\node[below=of A7, circle,draw, inner sep=1pt, label=left:{\Large$(h_{t-1}^{-1},1)$}](A8){};
				\node[below=of A8](A81){};
				\node[below=of A81, circle,draw, inner sep=1pt, label=left:{\Large$(h_{t}^{-1},1)$}](A9){};
				\node[right=of A1](A00){};
				\node[right=of A00](A000){};
				\node[right=of A000](A0000){};
				\node[right=of A0000](A00000){};
				\node[right=of A00000](A000000){};
				\node[right=of A000000, circle,draw, inner sep=1pt, label=right:{\Large$1_3$}](B1){};
				\node[below=of B1](B11){};
				\node[below=of B11, circle,draw, inner sep=1pt, label=below:{\Large$(h_1^{-1},3)$}](B2){};
				\node[below=of B2](B21){};
				\node[below=of B21, circle,draw, inner sep=1pt, label=below:{\Large$(h_2^{-1},3)$}](B3){};
				\node[below=of B3](B31){};
				\node[below=of B31, circle,draw, inner sep=1pt, label=below:{\Large$(h_3^{-1},3)$}](B4){};
				\node[below=of B4, circle,draw, inner sep=1pt, label=below:{}](B5){};
				\node[below=of B5, circle,draw, inner sep=1pt, label=below:{}](B6){};
				\node[below=of B6, circle,draw, inner sep=1pt, label=below:{}](B7){};
				\node[below=of B7, circle,draw, inner sep=1pt, label=below:{\Large$(h_{t-1}^{-1},3)$}](B8){};
				\node[below=of B8](B81){};
				\node[below=of B81, circle,draw, inner sep=1pt, label=below:{\Large$(h_{t}^{-1},3)$}](B9){};
				\draw[] (A1) to (B1);
				\draw[dashed] (A1) to (B2);
				\draw[dashed] (A1) to (B4);
				\draw[dashed] (A1) to (B5);
				\draw[dashed] (A1) to (B6);
				\draw[dashed] (A1) to (B7);
				\draw[dashed] (A1) to (B8);
				\draw[dashed] (A1) to (B9);
				\draw[] (A2) to (B1);
				\draw[dashed] (A2) to (B2);
				\draw[dashed] (A2) to (B3);
				\draw[dashed] (A3) to (B2);
				\draw[dashed] (A3) to (B3);
				\draw[] (A4) to (B1);
				\draw[] (A5) to (B1);
				\draw[] (A6) to (B1);
				\draw[] (A7) to (B1);
				\draw[dashed] (A4) to (B4);
				\draw[] (A8) to (B1);
				\draw[dashed] (A8) to (B8);
				\draw[] (A9) to (B9);
				\draw[] (A9) to (B1);
				\node[right=of B1](B11){};
				\node[right=of B11](B111){};
				\node[right=of B111](B1111){};
				\node[right=of B1111](B11111){};
				\node[right=of B11111](B111111){};
				\node[right=of B111111, circle,draw, inner sep=1pt, label=right:{\Large$(h_4^{-1}h_3^{-1}h_2^{-1}h_1^{-1},2)$}](C1){};
				\node[below=of C1](C11){};
				\node[below=of C11, circle,draw, inner sep=1pt, label=right:{\Large$(h_4^{-1}h_3^{-1}h_1^{-1},2)$}](C2){};
				\node[below=of C2](C21){};
				\node[below=of C21, circle,draw, inner sep=1pt, label=right:{\Large$(h_4^{-1}h_3^{-1}h_2^{-1},2)$}](C3){};
				\node[below=of C3](C31){};
				\node[below=of C31, circle,draw, inner sep=1pt, label=right:{\Large$(h_1h_2^{-1}h_3^{-1},2)$}](C4){};
				\node[below=of C4, circle,draw, inner sep=1pt, label=right:{}](C5){};
				\node[below=of C5, circle,draw, inner sep=1pt, label=right:{}](C6){};
				\node[below=of C6, circle,draw, inner sep=1pt, label=right:{}](C7){};
				\node[below=of C7, circle,draw, inner sep=1pt, label=right:{\Large$(h_1h_2^{-1}h_{t-1}^{-1},2)$}](C8){};
				\node[below=of C8](C81){};
				\node[below=of C81, circle,draw, inner sep=1pt, label=right:{\Large$(h_1h_2^{-1}h_t^{-1},2)$}](C9){};
				\draw[dashed] (B2) to (C2);
				\draw[dashed] (B2) to (C4);
				\draw[] (B4) to (C4);
				\draw[dashed] (B3) to (C3);
				\draw[dashed] (B5) to (C3);
				\draw[] (B8) to (C8);
				\draw[] (B9) to (C9);
				\node[below=of B9](B90){};
				%\node[below=of B90,label=below:{\LARGE$[\Gamma_4(1_4)]$}]{};
			\end{tikzpicture}
			\vspace{3mm}
		\end{figure}
		
		\smallskip
		\textsf{Step 1}: prove that $A$ stabilizes $G_i$ for each $i\in\{1,2,3,4\}$.
		\smallskip
		
		Consider $[\Delta(1_3)]$.
		%Recall that $\Delta(1_3)=(L^{-1}\times \{1\})\cup  (R^{-1}\times \{2\}) \cup (S\times \{4\})$ when $\Delta=\Gamma_4$ and $\Delta(1_3)=(L^{-1}\times \{1\})\cup  (R^{-1}\times \{2\}) \cup (M\times \{4\})$ when $\Delta=\Sigma_4$.
		Since $T_{1,2}=T_{1,4}=S$, it follows that $\{1_1,1_2\}$, $\{1_1,1_4\}$, $\{(h_i^{-1},1),1_2\}$ with $i\in \{1,3,\ldots,t\}$ and $\{(h_i^{-1},1),1_4\}$ with $i\in \{1,3,\ldots,t\}$ are edges in $[\Delta(1_3)]$. In particular, $1_2$ and $1_4$ have at least $t$ common neighbors $1_1$ and $(h_i^{-1},1)$ with $i\in\{1,3\ldots,t\}$ in $[\Delta(1_3)]$. However, as $G\neq C_2^4$, we observe for any $i\in \{1,2,4\}$ that the graph $[\Delta(1_i)]$ has no two vertices with at least $t$ common neighbors in $[\Delta(1_i)]$. Thus $A$ stabilizes $G_3$.
		Now the vertex $1_3$ has valency at least $t+1$ in $[\Delta(1_1)]$, while for $i\in\{2,4\}$, no vertex from $G_3$ has valency at least $t+1$ in $[\Delta(1_i)]$ (again as $G\neq C_2^4$). Hence $A$ stabilizes $G_1$.
		
		To finish Step~1, it remains to prove that $A$ stabilizes each of $G_2$ and $G_4$. Suppose for a contradiction that $(1_2)^{\a}=1_4$ for some $\a\in A$. Since $1_2$ and $1_4$ have valencies $3t+3$ and $3t+2$, respectively, in $\Sigma_4$, it follows that $\Delta=\Gamma_4$.
		In the graph $[\Delta(1_2)]$, the vertex $(h_2h_1^{-1},3)$ has $t-2$ neighbors in $G_4$. Then since $[\Delta(1_2)]^{\a}=[\Delta(1_4)]$ and since $\a$ stabilizes $G_1$ and $G_3$ respectively, the graph $[\Delta(1_4)]$ must have some vertex in $G_3$ with at least $t-2$ neighbors in $G_2$. This forces $t=4$ and that either $(h_1^{-1},3)$ or $(h_4^{-1},3)$ is a vertex of $[\Delta(1_4)]$ with two neighbors in $G_2$.
		
		First assume that $(h_1^{-1},3)$ is a vertex of $[\Delta(1_4)]$ with two neighbors in $G_2$. Since
		\[
		\Delta((h_1^{-1},3))\cap\Delta(1_4)\cap G_2
		=\Delta((h_1^{-1},3))\cap(N^{-1}\times{2})\subseteq\{h_4^{-1}h_3^{-1}h_1^{-1},h_1h_2^{-1}h_3^{-1}\}\times\{2\},
		\]
		it follows that $(h_4^{-1}h_3^{-1}h_1^{-1},2)\in\Delta((h_1^{-1},3))$, which is equivalent to $h_4h_3^{-1}=h_3h_4$.
		In this case, $\{(h_2^{-1},3),((h_4^{-1}h_3^{-1}h_2^{-1},2)\}$ is also an edge in $[\Delta(1_4)]$, and so $1_3$ is the only vertex of $[\Delta(1_4)]$ in $G_3$ with no neighbor in $G_2$.
		However, there are two vertices $1_3$ and $(h_1,3)$ of $[\Delta(1_2)]$ in $G_3$ with no neighbor in $G_4$, a contradiction.
		
		Now we conclude that $(h_4^{-1},3)$ is the only vertex of $[\Delta(1_4)]$ with two neighbors in $G_2$. As a consequence, $(h_2h_1^{-1},3)^{\a}=(h_4^{-1},3)$. Notice that, for $i\in\{2,4\}$, the only vertex in $G_3$ with valency $t$ in $[\Delta(1_i)]$ is $1_3$. We deduce that $(1_3)^{\a}=1_3$ and hence
		\[
		\big(\Delta(1_3)\cap\Delta((h_2h_1^{-1},3))\cap\Delta(1_2)\big)^\a=\Delta(1_3)\cap\Delta(h_4^{-1},3)\cap\Delta(1_4).
		\]
		Since $\Delta(1_3)\cap\Delta((h_2h_1^{-1},3))\cap\Delta(1_2)=\{1_1\}$, it follows that $|\Delta(1_3)\cap\Delta(h_4^{-1},3)\cap\Delta(1_4)|=1$. This together with the observation $(h_4^{-1},1)\in\Delta(1_3)\cap\Delta(h_4^{-1},3)\cap\Delta(1_4)$ implies that $1_1\notin\Delta((h_4^{-1},3))$, and so in $[\Delta(1_4)]$ the vertex $(h_4^{-1},3)$ has no neighbor of valency $3$. However, in $[\Delta(1_2)]$, the vertex $1_1$ is a neighbor of $(h_2h_1^{-1},3)$ with valency $3$. This contradiction completes Step~1.
		
		\smallskip
		\textsf{Step 2}: prove $A_{1_1}=1$.
		\smallskip
		
		Since $[G_1\cup G_2\cup G_3]\cong\Gamma_3$ and since Lemma~\ref{lem=4.1} asserts that $\Gamma_3$ is a $3$-HGR of $G$, we conclude that $A_{1_1}$ fixes $G_1\cup G_2\cup G_3$ pointwise.
		Suppose for a contradiction that $A_{1_1}\neq1$. Then there exists $\a\in A_{1_1}$ such that $(g,4)^{\a}=(h,4)$ for some distinct $g$ and $h$ in $G$. It follows that $\a$ maps the neighbors of $(g,4)$ in $G_1$ to the neighbors of $(h,4)$ in $G_1$, that is, $((S^{-1}g)\times\{1\})^{\a}=(S^{-1}h)\times\{1\}$.
		Since $\a$ fixes $G_1$ pointwise, this yields $(S^{-1}g)\times\{1\}=(S^{-1}h)\times\{1\}$ and hence $S^{-1}g=S^{-1}h$. In particular, $g\in S^{-1}h$, which means that $g=h_i^{-1}h$ for some $i\in\{1,\ldots,t\}$ (as $g\neq h$). However, this leads to $S^{-1}h_i^{-1}h=S^{-1}h$, or equivalently, $h_iS=S$, which is impossible. Thus $A_{1_1}=1$, as required.
	\end{proof}
	
	We now prove Theorem~\ref{theo=main} in the case when $m$ is even, which together with the end of Section~\ref{Sec5} completes the proof of Theorem~\ref{theo=main}.
	
	\begin{proof}[Proof of Theorem~$\ref{theo=main}$ for even $m$]
		Propositions~\ref{prop=smallgroups} and~\ref{prop=smallranks} establish that Theorem~\ref{theo=main} holds for $d(G)\leq3$. Assume that $d(G)\geq4$. Then as Lemma~\ref{Lem6.3} asserts, $G$ has a $4$-HGR $\Gamma_4$ and a $4$-PGSR $\Sigma_4$ satisfying the conditions of Lemma~\ref{Lem4PGSR}. The latter, in turn, implies that $G$ has an $m$-HGR for each even $m\geq6$. Hence Theorem~\ref{theo=main} holds for every even $m\geq4$.
	\end{proof}

	\noindent{\bf Acknowledgements:} The first author was supported by the National Natural Science Foundation of China (12101601, 12271024, 12371025) and the National Key R\&D Program of China (211070B62501). The second author was partially supported by the National Natural Science Foundation of China (12331013, 12271024). The fourth author was supported by the National Natural Science Foundation of China (12101070, 12161141005).

\end{document}